\newtheorem{theorem}{Theorem}
\newtheorem{lemma}[theorem]{Lemma}
\theoremstyle{remark}
\theoremstyle{definition}
\theoremstyle{example}
\newcommand{\email}[1]{\href{mailto:#1}{#1}}
\DeclareMathOperator{\card}{card}
\DeclareMathOperator{\diam}{diam}
\DeclareMathOperator{\sign}{sgn}
\newcommand{\Real}{\mathbb{R}}
\newcommand{\st}{\,:\,}
\DeclareMathOperator{\DIV}{div}
\DeclareMathOperator{\VDIV}{\bf div}
\DeclareMathOperator{\CURL}{\bf curl}
\DeclareMathOperator{\GRAD}{\bf grad}
\DeclareMathOperator{\ROT}{rot}
\DeclareMathOperator{\VROT}{\bf rot}
\newcommand{\Hdiv}[1]{\bvec{H}(\DIV;#1)}
\newcommand{\Hcurl}[1]{\bvec{H}(\CURL;#1)}
\DeclareMathOperator{\Ker}{Ker}
\newcommand{\sphere}{\mathcal{S}}
\newcommand{\compl}{\mathrm{c}}
\newcommand{\Poly}[2][]{\mathcal{P}_{#1}^{#2}}
\newcommand{\vPoly}[2][]{\cvec{P}_{#1}^{#2}}
\newcommand{\Roly}[1]{\cvec{R}^{#1}}
\newcommand{\cRoly}[1]{\cvec{R}^{\compl,#1}}
\newcommand{\Goly}[1]{\cvec{G}^{#1}}
\newcommand{\cGoly}[1]{\cvec{G}^{\compl, #1}}
\newcommand{\lproj}[2]{\pi_{\mathcal{P},#2}^{#1}}
\newcommand{\Rproj}[2]{\bvec{\pi}_{\cvec{R},#2}^{#1}}
\newcommand{\cRproj}[2]{\bvec{\pi}_{\cvec{R},#2}^{\compl,#1}}
\newcommand{\cXproj}[2]{\bvec{\pi}_{\cvec{X},#2}^{\compl,#1}}
\newcommand{\Gproj}[2]{\bvec{\pi}_{\cvec{G},#2}^{#1}}
\newcommand{\cGproj}[2]{\bvec{\pi}_{\cvec{G},#2}^{\compl,#1}}
\newcommand{\RT}[1]{\mathcal{RT}^{#1}}
\newcommand{\NE}[1]{\mathcal{N}^{#1}}
\newcommand{\RTproj}[2]{\bvec{\pi}_{\mathcal{RT},#2}^{#1}}
\DeclareRobustCommand{\bvec}[1]{\boldsymbol{#1}}
  \renewcommand{\bvec}[1]{#1}%
\newcommand{\uvec}[1]{\underline{\bvec{#1}}}
\newcommand{\cvec}[1]{\bvec{\mathcal{#1}}}
\newcommand{\jump}[2]{\llbracket#2\rrbracket_{#1}}
\newcommand{\normal}{\bvec{n}}
\newcommand{\tangent}{\bvec{t}}
\newcommand{\elements}[1]{\mathcal{T}_{#1}}
\newcommand{\faces}[1]{\mathcal{F}_{#1}}
\newcommand{\edges}[1]{\mathcal{E}_{#1}}
\newcommand{\vertices}[1]{\mathcal{V}_{#1}}
\newcommand{\Mh}{\mathcal{M}_h}
\newcommand{\Th}{\elements{h}}
\newcommand{\Fh}{\faces{h}}
\newcommand{\Eh}{\edges{h}}
\newcommand{\Vh}{\vertices{h}}
\newcommand{\FT}{\faces{T}}
\newcommand{\ET}{\edges{T}}
\newcommand{\VT}{\vertices{T}}
\newcommand{\EF}{\edges{F}}
\newcommand{\VE}{\vertices{E}}
\newcommand{\subelements}[1]{\mathfrak{T}_{#1}}
\newcommand{\subfaces}[1]{\mathfrak{F}_{#1}}
\newcommand{\subedges}[1]{\mathfrak{E}_{#1}}
\newcommand{\subvertices}[1]{\mathfrak{V}_{#1}}
\newcommand{\fMh}{\mathfrak{M}_h}
\newcommand{\fTh}{\subelements{h}}
\newcommand{\fFh}{\subfaces{h}}
\newcommand{\fEh}{\subedges{h}}
\newcommand{\fVh}{\subvertices{h}}
\newcommand{\FS}{\subfaces{S}}
\newcommand{\VS}{\subvertices{S}}
\newcommand{\ST}{\subelements{T}}
\newcommand{\ES}{\subedges{S}}
\newcommand{\nF}{\normal_F}
\newcommand{\wFE}{\omega_{FE}}
\newcommand{\wTF}{\omega_{TF}}
\newcommand{\wSF}{\omega_{SF}}
\newcommand{\wOF}{\omega_{\Omega F}}
\newcommand{\norm}[2]{\|#2\|_{#1}}
\newcommand{\seminorm}[2]{|#2|_{#1}}
\newcommand{\vvvert}{\vert\kern-0.25ex\vert\kern-0.25ex\vert}
\newcommand{\tnorm}[2]{\vvvert #2\vvvert_{#1}}
\newcommand{\trimmed}{{-}}
\newcommand{\Xgrad}[1]{\underline{X}_{\GRAD,#1}^k}
\newcommand{\Xcurl}[1]{\uvec{X}_{\CURL,#1}^k}
\newcommand{\Xdiv}[1]{\uvec{X}_{\DIV,#1}^k}
\newcommand{\HXdiv}{\uvec{\mathfrak{H}}_{\DIV,h}^k}
\newcommand{\uGh}{\uvec{G}_h^k}
\newcommand{\GT}{\bvec{G}_T^k}
\newcommand{\GF}{\bvec{G}_F^k}
\newcommand{\GE}{G_E^k}
\newcommand{\uCh}{\uvec{C}_h^k}
\newcommand{\uCT}{\uvec{C}_T^k}
\newcommand{\CF}{C_F^k}
\newcommand{\CT}{\boldsymbol{C}_T^k}
\newcommand{\Dh}{D_h^k}
\newcommand{\DT}{D_T^k}
\newcommand{\trF}{\gamma_F^{k+1}}
\newcommand{\trFt}{\bvec{\gamma}_{{\rm t},F}^k}
\begin{document}

\title{Uniform Poincar\'e inequalities for the Discrete de Rham complex on general domains}
\author{Daniele A. Di Pietro}
\author{Marien-Lorenzo Hanot}
\affil{IMAG, Univ Montpellier, CNRS, Montpellier, France, \email{daniele.di-pietro@umontpellier.fr}, \email{marien-lorenzo.hanot@umontpellier.fr}}

\maketitle


\begin{abstract}
  In this paper we prove Poincar\'e inequalities for the Discrete de Rham (DDR) sequence on a general connected polyhedral domain $\Omega$ of $\Real^3$.
  We unify the ideas behind the inequalities for all three operators in the sequence, deriving new proofs for the Poincar\'e inequalities for the gradient and the divergence, and extending the available Poincar\'e inequality for the curl to domains with arbitrary second Betti numbers.
  A key preliminary step consists in deriving ``mimetic'' Poincaré inequalities giving the existence and stability of the solutions to topological balance problems useful in general discrete geometric settings.
  As an example of application, we study the stability of a novel DDR scheme for the magnetostatics problem on domains with general topology.
  \medskip\\
  \textbf{Key words.} Discrete de Rham complex, polytopal methods, Poincaré inequalities
  \medskip\\
  \textbf{MSC2020.} 65N30, 65N99, 14F40
\end{abstract}


\section{Introduction}

Poincaré inequalities are a key tool to prove the well-posedness of many common partial differential equation problems.
Mimicking them at the discrete level is typically required for the stability of numerical approximations.
Poincar\'e inequalities for conforming Finite Element de Rham complexes can be derived through bounded cochain projections 
as described, e.g., in \cite[Chapter~5]{Arnold:18}; see also \cite{Christiansen.Licht:20} for a recent generalisation.
In the context of Virtual Element de Rham complexes \cite{Beirao-da-Veiga.Brezzi.ea:18}, 
similar results typically hinge on non-trivial norm comparison results, 
examples of which can be found in \cite{Beirao-da-Veiga.Dassi.ea:22*1}.
Discrete Poincar\'e-type inequalities in the context of the (non-compatible) Hybrid High-Order methods have been derived, 
e.g., in \cite{Di-Pietro.Droniou:20} (gradient), \cite{Botti.Di-Pietro.ea:17} (symmetric gradient) 
and \cite{Chave.Di-Pietro.ea:22,Lemaire.Pitassi:23} (curl).

The focus of the present work is on the derivation of Poincar\'e inequalities for the Discrete de Rham (DDR) sequence of \cite{Di-Pietro.Droniou:23*1} on domains with general topology.
Unlike Finite and Virtual Elements, DDR formulations are fully discrete, 
with spaces spanned by vectors of polynomials and continuous vector calculus operators replaced by discrete counterparts.
Discrete Poincar\'e inequalities thus require to bound $L^2$-like norms of vectors of polynomials with $L^2$-like norms of suitable discrete operators applied to them.
To establish such bounds, we take inspiration from \cite{Di-Pietro.Droniou.ea:23}, 
where it was noticed that the topological information is fully contained in the lowest-order DDR subsequence, 
and \cite{Di-Pietro.Droniou:21}, where a Poincar\'e inequality for the curl on topologically trivial domains of $\Real^3$ was derived.
The lowest-order DDR sequence is strongly linked to Mimetic Finite Differences and related methods \cite{Brezzi.Lipnikov.ea:05,Brezzi.Buffa.ea:09,Beirao-da-Veiga.Lipnikov.ea:14,Bonelle.Ern:14,Bonelle.Di-Pietro.ea:15,Codecasa.Specogna.ea:10}.
The first step to prove discrete Poincar\'e inequalities in DDR spaces is thus precisely to establish the mimetic counterparts stated in Theorems \ref{thm:Whitney.V}, \ref{thm:Whitney.E}, and \ref{thm:Whitney.F} below.
Their proofs require to work at the global level, with conditions accounting for the topology of the domain appearing for the curl.
The discrete Poincar\'e inequalities for arbitrary-order DDR spaces collected in Section \ref{sec:main.results} below 
are then obtained combining the mimetic Poincar\'e inequalities with local estimates of the higher-order components.

We next briefly discuss the links between the present work and previous results for DDR methods.
Fully general Poincaré inequalities for the gradient and the divergence had already been obtained, 
respectively, in \cite[Theorem 3]{Di-Pietro.Droniou:23*1} and \cite{Di-Pietro.Droniou:21} using different techniques.
The main novelty of the proofs provided here is that they are better suited to generalisations in the framework of the Polytopal Exterior Calculus recently introduced in \cite{Bonaldi.Di-Pietro.ea:23}.
A Poincar\'e inequality for the curl on topologically trivial domains had been obtained in \cite[Theorem 20]{Di-Pietro.Droniou:21}.
The main novelty with respect to this result consists in the extension to domains encapsulating voids.
The interest of the material in this paper is additionally that it contains preliminary 
results to establish discrete Poincar\'e inequalities for advanced complexes, 
such as the three-dimensional discrete div-div complex recently introduced in \cite{Di-Pietro.Hanot:23}.

The rest of the paper is organized as follows.
The definitions of the relevant DDR spaces and operators are briefly recalled in Section \ref{sec:DDRconstruction}.
Mimetic Poincarés inequalities are derived in Section \ref{sec:proof.poincaré}, and then used to prove discrete Poincarés inequalities for the DDR complex in Section \ref{sec:proof.DDR.poincare}.
The latter are used in Section \ref{sec:vectorLaplace} to carry out the stability analysis of a DDR scheme for the magnetostatics problem on domains with general topology.
Some arguments in the proofs of mimetic Poincar\'e inequalities rely on specific shape functions for Finite Element spaces on a submesh, whose definitions and properties are summarised in Appendix \ref{sec:simplicial.de-rham}.


\section{Discrete de Rham construction} \label{sec:DDRconstruction}

\subsection{Domain and mesh}\label{sec:setting:domain.mesh}

Let $\Omega\subset\Real^3$ denote a connected polyhedral domain.
We consider a polyhedral mesh $\Mh\coloneq\Th\cup\Fh\cup\Eh\cup\Vh$, where $\Th$ gathers the elements, $\Fh$ the faces, $\Eh$ the edges, and $\Vh$ the vertices.
For all $Y\in\Mh$, we denote by $h_Y$ its diameter and set $h\coloneq\max_{T\in\Th}h_T$.
For each face $F\in\Fh$, we fix a unit normal $\normal_F$ to $F$ and, for each edge $E\in\Eh$, a unit tangent $\tangent_E$. For $T\in\Th$, $\FT$ gathers the faces on the boundary $\partial T$ of $T$ and $\ET$ the edges in $\partial T$; if $F\in\Fh$, $\EF$ is the set of edges contained in the boundary $\partial F$ of $F$.
For $F\in\FT$, $\omega_{TF}\in\{-1,+1\}$ is such that $\omega_{TF}\normal_F$ is the outer normal on $F$ to $T$.

Each face $F\in\Fh$ is oriented counter-clockwise with respect to $\normal_F$ and, for $E\in\EF$, we let $\omega_{FE}\in\{-1,+1\}$ be such that $\omega_{FE}=+1$ if $\tangent_E$ points along the boundary $\partial F$ of $F$ in the clockwise sense, and $\omega_{FE}=-1$ otherwise; we also denote by $\normal_{FE}$ the unit normal vector to $E$, in the plane spanned by $F$, such that  $\omega_{FE}\normal_{FE}$ points outside $F$.
We denote by $\GRAD_F$ and $\DIV_F$ the tangent gradient and divergence operators acting on smooth enough functions.
Moreover, for any $r:F\to\Real$ and $\bvec{z}:F\to\Real^2$ smooth enough, we let
$\VROT_F r\coloneq (\GRAD_F r)^\perp$ and
$\ROT_F\bvec{z}=\DIV_F(\bvec{z}^\perp)$,
with $\perp$ denoting the rotation of angle $-\frac\pi2$ in the oriented tangent space to $F$.

We further assume that $(\Th,\Fh)$ belongs to a regular mesh sequence in the sense of \cite[Definition 1.9]{Di-Pietro.Droniou:20}, with mesh regularity parameter $\varrho>0$.
This implies that, for each $Y\in\Th\cup\Fh\cup\Eh$, there exists a point $\bvec{x}_{Y}\in Y$ such that the ball centered at $\bvec{x}_Y$ and of radius $\varrho h_Y$ is contained in $Y$.

Throughout the paper, $a\lesssim b$ (resp., $a\gtrsim b$) stands for $a\le Cb$ (resp., $a\ge Cb$) with $C$ depending only on $\Omega$, the mesh regularity parameter and, when polynomial functions are involved, the corresponding polynomial degree.
We also write $a \simeq b$ when both $a \lesssim b$ and $b \lesssim a$ hold.

\subsection{Polynomial spaces and $L^2$-orthogonal projectors}

For any $Y\in\Mh$ and an integer $\ell\ge 0$, we denote by $\Poly{\ell}(Y)$ the space spanned by the restriction to $Y$ of polynomial functions of the space variables.
Let, for $Y \in \Th \cup \Fh$, $\vPoly{\ell}(Y)\coloneq\Poly{\ell}(Y)^n$ with $n$ denoting the dimension of $Y$.
We have the following direct decompositions:
For all $F\in\Fh$, 
\[
\text{%
  $\vPoly{\ell}(F) = \Roly{\ell}(F) \oplus \cRoly{\ell}(F)$
  with $\Roly{\ell}(F)\coloneq\VROT_F\Poly{\ell+1}(F)$
  and $\cRoly{\ell}(F)\coloneq(\bvec{x}-\bvec{x}_F)\Poly{\ell-1}(F)$
}
\]
and, for all $T\in\Th$,
\[
\begin{alignedat}{4}
  \vPoly{\ell}(T)
  &= \Goly{\ell}(T) \oplus \cGoly{\ell}(T)
  &\enspace&
  \text{%
    with $\Goly{\ell}(T)\coloneq\GRAD\Poly{\ell+1}(T)$
    and $\cGoly{\ell}(T)\coloneq(\bvec{x}-\bvec{x}_T)\times \vPoly{\ell-1}(T)$%
  } 
  \\ \label{eq:vPoly=Roly+cRoly}
  &= \Roly{\ell}(T) \oplus \cRoly{\ell}(T)
  &\enspace&
  \text{%
    with $\Roly{\ell}(T)\coloneq\CURL\Poly{\ell+1}(T)$
    and $\cRoly{\ell}(T)\coloneq(\bvec{x}-\bvec{x}_T)\Poly{\ell-1}(T)$.%
  }
\end{alignedat}
\]
We extend the above notations to negative exponents $\ell$ by setting all the spaces appearing in the decompositions equal to the trivial vector space $\{\bvec{0}\}$.
Given a polynomial (sub)space $\mathcal{X}^\ell(Y)$ on $Y\in\Mh$, the corresponding $L^2$-orthogonal projector is denoted by $\pi_{\mathcal{X},Y}^\ell$.
Boldface font will be used when the elements of $\mathcal{X}^\ell(Y)$ are vector-valued, and $\cXproj{\ell}{Y}$ will denote the $L^2$-orthogonal projector on $\cvec{X}^{{\rm c},\ell}(Y)$.

\subsection{DDR spaces}\label{sec:DDRconstruction:spaces}

The discrete counterparts of the spaces appearing in the continuous de Rham complex are defined as follows:

\[
\Xgrad{h}\coloneq\Big\{
\begin{aligned}[t]
  \underline{q}_h
  &=\big((q_T)_{T\in\Th},(q_F)_{F\in\Fh}, (q_E)_{E\in\Eh}, (q_V)_{V\in\Vh}\big)\st
  \\
  &\qquad
  \text{$q_T\in \Poly{k-1}(T)$ for all $T\in\Th$,
    $q_F\in\Poly{k-1}(F)$ for all $F\in\Fh$,}
  \\
  &\qquad
  \text{$q_E\in\Poly{k-1}(E)$ for all $E\in\Eh$,      
    and $q_V \in \Real$ for all $V\in\Vh$}
  \Big\},
\end{aligned}
\]
\[
  \Xcurl{h}\coloneq\Big\{
  \begin{aligned}[t]
    \uvec{v}_h
    &=\big(
    (\bvec{v}_{\cvec{R},T},\bvec{v}_{\cvec{R},T}^\compl)_{T\in\Th},(\bvec{v}_{\cvec{R},F},\bvec{v}_{\cvec{R},F}^\compl)_{F\in\Fh}, (v_E)_{E\in\Eh}
    \big)\st
    \\
    &\qquad\text{$\bvec{v}_{\cvec{R},T}\in\Roly{k-1}(T)$ and $\bvec{v}_{\cvec{R},T}^\compl\in\cRoly{k}(T)$ for all $T\in\Th$,}
    \\
    &\qquad\text{$\bvec{v}_{\cvec{R},F}\in\Roly{k-1}(F)$ and $\bvec{v}_{\cvec{R},F}^\compl\in\cRoly{k}(F)$ for all $F\in\Fh$,}
    \\
    &\qquad\text{and $v_E\in\Poly{k}(E)$ for all $E\in\Eh$}\Big\},
  \end{aligned}
\]
\[
  \Xdiv{h}\coloneq\Big\{
  \begin{aligned}[t]
    \uvec{w}_h
    &=\big((\bvec{w}_{\cvec{G},T},\bvec{w}_{\cvec{G},T}^\compl)_{T\in\Th}, (w_F)_{F\in\Fh}\big)\st
    \\
    &\qquad\text{$\bvec{w}_{\cvec{G},T}\in\Goly{k-1}(T)$ and $\bvec{w}_{\cvec{G},T}^\compl\in\cGoly{k}(T)$ for all $T\in\Th$,}
    \\
    &\qquad\text{and $w_F\in\Poly{k}(F)$ for all $F\in\Fh$}
    \Big\},
  \end{aligned}
\]
and
\[
\Poly{k}(\Th)\coloneq\left\{
q_h\in L^2(\Omega)\st\text{$(q_h)_{|T}\in\Poly{k}(T)$ for all $T\in\Th$}
\right\}.
\]

\subsection{Local vector calculus operators and potentials}

\subsubsection{Gradient}

For any $E\in\Eh$, the edge gradient $\GE:\Xgrad{E}\to\Poly{k}(E)$ is such that, for all $\underline{q}_E\in\Xgrad{E}$,
\begin{equation}\label{eq:GE}
  \int_E \GE \underline{q}_E\, r
  = -\int_E q_E\, r' + \jump{E}{q_V\, r},
\end{equation}
with derivative taken in the direction of $\tangent_E$ and with
$\jump{E}{\cdot}$ denoting the difference between vertex values on an edge such that, for any function $\phi\in C^0(\overline{E})$ and any family $\{w_{V_1},w_{V_2}\}$ of vertex values such that $\tangent_E$ points from $V_1$ to $V_2$,
\[
\jump{E}{w_V\phi}\coloneq w_{V_2} \phi(\bvec{x}_{V_2}) - w_{V_1} \phi(\bvec{x}_{V_1}).
\]

For any $F\in\Fh$, the face gradient $\GF:\Xgrad{F}\to\vPoly{k}(F)$ and the scalar trace $\trF:\Xgrad{F}\to\Poly{k+1}(F)$ are such that, for all $\underline{q}_F\in\Xgrad{F}$,
\begin{alignat}{4}\label{eq:GF}
  \int_F\GF\underline{q}_F\cdot\bvec{v}
  &= -\int_F q_F\DIV_F\bvec{v}
  + \sum_{E\in\EF}\omega_{FE}\int_E q_E~(\bvec{v}\cdot\normal_{FE})
  &\quad&\forall\bvec{v}\in\vPoly{k}(F),
  \\ \nonumber
  \int_F\trF\underline{q}_F\DIV_F\bvec{v}
  &= -\int_F\GF\underline{q}_F\cdot\bvec{v}
  + \sum_{E\in\EF}\omega_{FE}\int_E q_E~(\bvec{v}\cdot\normal_{FE})
  &\quad&\forall\bvec{v}\in\cRoly{k+2}(F).
\end{alignat}

Similarly, for all $T\in\Th$, the element gradient $\GT:\Xgrad{T}\to\vPoly{k}(T)$ is defined such that, for all $\underline{q}_T\in\Xgrad{T}$,
\begin{equation}\label{eq:GT}
  \int_T\GT\underline{q}_T\cdot\bvec{v}
  = -\int_T q_T\DIV\bvec{v}
  + \sum_{F\in\FT}\omega_{TF}\int_F\trF\underline{q}_F~(\bvec{v}\cdot\normal_F)
  \qquad\forall\bvec{v}\in\vPoly{k}(T),
\end{equation}

\subsubsection{Curl}

For all $F\in\Fh$, the face curl $\CF:\Xcurl{F}\to\Poly{k}(F)$ and tangential trace $\trFt:\Xcurl{F}\to\vPoly{k}(F)$ are such that, for all $\uvec{v}_F\in\Xcurl{F}$,
\begin{equation}\label{eq:CF}
  \int_F\CF\uvec{v}_F\,r
  = \int_F\bvec{v}_{\cvec{R},F}\cdot\VROT_F r
  - \sum_{E\in\EF}\omega_{FE}\int_E v_E\,r
  \qquad\forall r\in\Poly{k}(F)
\end{equation}
and, for all $(r,\bvec{w})\in\Poly[0]{k+1}(F) \times\cRoly{k}(F)$,
\begin{equation*}
  \int_F\trFt\uvec{v}_F\cdot(\VROT_F r + \bvec{w})
  = \int_F\CF\uvec{v}_F\,r
  + \sum_{E\in\EF}\omega_{FE}\int_E v_E\,r
  + \int_F\bvec{v}_{\cvec{R},F}^\compl\cdot\bvec{w}.
\end{equation*}

For all $T\in\Th$, the element curl $\CT:\Xcurl{T}\to\vPoly{k}(T)$ is defined such that, for all $\uvec{v}_T\in\Xcurl{T}$,
\begin{equation}\label{eq:CT}
  \int_T\CT\uvec{v}_T\cdot\bvec{w}
  = \int_T\bvec{v}_{\cvec{R},T}\cdot\CURL\bvec{w}
  + \sum_{F\in\FT}\omega_{TF}\int_F\trFt\uvec{v}_F\cdot(\bvec{w}\times\normal_F)
  \qquad\forall\bvec{w}\in\vPoly{k}(T).
\end{equation}

\subsubsection{Divergence}

For all $T\in\Th$, the element divergence $\DT:\Xdiv{T}\to\Poly{k}(T)$ is defined by:
For all $\uvec{w}_T\in\Xdiv{T}$,
\begin{equation}\label{eq:DT}
  \int_T\DT\uvec{w}_T\,q
  = -\int_T\bvec{w}_{\cvec{G},T}\cdot\GRAD q
  + \sum_{F\in\FT}\omega_{TF}\int_F w_F\,q
  \qquad\forall q\in\Poly{k}(T).
\end{equation}

\subsection{DDR complex}\label{sec:ddr.complex}

The DDR complex reads:
\[
\begin{tikzcd}
  0\arrow{r}
  & \Xgrad{h}\arrow{r}{\uGh}
  & \Xcurl{h}\arrow{r}{\uCh}
  & \Xdiv{h}\arrow{r}{\Dh}
  & \Poly{k}(\Th)\arrow{r}{0}
  & \{0\},
\end{tikzcd}
\]
where, for all $(\underline{q}_h,\uvec{v}_h,\uvec{w}_h)\in\Xgrad{h}\times\Xcurl{h}\times\Xdiv{h}$,
\begin{align} \label{eq:uGh}
  \uGh\underline{q}_h
  &\coloneq
  \big(
  ( \Rproj{k-1}{T}\GT\underline{q}_T,\cRproj{k}{T}\GT\underline{q}_T )_{T\in\Th},
  ( \Rproj{k-1}{F}\GF\underline{q}_F,\cRproj{k}{F}\GF\underline{q}_F )_{F\in\Fh},
  ( \GE q_E )_{E\in\Eh}
  \big),
  \\ \label{eq:uCh}
  \uCh\uvec{v}_h
  &\coloneq\big(
  ( \Gproj{k-1}{T}\CT\uvec{v}_T,\cGproj{k}{T}\CT\uvec{v}_T )_{T\in\Th},
  ( \CF\uvec{v}_F )_{F\in\Fh}
  \big),
  \\ \nonumber
  (\Dh\uvec{w}_h)_{|T}
  &\coloneq\DT\uvec{w}_T\qquad\forall T\in\Th.
\end{align}

\subsection{Component norms}

We endow the discrete spaces defined in Section \ref{sec:DDRconstruction:spaces} with the $L^2$-like norms defined as follows:
For all $(\underline{q}_h,\uvec{v}_h,\uvec{w}_h)\in\Xgrad{h}\times\Xcurl{h}\times\Xdiv{h}$,
\[
\begin{alignedat}{4}
  \tnorm{\GRAD,h}{\underline{q}_h}^2
  &\coloneq \sum_{T\in\Th} \tnorm{\GRAD,T}{\underline{q}_T}^2\text{ with}
  \\ 
  \tnorm{\GRAD,T}{\underline{q}_T}^2
  &\coloneq \norm{L^2(T)}{q_T}^2
  + h_T \sum_{F\in\Fh} \tnorm{F}{\underline{q}_F}^2
  &\qquad& \forall T \in \Th,
  \\ 
  \tnorm{\GRAD,F}{\underline{q}_F}^2
  &\coloneq \norm{L^2(F)}{q_F}^2  
  + h_F \sum_{E\in\Eh} \tnorm{\GRAD,E}{\underline{q}_E}^2
  &\qquad& \forall F \in \Fh,
  \\ 
  \tnorm{\GRAD,E}{q_E}^2
  &\coloneq \norm{L^2(E)}{q_E}^2 + h_E \sum_{V\in\VE} |q_V|^2
  &\qquad& \forall E \in \Eh,
\end{alignedat}
\]
\begin{equation} \label{eq:tnorm.curl.h}
  \begin{alignedat}{4} 
    \tnorm{\CURL,h}{\uvec{v}_h}^2
    &\coloneq \sum_{T\in\Th} \tnorm{\CURL,T}{\uvec{v}_T}^2\text{ with}
    \\
    \tnorm{\CURL,T}{\uvec{v}_T}^2
    &\coloneq
    \norm{\bvec{L}^2(T;\Real^3)}{\bvec{v}_{\cvec{R},T}}^2
    + \norm{\bvec{L}^2(T;\Real^3)}{\bvec{v}_{\cvec{R},T}^\compl}^2
    + h_T \sum_{F\in\FT} \tnorm{\CURL,F}{\uvec{v}_F}^2
    &\qquad& \forall T \in \Th,
    \\
    \tnorm{\CURL,F}{\uvec{v}_F}^2
    &\coloneq
    \norm{\bvec{L}^2(F;\Real^2)}{\bvec{v}_{\cvec{R},F}}^2
    + \norm{\bvec{L}^2(F;\Real^2)}{\bvec{v}_{\cvec{R},F}^\compl}^2
    + h_F \sum_{E\in\EF} \norm{L^2(E)}{v_E}^2
    &\qquad& \forall F \in \Fh,
  \end{alignedat}
\end{equation}
and
\begin{equation} \label{eq:tnorm.div.h}
  \begin{aligned} 
    \tnorm{\DIV,h}{\uvec{w}_h}^2
    &\coloneq \sum_{T\in\Th} \tnorm{\DIV,T}{\uvec{w}_T}^2\text{ with}
    \\
    \tnorm{\DIV,T}{\uvec{w}_T}^2
    &\coloneq
    \norm{\bvec{L}^2(T;\Real^3)}{\bvec{w}_{\cvec{G},T}}^2
    + \norm{\bvec{L}^2(T;\Real^3)}{\bvec{w}_{\cvec{G},T}^\compl}^2
    + h_T \sum_{F\in\FT} \norm{L^2(F)}{v_F}^2
    \qquad \forall T \in \Th.
  \end{aligned}
\end{equation}

\subsection{Main results}\label{sec:main.results}

\begin{theorem}[Poincar\'e inequality for the gradient]\label{thm:uGh.poincare}
  For all $\underline{p}_h \in \Xgrad{h}$, it holds
  \[ 
    \inf_{\underline{r}_h \in \Ker \uGh} \tnorm{\GRAD,h}{\underline{p}_h - \underline{r}_h}
    \lesssim \tnorm{\CURL,h}{\uGh \underline{p}_h},
    \] 
  with hidden constant only depending on $\Omega$, the mesh regularity parameter, and $k$.
\end{theorem}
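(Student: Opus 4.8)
The plan is to reduce the arbitrary-order inequality to its mimetic (lowest-order) counterpart, Theorem~\ref{thm:Whitney.V}, following the splitting strategy of \cite{Di-Pietro.Droniou.ea:23}. First I would fix $\underline{p}_h \in \Xgrad{h}$ and introduce the lowest-order interpolate $\underline{\pi}_h^0 \underline{p}_h$ obtained by applying the relevant $L^2$-orthogonal projectors to each component of $\underline{p}_h$; call the difference $\underline{\widetilde{p}}_h \coloneq \underline{p}_h - \underline{\pi}_h^0\underline{p}_h$, which has vanishing vertex values. The decomposition $\underline{p}_h = \underline{\pi}_h^0\underline{p}_h + \underline{\widetilde p}_h$ is respected, up to local terms, by $\uGh$: the image of the lowest-order part is captured by the mimetic gradient, while the image of $\underline{\widetilde p}_h$ is "local", in the sense that on each element $T$ the components of $\uGh\underline{\widetilde p}_h$ depend only on $\underline{\widetilde p}_T$. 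The choice of $\underline{r}_h$ will then be $\underline{r}_h \coloneq \underline{\pi}_h^0\underline{r}_h^{\rm low}$ where $\underline{r}_h^{\rm low}\in\Ker(\text{mimetic gradient})$ is the quasi-minimiser furnished by Theorem~\ref{thm:Whitney.V}, seen as an element of $\Xgrad{h}$ through the natural (lowest-order) embedding; one checks $\underline{r}_h\in\Ker\uGh$ because its nonzero components are only vertex values and the mimetic gradient vanishes on it.

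The core of the argument is then a two-term estimate
\[
\tnorm{\GRAD,h}{\underline{p}_h - \underline{r}_h}
\lesssim
\tnorm{\GRAD,h}{\underline{\pi}_h^0\underline{p}_h - \underline{r}_h}
+ \tnorm{\GRAD,h}{\underline{\widetilde p}_h},
\]
and I would bound the two contributions separately. For the first term, the component norm $\tnorm{\GRAD,h}{\cdot}$ restricted to lowest-order elements with zero element/face/edge polynomial parts of positive degree is, up to mesh-regularity-dependent constants, equivalent to the discrete $L^2$-type norm used in the mimetic setting of Theorem~\ref{thm:Whitney.V} (this is a local scaling computation: a degree-zero polynomial on $Y$ has $L^2(Y)$-norm $\simeq |Y|^{1/2}$ times its value, and the weights $h_T, h_F, h_E$ reproduce exactly the mimetic weights); hence the mimetic Poincaré inequality applies and controls this term by $\tnorm{\CURL,h}{\uGh\underline{\pi}_h^0\underline{p}_h}$, which in turn is $\lesssim \tnorm{\CURL,h}{\uGh\underline{p}_h} + \tnorm{\CURL,h}{\uGh\underline{\widetilde p}_h}$. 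For the second term, I would use that $\underline{\widetilde p}_h$ has zero vertex values, so on each edge $E$ the defining relation~\eqref{eq:GE} gives $\int_E \GE\widetilde p_E\, r = -\int_E \widetilde p_E\, r'$, and a local Poincaré/inverse inequality on $E$ bounds $\norm{L^2(E)}{\widetilde p_E}$ by $h_E\norm{L^2(E)}{\GE\widetilde p_E}$ (using that $\widetilde p_E$ is a polynomial and has no vertex data to anchor it — here one uses the fact that on $E$ the full gradient $\GE\widetilde p_E$ appears \emph{exactly} in $\uGh$, not a projection). Propagating this up through the face relations~\eqref{eq:GF} and element relations~\eqref{eq:GT}, together with the continuous trace/inverse inequalities valid on regular meshes and the boundedness of the $L^2$-projectors $\Rproj{k-1}{\cdot}$, $\cRproj{k}{\cdot}$, one gets $\tnorm{\GRAD,h}{\underline{\widetilde p}_h} \lesssim \tnorm{\CURL,h}{\uGh\underline{\widetilde p}_h} \lesssim \tnorm{\CURL,h}{\uGh\underline{p}_h}$, the last step using that the map $\underline{p}_h\mapsto\underline{\widetilde p}_h$ is itself bounded on $\uGh$-images because it only modifies components locally and $\underline{\pi}_h^0$ is stable.

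The step I expect to be the main obstacle is making rigorous the claim that the image norm $\tnorm{\CURL,h}{\uGh\underline{\widetilde p}_h}$ controls (rather than merely bounds) the full $\tnorm{\GRAD,h}{\underline{\widetilde p}_h}$, i.e. the local Poincaré estimate for the zero-vertex-value part. The subtlety is that $\uGh$ projects the element and face gradients onto $\cvec{R}$-type subspaces, so a priori one does not see all of $\GF\underline{\widetilde p}_F$ or $\GT\underline{\widetilde p}_T$; one must argue, component by component from the edges inward using the successive definitions~\eqref{eq:GE}--\eqref{eq:GT} and the fact that the trace $\trF$ is determined by $\GF$ and the edge data, that the projected information is nonetheless enough to reconstruct the polynomial degrees of freedom of $\underline{\widetilde p}_h$ with the correct $h$-scaling. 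This is where mesh regularity and the exactness of the lowest-order edge gradient are essential, and it is the part that requires the most care; everything else is bookkeeping with scaled trace and inverse inequalities and the stability of $L^2$-orthogonal projectors.
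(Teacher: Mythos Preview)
Your decomposition has a genuine gap, but not where you expect it: the step you dismiss as routine, $\tnorm{\CURL,h}{\uGh\underline{\widetilde p}_h} \lesssim \tnorm{\CURL,h}{\uGh\underline{p}_h}$, is false for the componentwise projection $\underline{\pi}_h^0$ you describe. That projection does not commute with $\uGh$, and there is no well-defined map $\uGh\underline{p}_h\mapsto\uGh\underline{\widetilde p}_h$ on the image. Concretely, for $k\ge 1$ take $\underline{p}_h$ to be the interpolant of a nonzero constant $M$, so that $p_V=p_E=p_F=p_T=M$ and $\uGh\underline{p}_h=0$. Then $\underline{\widetilde p}_h$ has $\widetilde p_V=0$ but $\widetilde p_E=M$, and \eqref{eq:GE} gives $\int_E\GE\underline{\widetilde p}_E\,r=-M\int_E r'\ne 0$ for nonconstant $r$, so $\uGh\underline{\widetilde p}_h\ne 0$. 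The inequality you need therefore fails with an infinite constant. (The step you flag as the obstacle, bounding $\tnorm{\GRAD,h}{\underline{\widetilde p}_h}$ by $\tnorm{\CURL,h}{\uGh\underline{\widetilde p}_h}$, can in fact be carried out essentially as you sketch; the projections onto $\cvec{R}$-subspaces are not the problem.)

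The paper sidesteps this by not splitting $\underline{p}_h$. It constructs directly a preimage $\underline{q}_h\in\Xgrad{h}$ with $\uGh\underline{q}_h=\uGh\underline{p}_h$ and $\tnorm{\GRAD,h}{\underline{q}_h}\lesssim\tnorm{\CURL,h}{\uGh\underline{p}_h}$, and takes $\underline{r}_h=\underline{p}_h-\underline{q}_h$. The vertex values are fixed (up to a global constant) by $\jump{E}{q_V}=\int_E\GE\underline{p}_E$, which is information read directly off $\uGh\underline{p}_h$; their norm is then controlled by Theorem~\ref{thm:Whitney.V}. The components $q_E,q_F,q_T$ are defined by exactly the integration-by-parts relations you propose for $\widetilde p_E,\widetilde p_F,\widetilde p_T$, and their estimates proceed just as in your sketch. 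The crucial difference is that every piece of $\underline{q}_h$ is expressed in terms of $\uGh\underline{p}_h$ from the outset, so no comparison between $\uGh\underline{\widetilde p}_h$ and $\uGh\underline{p}_h$ is ever required. Your argument would be repaired by replacing the componentwise $\underline{\pi}_h^0$ with a reduction map that commutes with the discrete differentials, but at that point you are essentially reconstructing the paper's $\underline{q}_h$.
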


\begin{proof}
  See Section \ref{sec:uGh.poincare}.
\end{proof}

\begin{theorem}[Poincar\'e inequality for the curl]\label{thm:uCh.poincare}
  For all $\uvec{v}_h \in \Xcurl{h}$, it holds
  \[ 
    \inf_{\uvec{z}_h \in \Ker \uCh} \tnorm{\CURL,h}{\uvec{v}_h - \uvec{z}_h}
    \lesssim \tnorm{\DIV,h}{\uCh \uvec{v}_h},
  \] 
  with hidden constant only depending on $\Omega$, the mesh regularity parameter, and $k$.
\end{theorem}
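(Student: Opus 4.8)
The plan is to reproduce, at the discrete level, the classical argument for the continuous curl Poincar\'e inequality: construct a \emph{stable discrete curl potential}. It suffices to exhibit, for a given $\uvec{v}_h\in\Xcurl{h}$, some $\uvec{v}_h'\in\Xcurl{h}$ with $\uCh\uvec{v}_h'=\uCh\uvec{v}_h$ and $\tnorm{\CURL,h}{\uvec{v}_h'}\lesssim\tnorm{\DIV,h}{\uCh\uvec{v}_h}$; then $\uvec{z}_h\coloneq\uvec{v}_h-\uvec{v}_h'$ belongs to $\Ker\uCh$ and $\tnorm{\CURL,h}{\uvec{v}_h-\uvec{z}_h}=\tnorm{\CURL,h}{\uvec{v}_h'}\lesssim\tnorm{\DIV,h}{\uCh\uvec{v}_h}$, which is the assertion. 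Write $\uvec{w}_h\coloneq\uCh\uvec{v}_h$; because the DDR sequence is a complex, $\Dh\uvec{w}_h=0$, and $\uvec{w}_h\in\Image\uCh$ by construction. These two facts are what make the obstructions to the existence of a stable potential vanish.

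Following the splitting announced in the introduction, I would build $\uvec{v}_h'$ in two stages. In the first, \emph{local}, stage, I exploit that on every element $T\in\Th$ and every face $F\in\Fh$ the polynomial de Rham complexes entering the DDR construction are exact, with inverses provided by explicit Koszul-type homotopy operators. Combining these with polynomial inverse and trace inequalities on the simplices of the submesh (see Appendix~\ref{sec:simplicial.de-rham}) and the mesh-regularity scalings, one peels off from $\uvec{w}_h$ the part that is locally a discrete curl, thereby producing a contribution to $\uvec{v}_h'$ whose higher-order components are bounded by $\tnorm{\DIV,h}{\uvec{w}_h}$; after fixing the gradient ambiguities of the local lifts so that the result is single-valued on faces and edges, one is left with a \emph{lowest-order} residual, still annihilated by the discrete divergence and still of norm $\lesssim\tnorm{\DIV,h}{\uvec{w}_h}$. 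In the second, \emph{global}, stage, this residual is handled by the mimetic Poincar\'e inequality for the curl, Theorem~\ref{thm:Whitney.E}, which furnishes a stable lowest-order curl potential provided the residual meets the compatibility conditions encoding the second Betti number of $\Omega$ (vanishing of the fluxes through representatives of $H_2$). Those conditions do hold: the residual is cohomologous, within the DDR complex, to $\uvec{w}_h=\uCh\uvec{v}_h$, so its cohomology class vanishes, and since the topological content of the complex is carried by its lowest-order subsequence~\cite{Di-Pietro.Droniou.ea:23}, this forces the compatibility conditions. Adding the two potentials --- and, if the assembly in the first stage introduces an exact correction, controlling it by the already established Poincar\'e inequality for the gradient, Theorem~\ref{thm:uGh.poincare} --- yields the sought $\uvec{v}_h'$.

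I expect the main obstacle to be the junction between the two stages on a domain with voids: one has to (i) verify that the lowest-order residual genuinely carries no $H_2$-obstruction, so that Theorem~\ref{thm:Whitney.E} is applicable, and (ii) perform the gluing of the local higher-order lifts in a manner that is at once single-valued on the mesh skeleton and stable, i.e.\ with a constant that does not degenerate as $h\to0$. The remaining ingredients --- exactness of the local polynomial complexes, polynomial inverse estimates, and $L^2$-norm equivalences on the submesh --- are routine given the material of Section~\ref{sec:DDRconstruction} and Appendix~\ref{sec:simplicial.de-rham}.
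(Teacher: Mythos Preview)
Your high-level strategy --- construct a bounded discrete curl preimage $\uvec{v}_h'$ with $\uCh\uvec{v}_h'=\uCh\uvec{v}_h$, then take $\uvec{z}_h\coloneq\uvec{v}_h-\uvec{v}_h'$ --- is exactly the paper's, and Theorem~\ref{thm:Whitney.E} is indeed the key global ingredient. The difference lies in the \emph{order} of your two stages, and the paper's order is substantially cleaner.

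The paper goes global first, then local. It applies Theorem~\ref{thm:Whitney.E} directly to the face values $\alpha_F\coloneq\int_F\CF\uvec{v}_F$ of $\uCh\uvec{v}_h$ itself (not to a residual obtained after a local step). The element condition $\sum_{F\in\FT}\wTF\alpha_F=0$ follows from $\Dh\uCh=0$ tested against constants, and the void condition is checked by the elementary telescoping computation
\[
\sum_{F\in\faces{\gamma_i}}\wOF\int_F\CF\uvec{v}_F
= -\sum_{F\in\faces{\gamma_i}}\wOF\sum_{E\in\EF}\wFE\int_E v_E = 0,
\]
since every edge of the closed surface $\gamma_i$ is shared by exactly two faces with opposite orientations; no appeal to the cohomology of the DDR complex is needed. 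This yields edge values $\alpha_E$, from which constant $z_E\in\Poly{0}(E)$ are set. Only then are the higher-order components $\bvec{z}_{\cvec{R},F}$ and $\bvec{z}_{\cvec{R},T}$ defined, by the obvious relations (inverting the isomorphisms $\VROT_F:\Poly[0]{k}(F)\to\Roly{k-1}(F)$ and $\CURL:\cGoly{k}(T)\to\Roly{k-1}(T)$) engineered so that $\CF\uvec{z}_F=\CF\uvec{v}_F$ and $\CT\uvec{z}_T=\CT\uvec{v}_T$; the complementary components $\bvec{z}_{\cvec{R},F}^\compl,\bvec{z}_{\cvec{R},T}^\compl$ are simply set to zero. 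Because the edge values are fixed globally \emph{before} any local construction, there is nothing to glue, no gradient ambiguity to resolve, and Theorem~\ref{thm:uGh.poincare} is never invoked. Your local-then-global order forces you to manufacture single-valued edge data out of a collection of independent local lifts --- precisely the obstacle you identified as the hard part; reversing the order dissolves it entirely.
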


\begin{proof}
  See Section \ref{sec:uCh.poincare}.
\end{proof}

\begin{theorem}[Poincar\'e inequality for the divergence]\label{thm:Dh.poincare}
  For all $\uvec{w}_h \in \Xdiv{h}$, it holds
  \[ 
    \inf_{\uvec{z}_h \in \Ker \Dh} \tnorm{\Dh,h}{\uvec{w}_h - \uvec{z}_h}
    \lesssim \norm{L^2(\Omega)}{\Dh \uvec{w}_h},
  \] 
  with hidden constant only depending on $\Omega$, the mesh regularity parameter, and $k$.
\end{theorem}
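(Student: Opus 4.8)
The plan is to reduce the discrete Poincaré inequality for the divergence to the ``mimetic'' statement of Theorem~\ref{thm:Whitney.F} combined with purely local estimates on the higher-order components, following the same template that is used for the gradient and the curl. Given $\uvec{w}_h\in\Xdiv{h}$, set $q_h\coloneq\Dh\uvec{w}_h\in\Poly{k}(\Th)$; we must construct $\uvec{z}_h\in\Ker\Dh$ with $\tnorm{\DIV,h}{\uvec{w}_h-\uvec{z}_h}\lesssim\norm{L^2(\Omega)}{q_h}$. The key point is that $\Dh$ is surjective onto $\Poly{k}(\Th)$ (this is part of the exactness of the DDR complex, which holds on any connected polyhedral domain since $\Poly{k}(\Th)$ carries no topology), so it suffices to exhibit \emph{one} preimage $\uvec{y}_h\in\Xdiv{h}$ of $q_h$ with $\tnorm{\DIV,h}{\uvec{y}_h}\lesssim\norm{L^2(\Omega)}{q_h}$: then $\uvec{z}_h\coloneq\uvec{w}_h-\uvec{y}_h$ lies in $\Ker\Dh$ and $\tnorm{\DIV,h}{\uvec{w}_h-\uvec{z}_h}=\tnorm{\DIV,h}{\uvec{y}_h}$.

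To build the stable preimage I would split $q_h$ into its piecewise-constant part and the rest. Write $q_h=\overline q_h+\widetilde q_h$ where $\overline q_h\in\Poly{0}(\Th)$ has value $\fint_T q_h$ on each $T$ and $\widetilde q_h\coloneq q_h-\overline q_h$. For the mean-zero-on-each-cell part $\widetilde q_h$, the defining relation \eqref{eq:DT} shows that it is hit locally: on each $T$ one solves $-\int_T\bvec{w}_{\cvec{G},T}\cdot\GRAD q=\int_T\widetilde q_h\,q$ for all $q\in\Poly{k}(T)$ with $w_F=0$ on all faces, picking $\bvec{w}_{\cvec{G},T}\in\Goly{k-1}(T)$ (the condition $\int_T q=0$ is exactly what makes the right-hand side compatible with the range of $\GRAD$ on $\Poly{k}(T)$), and setting $\bvec{w}_{\cvec{G},T}^\compl=0$. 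A standard scaling/norm-equivalence argument on the reference element — mesh regularity gives the needed inverse and trace inequalities — yields $\norm{\bvec{L}^2(T;\Real^3)}{\bvec{w}_{\cvec{G},T}}\lesssim h_T\norm{L^2(T)}{\widetilde q_h}$, hence a contribution to $\tnorm{\DIV,h}{\cdot}$ controlled by $\norm{L^2(\Omega)}{\widetilde q_h}\le\norm{L^2(\Omega)}{q_h}$. It remains to hit $\overline q_h$ by a stable element of $\Xdiv{h}$; this is precisely a topological balance problem — choosing face fluxes $(w_F)_{F\in\Fh}$ whose (signed) per-cell flux balances the prescribed constants $\fint_T q_h$ — and its stable solvability on a domain of arbitrary topology is what Theorem~\ref{thm:Whitney.F} provides. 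Concretely, apply Theorem~\ref{thm:Whitney.F} to the piecewise-constant data to obtain face values with $\sum_{F\in\Fh}h_F^{?}\norm{L^2(F)}{w_F}^2\lesssim\norm{L^2(\Omega)}{\overline q_h}^2$ (the exact weights matching \eqref{eq:tnorm.div.h}), take $\bvec{w}_{\cvec{G},T}=\bvec{w}_{\cvec{G},T}^\compl=0$ for that piece, and then correct the resulting per-cell discrepancy (the difference between $\DT$ of these face values and the constant $\fint_T q_h$, which has zero mean on $T$ by construction of the balance) by the local procedure of the previous step — this correction is again mean-zero on each cell and therefore stably liftable into the $\Goly{k-1}(T)$ component as above. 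Summing the two pieces gives $\uvec{y}_h$ with $\Dh\uvec{y}_h=q_h$ and $\tnorm{\DIV,h}{\uvec{y}_h}\lesssim\norm{L^2(\Omega)}{q_h}$.

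The main obstacle is entirely concentrated in the global step: the local liftings are routine reference-element estimates, but producing face fluxes that balance arbitrary cell-wise constants with a norm bound \emph{uniform in the mesh} requires the graph/cochain-level argument encapsulated in Theorem~\ref{thm:Whitney.F}, and one must be careful that the weights $h_F$ appearing there are compatible with those in \eqref{eq:tnorm.div.h} and that the constant depends only on $\Omega$ and $\varrho$ — in particular not on the number of cells. A secondary technical point is bookkeeping the two-stage correction so that no $h$-powers are lost: one should verify that $\norm{L^2(T)}{\DT(\text{face part})-\fint_Tq_h}$ is controlled by $h_T^{-1}$ times (a local piece of) the $\tnorm{\DIV,h}{\cdot}$-norm of the face part, which is where the trace inequality $\norm{L^2(F)}{\cdot}\lesssim h_T^{-1/2}\norm{L^2(T)}{\cdot}$ (valid under mesh regularity) enters, so that the subsequent mean-zero lifting absorbs it with the right power. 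Once these are in place, the inequality follows by the triangle inequality as indicated.
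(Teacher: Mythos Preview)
Your proposal is correct and follows essentially the same approach as the paper: construct a stable preimage of $\Dh\uvec{w}_h$ by using Theorem~\ref{thm:Whitney.F} for the (constant) face components and a local lifting into $\Goly{k-1}(T)$ for the cell components, with $\bvec{z}_{\cvec{G},T}^\compl=\bvec{0}$. The paper is slightly more direct in that it skips your splitting $q_h=\overline q_h+\widetilde q_h$: after applying Theorem~\ref{thm:Whitney.F} to $\alpha_T=\int_T\DT\uvec{w}_T$ to get the face values $z_F$, it defines $\bvec{z}_{\cvec{G},T}$ in a single step via $\int_T\bvec{z}_{\cvec{G},T}\cdot\GRAD q=-\int_T\DT\uvec{w}_T\,q+\sum_{F\in\FT}\omega_{TF}\int_F z_F\,q$ for all $q\in\Poly[0]{k}(T)$, which absorbs both your $\widetilde q_h$-lifting and your correction stage at once.
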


\begin{proof}
  See Section \ref{sec:Dh.poincare}.
\end{proof}


\section{Mimetic Poincar\'e inequalities} \label{sec:proof.poincaré}

  This section contains Poincar\'e inequalities in mimetic spaces that are instrumental in proving the main results stated in the previous section.
  Their proofs rely on the use of a tetrahedral submesh 
  $\fMh = \fTh \cup \fFh \cup \fEh \cup \fVh$ in the sense of \cite[Definition 1.8]{Di-Pietro.Droniou:20}, 
  with $\fTh$ collecting the tetrahedral subelements and $\fFh$, $\fEh$, and $\fVh$ 
  their faces, edges, and vertices, respectively.
We assume, for the sake of simplicity, that this submesh can be obtained adding as new vertices only centers of the faces and elements of $\Mh$.
As a result of the assumptions in \cite[Definition 1.8]{Di-Pietro.Droniou:20}, the regularity parameter of the submesh only depends on that of $\Mh$ and, for a given element $T \in \Th$, the diameters of the submesh entities contained in $\overline{T}$ are comparable to $h_T$ uniformly in $h$.

\subsection{Mimetic Poincar\'e inequality for collections of vertex values}

\begin{theorem}[Mimetic Poincar\'e inequality for collections of vertex values] \label{thm:Whitney.V}
  Let $(\alpha_V)_{V\in\Vh}\in\Real^{\Vh}$ be a collection of values at vertices.
  Then, there is  $C \in \Real$ such that 
  \begin{equation}\label{eq:Whitney.V:poincare}
    \sum_{T\in\Th} h_T^3 \sum_{V\in\VT} (\alpha_V - C)^2 
    \lesssim \sum_{T\in\Th} h_T \sum_{E\in\ET} \vert \jump{E}{\alpha_V} \vert^2,
  \end{equation}
    with hidden constant only depending on $\Omega$ and the mesh regularity parameter.%
  
\end{theorem}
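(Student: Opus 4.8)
The plan is to interpret the vertex values $(\alpha_V)_{V\in\Vh}$ as a discrete $0$-cochain and to control it by its discrete differential (the edge jumps) using a Poincaré inequality on the \emph{continuous} $H^1$ space of the submesh $\fMh$. Concretely, I would introduce the piecewise-affine Lagrange interpolant $\alpha_h \in H^1(\Omega)$ on the tetrahedral submesh $\fTh$ determined by the nodal values $\alpha_V$ at the vertices $V \in \Vh$ (extended to the extra submesh vertices --- centers of faces and elements of $\Mh$ --- by any fixed affine rule, e.g.\ by averaging the values of the $\Mh$-vertices of the corresponding entity). This is exactly the kind of simplicial finite element construction summarised in Appendix \ref{sec:simplicial.de-rham}, so I will quote from there. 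The continuous Poincaré--Wirtinger inequality on the connected domain $\Omega$ then gives $\norm{L^2(\Omega)}{\alpha_h - \overline{\alpha_h}} \lesssim \norm{L^2(\Omega)}{\GRAD \alpha_h}$, where $\overline{\alpha_h}$ is the mean of $\alpha_h$ over $\Omega$; I will take $C \coloneq \overline{\alpha_h}$.

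The two remaining ingredients are scaling estimates relating the continuous quantities to the discrete sums. For the left-hand side: on each tetrahedron of $\fTh$ contained in $\overline T$, mesh regularity gives that the submesh diameters are $\simeq h_T$, so standard equivalence of norms on finite-dimensional spaces (scaled to the tetrahedron) yields $\norm{L^2(\tau)}{\alpha_h - C}^2 \simeq h_\tau^3 \sum (\alpha_V - C)^2$ over the vertices $V$ of $\tau$; summing over $\tau \in \fTh$ and regrouping by $T$, and using that the number of submesh tetrahedra and submesh vertices per element is bounded in terms of $\varrho$, gives $\sum_{T}h_T^3\sum_{V\in\VT}(\alpha_V-C)^2 \lesssim \norm{L^2(\Omega)}{\alpha_h-C}^2$. (Here I must be a little careful that every vertex $V\in\VT$ is indeed a vertex of at least one subtetrahedron inside $\overline T$, which holds by construction of the submesh.) For the right-hand side: on a subtetrahedron $\tau$, $\GRAD\alpha_h$ is constant and is a linear combination of the nodal differences along the edges of $\tau$, so $\norm{L^2(\tau)}{\GRAD\alpha_h}^2 \lesssim h_\tau \sum |\text{edge differences of }\alpha_h|^2$; the edges of $\tau$ are either edges of $\Eh$ (on which the difference is exactly $\jump{E}{\alpha_V}$, up to sign) or internal submesh edges, whose nodal differences are --- again by the fixed affine extension rule --- bounded by finite combinations of the $\jump{E}{\alpha_V}$ for the $E\in\Eh$ adjacent to the enclosing $F$ or $T$. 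Summing and regrouping gives $\norm{L^2(\Omega)}{\GRAD\alpha_h}^2 \lesssim \sum_T h_T\sum_{E\in\ET}\vert\jump{E}{\alpha_V}\vert^2$. Chaining the three displays proves \eqref{eq:Whitney.V:poincare}.

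The main obstacle I anticipate is bookkeeping around the auxiliary submesh vertices: one has to choose the affine extension of $\alpha_h$ to face- and element-centers in a way that (i) keeps $\alpha_h$ globally continuous (so $\alpha_h\in H^1(\Omega)$) and (ii) lets the internal edge differences be estimated \emph{locally} by the $\Mh$-edge jumps $\jump{E}{\alpha_V}$ with constants depending only on $\varrho$. Averaging the vertex values of the parent entity does both: continuity across a face $F$ is automatic because both elements sharing $F$ see the same averaged value at $\bvec x_F$, and the difference between $\alpha_h(\bvec x_F)$ and any $\alpha_V$, $V\in\VE\subset\EF$, telescopes through the $\jump{E}{\alpha_V}$ around $\partial F$ (whose cardinality is $\lesssim 1$). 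A secondary technical point is to make the norm-equivalence constants genuinely uniform: this follows from the usual Piola/affine-scaling argument to a fixed reference simplex combined with the fact, recalled after \cite[Definition 1.8]{Di-Pietro.Droniou:20}, that the submesh regularity parameter depends only on that of $\Mh$. None of these steps is deep; the proof is essentially a transfer of the classical $H^1$ Poincaré inequality through the submesh, with all constants tracked via mesh regularity.
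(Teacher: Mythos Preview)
Your proposal is correct and follows essentially the same route as the paper: build the piecewise-affine interpolant on the tetrahedral submesh, apply the continuous Poincar\'e--Wirtinger inequality, and translate both sides into the discrete sums via local norm equivalences and the expression of $\GRAD\phi_h$ in terms of edge differences. The only cosmetic difference is the extension rule at the auxiliary submesh vertices: the paper assigns to a face/element center the value at an \emph{arbitrary} vertex of that face/element rather than the average you propose, but either choice yields a globally continuous $H^1$ function and lets the internal edge differences be controlled locally by the $\jump{E}{\alpha_V}$ with constants depending only on $\varrho$.
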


\begin{proof}
  We extend the collection $(\alpha_V)_{V\in\Vh}$ to $\fVh$ setting %
    the values at face/element centers equal to the value taken at an arbitrary vertex of the face/element in question.
  
  For any simplex $S \in \fTh$ and any vertex $V\in\VS$ (with $\VS$ collecting the vertices of $S$), let $\phi_{S,V}$ denote the restriction to $S$ of the piecewise affine ``hat'' function associated with $V$ given by \eqref{eq:S.DR.0}, and let $\phi_h \in H^1(\Omega)$ be the piecewise polynomial function defined by setting
  \begin{equation}\label{eq:Whitney.V:phi.h}
    (\phi_h)_{\vert S} \coloneq \sum_{V\in\VS} (\alpha_V-C) \phi_{S,V}
    \qquad\forall S \in \fTh,
  \end{equation}
  where $C\in\Real$ is chosen so that the zero-average condition $\int_\Omega \phi_h = 0$ is satisfied.
  We next prove the following norm equivalences:
  \begin{align} \label{eq:W0.P1}
    \norm{L^2(\Omega)}{\phi_h}^2
    &\simeq \sum_{T\in\Th} h_T^3 \sum_{V\in\VT} (\alpha_V - C)^2,
    \\ \label{eq:W0.P2}
    \norm{\bvec{L}^2(\Omega; \Real^3)}{\GRAD \phi_h}^2
    &\simeq \sum_{T\in\Th} h_T \sum_{E\in\ET} \vert \jump{E}{\alpha_V} \vert^2.
  \end{align}
  The conclusion follows from the above relations writing
  \[
  \sum_{T\in\Th} h_T^3 \sum_{V\in\VT} (\alpha_V - C)^2
  \overset{\eqref{eq:W0.P1}}\lesssim
  \norm{L^2(\Omega)}{\phi_h}^2
  \lesssim
  \norm{\bvec{L}^2(\Omega;\Real^3)}{\GRAD\phi_h}^2
  \overset{\eqref{eq:W0.P2}}\lesssim
  \sum_{T\in\Th} h_T \sum_{E\in\ET} \vert \jump{E}{\alpha_V} \vert^2,
  \]
  where the second inequality follows from the continuous Poincar\'e--Wirtinger inequality, which holds since $\int_\Omega \phi_h = 0$.
  \smallskip\\
  \underline{(i) \emph{Proof of \eqref{eq:W0.P1}.}}
    For any $T \in \Th$, by regularity of
     the submesh, we have $h_S \simeq h_T$ for all $S \in \ST$ (with $\ST$ collecting the subelements contained in $T$).
    It holds
  \begin{equation} \label{eq:W0.P4}
    \begin{aligned}
      \norm{L^2(\Omega)}{\phi_h}^2
      \overset{\eqref{eq:Whitney.V:phi.h}}
      &=
      \sum_{T\in\Th} \sum_{S\in\ST} \int_S \bigg(\sum_{V\in\VS} (\alpha_V-C) \phi_{S,V}\bigg)^2
      \\
      &\simeq \sum_{T\in\Th} \sum_{S\in\ST} \sum_{V\in\VS} (\alpha_V-C)^2 \norm{L^2(S)}{\phi_{S,V}}^2
      \\
      \overset{\eqref{eq:W.norm.0}}&\simeq \sum_{T\in\Th} h_T^3 \sum_{S\in\ST} \sum_{V\in\VS} (\alpha_V-C)^2
      \\
      &\simeq \sum_{T\in\Th} h_T^3 \sum_{V\in\VT} (\alpha_V - C)^2,
    \end{aligned}
  \end{equation}
  where the second equivalence follows from the fact that $\card(\VS) \lesssim 1$,
    in the third one we have additionally used $h_S \simeq h_T$ for all $T \in \Th$ and all $S \in \ST$,
  and the last one is justified by the choice we made at the beginning for $\alpha_V$, $V \in \fVh \setminus \Vh$.
  This readily gives \eqref{eq:W0.P1}.
  \smallskip\\
  \underline{(ii) \emph{Proof of \eqref{eq:W0.P2}.}}
  The key argument to obtain \eqref{eq:W0.P2} lies in the de Rham theorem:
  Let $(\bvec{\psi}_{S,E})_{E\in\ES}$ (with $\ES$ collecting the edges of $S$) be the basis for the edge N\'ed\'elec space given by \eqref{eq:S.DR.1}.
  Then, summing \eqref{eq:W.diff.0}, we have
  \begin{equation} \label{eq:W0.W1}
    \GRAD \left(\sum_{V\in\VS} \alpha_V \phi_{S,V} \right)
    = \sum_{E\in\ES} \jump{E}{\alpha_V} \bvec{\psi}_{S,E} .
  \end{equation}
  Starting from \eqref{eq:W0.W1} and proceeding in a similar way as in \eqref{eq:W0.P4} with \eqref{eq:W.norm.1} replacing \eqref{eq:W.norm.0}, we have 
  \begin{equation} \label{eq:W0.P5}
    \int_{\Omega} \Vert\GRAD\phi_h\Vert^2 \simeq 
    \sum_{T\in\Th} h_T \sum_{S\in\ST} \sum_{E\in\ES} \vert \jump{E}{\alpha_V} \vert^2,
  \end{equation}
    with, for any $\bvec{v} \in \Real^3,$ $\norm{}{\bvec{v}}$ denoting the Euclidian norm of $\bvec{v}$.
  Now, for any edge $E \in \ES$ of any simplex $S \in \ST$, 
  either $E \in \ET$ %
    or, by the choice made at the beginning of this proof for $\alpha_V$ with $V$ face or element center, $\jump{E}{\alpha_V}$ can be computed as the sum of jumps along the boundary of $T$ (i.e. $\jump{E}{\alpha_V} =  \sum_{E'\in\ET} \omega_{E'E} \jump{E'}{\alpha_V}$ for $\omega_{E'E} \in \lbrace -1, 0, 1 \rbrace$).
  Therefore, 
  \[
  \sum_{S\in\ST} \sum_{E\in\ES} \vert \jump{E}{\alpha_V} \vert^2 \simeq 
  \sum_{E\in\ET} \vert \jump{E}{\alpha_V} \vert^2,
  \]
  and we infer \eqref{eq:W0.P2} from \eqref{eq:W0.P5}.
\end{proof}

\subsection{Mimetic Poincar\'e inequality for collections of edge values}

If the topology of the domain is non-trivial, a suitable condition for each void must be satisfied in order to establish a mimetic Poincar\'e inequality for collections of edge values.
Denote by $b_2$ the second Betti number, i.e., the number of voids encapsulated by $\Omega$.
Let $(\faces{\gamma_i})_{1\leq i \leq b_2}$ denote 
  collections of boundary faces such that $\gamma_i \coloneq \bigcup_{F \in \faces{\gamma_i}} \overline{F}$ is the boundary if the $i$th void.
  We start by proving a necessary and sufficient condition under which a function in the lowest-order Raviart--Thomas--N\'ed\'elec face space on the tetrahedral submesh is the curl of a function in the edge N\'ed\'elec space on the same mesh.

\begin{lemma}[Condition on the cohomology]\label{lem:condition.cohom.div}
    Denote by $\RT{1}(\fTh)$ and $\NE{1}(\fTh)$ lowest-order face and edge finite element spaces on the submesh.
    Then, for all $\bvec{\phi}_h \in \RT{1}(\fTh)$, there exists $\bvec{\chi}_h \in \NE{1}(\fTh)$ such that $\bvec{\phi}_h = \CURL \bvec{\chi}_h$
  if and only if
  \begin{equation}\label{eq:im.curl.NE1}
    \text{%
      $\DIV \bvec{\phi}_h = 0$ and
      $\sum_{F\in\faces{\gamma_i}} \int_F \bvec{\phi}_h \cdot \normal_\Omega = 0$
      for all integer $i$ such that $1 \leq i \leq b_2$,
    }
  \end{equation}
    with $\normal_\Omega$ denoting the unit normal vector to the boundary of $\Omega$ pointing out of $\Omega$.
\end{lemma}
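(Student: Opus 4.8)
The plan is to interpret the statement through the homology of the tetrahedral submesh $\fTh$ and the exactness properties of the lowest-order Finite Element de Rham complex on it. Recall that the Whitney (lowest-order) complex
\[
\NE{1}(\fTh)\xrightarrow{\CURL}\RT{1}(\fTh)\xrightarrow{\DIV}\Poly{0}(\fTh)
\]
computes the de Rham cohomology of $\Omega$ (this is the discrete de Rham theorem, whose simplicial ingredients are collected in Appendix~\ref{sec:simplicial.de-rham}); in particular $\Ker(\DIV)/\Image(\CURL)\cong H^2_{\mathrm{dR}}(\Omega)$, which has dimension $b_2$. So the necessity of $\DIV\bvec{\phi}_h=0$ is trivial (it is $\DIV\CURL=0$), and the content is that, \emph{among divergence-free} $\RT{1}$ fields, the quotient by $\Image(\CURL)$ is detected \emph{exactly} by the $b_2$ fluxes $\bvec{\phi}_h\mapsto\sum_{F\in\faces{\gamma_i}}\int_F\bvec{\phi}_h\cdot\normal_\Omega$. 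Concretely I would argue: (a) these fluxes are well defined on $\Ker(\DIV)$, i.e.\ they only depend on the homology class; (b) they vanish on $\Image(\CURL)$; (c) the induced linear map $H^2_{\mathrm{dR}}(\Omega)\to\Real^{b_2}$ is injective, hence — by the dimension count $\dim H^2_{\mathrm{dR}}(\Omega)=b_2$ — an isomorphism; then $\DIV\bvec{\phi}_h=0$ together with vanishing of all $b_2$ fluxes forces the class of $\bvec{\phi}_h$ to be zero, i.e.\ $\bvec{\phi}_h\in\Image(\CURL)$, which is the claimed sufficiency.

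For (a)–(b): if $\bvec{\phi}_h=\CURL\bvec{\chi}_h$ then on each closed boundary component $\gamma_i$ (which is a closed surface triangulated by $\faces{\gamma_i}$) Stokes' theorem on that surface gives $\sum_{F\in\faces{\gamma_i}}\int_F\CURL\bvec{\chi}_h\cdot\normal_\Omega=\sum_{F}\int_{\partial F}\bvec{\chi}_h\cdot\tangent=0$ because the interior edges cancel in pairs and $\gamma_i$ has no boundary. More robustly, I would observe that $\sum_{F\in\faces{\gamma_i}}\int_F\bvec{\phi}_h\cdot\normal_\Omega$ is precisely the pairing of $\bvec{\phi}_h$, viewed as a $2$-cochain, with the $2$-cycle carried by $\gamma_i$; since $\DIV\bvec{\phi}_h=0$ this pairing factors through homology and depends only on the class $[\gamma_i]\in H_2(\Omega)$, and it annihilates coboundaries, which handles (a) and (b) simultaneously. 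For (c): the classes $[\gamma_i]$, $1\le i\le b_2$, form a basis of $H_2(\Omega;\Real)$ — each void contributes its bounding surface and these are independent — so the matrix $\big(\langle\bvec{\phi}_h^{(j)},[\gamma_i]\rangle\big)$ for a basis $\{\bvec{\phi}_h^{(j)}\}$ of $H^2_{\mathrm{dR}}(\Omega)$ is the (perfect) de Rham pairing matrix and is therefore invertible; equivalently, the only element of $H^2_{\mathrm{dR}}(\Omega)$ pairing to zero with every $[\gamma_i]$ is zero. This is exactly the statement that a closed discrete $2$-form all of whose periods over the $[\gamma_i]$ vanish is exact.

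The step I expect to be the main obstacle — or at least the one needing the most care — is making rigorous the identification of the discrete objects with the topological ones: namely, that $\RT{1}(\fTh)$, $\NE{1}(\fTh)$ with $\CURL$ play the role of simplicial cochains with the coboundary, that the flux functional over $\faces{\gamma_i}$ coincides with evaluation on the fundamental $2$-cycle of the surface $\gamma_i$, and that $\{[\gamma_i]\}$ is a basis of $H_2(\Omega)$. One clean route is to invoke the commuting Whitney interpolation/de Rham map between the simplicial cochain complex of $\fTh$ and the Finite Element complex, which is an isomorphism on cohomology (Appendix~\ref{sec:simplicial.de-rham}); under it the flux over $\faces{\gamma_i}$ becomes integration of the corresponding simplicial $2$-cochain over the simplicial $2$-cycle $\gamma_i$, and the classical fact that the $b_2$ void-boundaries generate $H_2$ of a bounded polyhedral domain closes the argument. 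An alternative, entirely discrete, route avoids cohomology names and instead uses the Poincaré inequality / exactness properties of the submesh complex directly: build from a divergence-free $\bvec{\phi}_h$ a vector potential on a cut domain and track the $b_2$ obstructions, each measured by one of the fluxes; but this duplicates standard material, so I would prefer the cohomological statement, citing the appendix for the discrete de Rham isomorphism and standard algebraic topology for $\dim H_2(\Omega)=b_2$ and for the basis of $H_2$ by void boundaries.
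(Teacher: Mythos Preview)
Your proposal is correct and follows the same overall architecture as the paper (necessity is trivial; for sufficiency, show that the $b_2$ flux functionals detect the $b_2$-dimensional quotient $\Ker\DIV/\Image\CURL$), but the execution of step~(c) is genuinely different. You invoke the perfect de Rham pairing between $H^2_{\mathrm{dR}}(\Omega)$ and $H_2(\Omega;\Real)$, together with the fact that the void-boundary classes $[\gamma_i]$ form a basis of $H_2$, to conclude that the flux map $H^2\to\Real^{b_2}$ is injective. The paper instead produces an \emph{explicit} basis of the discrete harmonic space: it takes the Coulomb-like fields $\bvec{\phi}_i(\bvec{x})=(\bvec{x}-\bvec{x}_i)/\|\bvec{x}-\bvec{x}_i\|^3$ centred at a point $\bvec{x}_i$ inside the $i$th void, checks by the divergence theorem that $L_j(\bvec{\phi}_i)=0$ for $j\neq i$ and $L_i(\bvec{\phi}_i)=4\pi>0$, and then interpolates via $\RTproj{1}{h}$ (which commutes with $\DIV$ and preserves the $L_i$) to obtain $b_2$ linearly independent discrete harmonic forms on which the flux matrix is diagonal with nonzero entries. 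Your route is cleaner and more conceptual, but leans on algebraic-topology facts (perfect pairing, basis of $H_2$ by void boundaries) that must be imported; the paper's route is more elementary and entirely self-contained via a direct computation, at the cost of the explicit construction.
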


\begin{proof}
    To check that \eqref{eq:im.curl.NE1} is necessary, notice that the first condition comes from the identity $\DIV\CURL = 0$,  while the second one follows from Green's theorem along with the fact that the flux of the curl of any function across a closed boundary is zero.

  We next prove that condition \eqref{eq:im.curl.NE1} is sufficient using a counting argument.
  From the de Rham Theorem, we know that that the dimension of the space of harmonic forms is precisely the number of voids $b_2$.
  For all integer $i$ such that $1 \leq i \leq b_2$, we define the linear form $L_i$ such that, for any vector-valued function $\bvec{\phi}$ smooth enough,
  \[
    L_i(\bvec{\phi}) \coloneq -\sum_{F\in\faces{\gamma_i}} \int_F \bvec{\phi} \cdot \normal_\Omega .
  \]
  For any $1 \leq i \leq b_2$, let $\bvec{x}_i \in \Real^3$ be any point inside the $i$th void and consider the function $\bvec{\phi}_i: \Real^3 \ni \bvec{x}\mapsto \frac{\bvec{x}-\bvec{x}_i}{\Vert \bvec{x} - \bvec{x}_i \Vert^3} \in \Real^3$.
  Noticing that $\DIV \bvec{\phi}_i = 0$ and applying the divergence theorem inside the $j$th void, 
  we infer that $L_j(\bvec{\phi}_i) = 0$ if $j \neq i$.
    Let us now show that $L_i(\bvec{\phi}_i) > 0$.
    Let $r > 0$ be the distance between $\bvec{x}_i$ and $\Omega$.
    Denoting by $\sphere_i$ the sphere of radius $\frac{r}2$ centred in $\bvec{x}_i$,
  we have that $\int_{\sphere_i} \bvec{\phi}_i \cdot \frac{(\bvec{x} - \bvec{x}_i)}{\Vert \bvec{x}- \bvec{x}_i \Vert} = \int_{\sphere_i} 4 r^{-2} = 4 \pi > 0$.
  Applying once again the divergence theorem to $\bvec{\phi}_i$ on the 
  volume $\mathcal{V}_i$ enclosed between $\sphere_i$ and $\gamma_i$, we have that
  $0 = \int_{\mathcal{V}_i} \DIV \bvec{\phi}_i = L_i(\bvec{\phi}_i) - \int_{\sphere_i} \bvec{\phi}_i \cdot \frac{(\bvec{x} - \bvec{x}_i)}{\Vert \bvec{x}- \bvec{x}_i \Vert}$. Therefore, 
  \begin{equation}\label{eq:Li.phii>0}
    L_i(\bvec{\phi}_i) = \int_{\sphere_i} \bvec{\phi}_i \cdot \frac{(\bvec{x} - \bvec{x}_i)}{\Vert \bvec{x}- \bvec{x}_i \Vert} > 0.
  \end{equation}
  Denoting by $\RTproj{1}{h}$ the canonical interpolator onto $\cvec{RT}^1(\fTh)$, we know that, for any function $\bvec{\phi}$, $\DIV (\RTproj{1}{h} \bvec{\phi}) = \lproj{h}{0} (\DIV \bvec{\phi})$ and $L_i(\RTproj{1}{h} \bvec{\phi}) = L_i(\bvec{\phi})$ by definition of the interpolator.
  Therefore, for any integer $i$ such that $1 \leq i \leq b_2$, $\RTproj{1}{h} \bvec{\phi}_i$ is a discrete harmonic form and, by a counting argument, the linearly independent family $(\RTproj{1}{h} \bvec{\phi}_i)_{1 \leq i \leq b_2}$ spans the space of discrete harmonic forms.

  Let now $\bvec{\bvec{\phi}}_h$ be such that $\DIV \bvec{\bvec{\phi}}_h = 0$.
  Then, $\bvec{\bvec{\phi}}_h = \CURL \bvec{\chi}_h + \sum_{i = 1}^{b_2} \lambda_i\, \RTproj{1}{h} \bvec{\phi}_i$ for some $\bvec{\chi}_h \in \NE{1}(\fTh)$ and $(\lambda_i)_{1 \leq i \leq b_2}\in \Real^{b_2}$.
  We prove that the condition $L_i(\bvec{\phi}) = 0$ for all $1\le i\le b_2$ is sufficient to ensure that $\bvec{\bvec{\phi}}_h$ is in the range of $\CURL$ by contradiction.
  As a matter of fact, if this were not the case, then there would be $i_0$ such that $\lambda_{i_0} \neq 0$.
  However, by \eqref{eq:Li.phii>0}, this would also imply $L_{i_0}(\bvec{\bvec{\phi}}_h) = \lambda_{i_0} L_{i_0}(\RTproj{1}{h} \bvec{\phi}_{i0}) \neq 0$, which is the sought contradiction.
\end{proof}

\begin{theorem}[Mimetic Poincar\'e inequality for collections of edge values] \label{thm:Whitney.E}
  Let $(\alpha_F)_{F\in\Fh}\in\Real^{\Fh}$ be a collection of values at faces satisfying
  \begin{gather}\label{eq:Whitney.E:condition.1}
    \text{%
      $\sum_{F\in\FT} \omega_{TF} \alpha_F = 0$ for all $T \in \Th$ and
    }
    \\ \nonumber 
    \text{%
      $\sum_{F\in\faces{\gamma_i}} \wOF \alpha_F = 0$
      for all integer $i$ such that $1\le i\le b_2$,
    }
  \end{gather}
  where $\wOF \in \lbrace -1, 1 \rbrace$ is such that $\wOF \nF$ points outside the domain $\Omega$.
  Then, there is a collection $(\alpha_E)_{E\in\Eh}\in\Real^{\Eh}$ of values at edges such that, for all $F\in\Fh$, 
  \begin{equation}\label{eq:Whitney.E:poincare}
    \text{%
      $\sum_{E\in\EF}\omega_{FE}\alpha_E = \alpha_F$
      and $\sum_{T\in\Th} h_T\sum_{E\in\ET} \alpha_E^2
      \lesssim \sum_{T\in\Th} h_T^{-1} \sum_{F\in\FT} \alpha_F^2$
    }
  \end{equation}
  with hidden constant only depending on $\Omega$ and the mesh regularity parameter.
\end{theorem}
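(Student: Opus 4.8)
The plan is to mimic the proof of Theorem~\ref{thm:Whitney.V}, replacing the lowest-order Lagrange space on the submesh by the lowest-order face and edge spaces $\RT{1}(\fTh)$ and $\NE{1}(\fTh)$, and the continuous Poincar\'e--Wirtinger inequality by a Poincar\'e inequality for the curl; the topology will be entirely confined to Lemma~\ref{lem:condition.cohom.div}. \emph{(i) Construction of a divergence-free face field.} Since $\fMh$ is obtained from $\Mh$ by adding only face and element centres, each $F\in\Fh$ is triangulated by coning $\bvec{x}_F$ over $\EF$ (so the edges of $F$ are not split and $\Eh\subset\fEh$), and each $T\in\Th$ by coning $\bvec{x}_T$ over that boundary triangulation. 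I would define $\bvec{\phi}_h\in\RT{1}(\fTh)$ through its face degrees of freedom: for a submesh face $f\subset F$ with $F\in\Fh$, set $\int_f\bvec{\phi}_h\cdot\normal_f\coloneq\pm\frac{|f|}{|F|}\alpha_F$ with the sign making $\normal_f$ and $\normal_F$ coherent, so that the total flux of $\bvec{\phi}_h$ through $F$ equals $\alpha_F$; the remaining, element-interior degrees of freedom are then chosen so that $\DIV\bvec{\phi}_h=0$ on every $S\in\fTh$. This last completion is a purely local (element-by-element) problem whose compatibility condition is exactly $\sum_{F\in\FT}\omega_{TF}\alpha_F=0$, the first condition in \eqref{eq:Whitney.E:condition.1}; its solvability and the stability of the minimal-norm solution follow from the discrete inf-sup condition for the lowest-order $\RT{1}/\Poly{0}$ mixed pair on the submesh of $T$, whose constant depends only on the submesh regularity, hence on $\varrho$. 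Together with the scaling equivalence $\norm{\bvec{L}^2(S;\Real^3)}{\bvec{\psi}}^2\simeq h_S^{-1}\sum_{f\in\FS}(\int_f\bvec{\psi}\cdot\normal_f)^2$ for $\bvec{\psi}\in\RT{1}(S)$ (reference element plus $h_S\simeq h_T$) and $\card(\EF)\lesssim1$, this yields
\[
\norm{\bvec{L}^2(\Omega;\Real^3)}{\bvec{\phi}_h}^2\lesssim\sum_{T\in\Th}h_T^{-1}\sum_{F\in\FT}\alpha_F^2.
\]

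\emph{(ii) Inverting the curl.} By construction $\DIV\bvec{\phi}_h=0$ and, for every void $i$, the outward flux through $\gamma_i$ is $\sum_{F\in\faces{\gamma_i}}\int_F\bvec{\phi}_h\cdot\normal_\Omega=\sum_{F\in\faces{\gamma_i}}\wOF\alpha_F=0$ by the second condition in \eqref{eq:Whitney.E:condition.1}; hence \eqref{eq:im.curl.NE1} holds and Lemma~\ref{lem:condition.cohom.div} provides $\bvec{\chi}_h\in\NE{1}(\fTh)$ with $\CURL\bvec{\chi}_h=\bvec{\phi}_h$. Replacing $\bvec{\chi}_h$ by the element of $\bvec{\chi}_h+(\Ker\CURL\cap\NE{1}(\fTh))$ of minimal $\bvec{L}^2(\Omega;\Real^3)$-norm (which does not alter $\CURL\bvec{\chi}_h$) and invoking the discrete Poincar\'e inequality for the curl on the lowest-order finite element de Rham subcomplex of $\fTh$ --- classical, see, e.g., \cite[Chapter~5]{Arnold:18}, with constant depending only on $\Omega$ and $\varrho$ --- we may further assume $\norm{\bvec{L}^2(\Omega;\Real^3)}{\bvec{\chi}_h}\lesssim\norm{\bvec{L}^2(\Omega;\Real^3)}{\bvec{\phi}_h}$.

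\emph{(iii) Edge values and conclusion.} For $E\in\Eh\subset\fEh$, set $\alpha_E\coloneq\pm\int_E\bvec{\chi}_h\cdot\tangent_E$, the sign fixed by the orientation conventions. Given $F\in\Fh$, applying Stokes' theorem on each sub-triangle $f$ of $F$ and summing (the contributions of interior submesh edges cancel, those along $\EF$ telescope) gives $\sum_{E\in\EF}\omega_{FE}\alpha_E=\sum_{f\subset F}\int_f\CURL\bvec{\chi}_h\cdot\normal_f=\sum_{f\subset F}\int_f\bvec{\phi}_h\cdot\normal_f=\alpha_F$, the first relation in \eqref{eq:Whitney.E:poincare}. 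For the stability bound, the scaling equivalence $\norm{\bvec{L}^2(S;\Real^3)}{\bvec{\chi}_h}^2\simeq h_S\sum_{e\in\ES}(\int_e\bvec{\chi}_h\cdot\tangent_e)^2$ gives $h_T\alpha_E^2\lesssim\norm{\bvec{L}^2(S;\Real^3)}{\bvec{\chi}_h}^2$ for any $S\in\ST$ containing $E$; since $\card(\ET)\lesssim1$ and the submesh has bounded overlap, summing over $E\in\ET$ and then over $T\in\Th$ yields $\sum_{T\in\Th}h_T\sum_{E\in\ET}\alpha_E^2\lesssim\norm{\bvec{L}^2(\Omega;\Real^3)}{\bvec{\chi}_h}^2$. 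Chaining this with (i) and (ii),
\[
\sum_{T\in\Th}h_T\sum_{E\in\ET}\alpha_E^2\lesssim\norm{\bvec{L}^2(\Omega;\Real^3)}{\bvec{\chi}_h}^2\lesssim\norm{\bvec{L}^2(\Omega;\Real^3)}{\bvec{\phi}_h}^2\lesssim\sum_{T\in\Th}h_T^{-1}\sum_{F\in\FT}\alpha_F^2,
\]
which is the second relation in \eqref{eq:Whitney.E:poincare}.

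I expect the main obstacle to be step (i): producing the divergence-free $\bvec{\phi}_h$ with $\bvec{L}^2$-norm controlled by the data $(\alpha_F)$ alone, i.e.\ a stable \emph{local} divergence-free extension of the prescribed boundary fluxes into the submesh of each element. It is precisely here that the element-balance condition in \eqref{eq:Whitney.E:condition.1} and the uniform control of the submesh regularity are both used; once $\bvec{\phi}_h$ is available, the remaining ingredients --- the two scaling equivalences, Stokes' theorem, the finite-overlap summations, and the finite element Poincar\'e inequality for the curl --- are routine or standard, and the second, global condition in \eqref{eq:Whitney.E:condition.1} enters only through Lemma~\ref{lem:condition.cohom.div}.
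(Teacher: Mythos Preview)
Your proposal is correct and follows the same architecture as the paper: build a divergence-free $\RT{1}(\fTh)$ field whose face fluxes match the data, invoke Lemma~\ref{lem:condition.cohom.div} together with the uniform Poincar\'e inequality for the simplicial de Rham complex to obtain a preimage in $\NE{1}(\fTh)$, read off the edge values, and compare the mimetic sums to the $L^2$-norms via the scaling of the lowest-order basis functions.

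The only substantive difference is in step~(i), the local divergence-free extension inside each $T\in\Th$. The paper does this \emph{continuously}: it sets $g_{|S}\coloneq -|S|^{-1}\sum_{F\in\FS\cap\FT}\omega_{TF}\alpha_F$, checks $\int_T g=0$ from \eqref{eq:Whitney.E:condition.1}, uses Lion's Lemma to obtain $\bvec{u}\in\bvec{H}^1_0(T;\Real^3)$ with $\DIV\bvec{u}=g$, and defines the interior sub-face values as the exact fluxes of $\bvec{u}$; the $H^1$-control of $\bvec{u}$ then yields the required $L^2$-bound via a trace inequality. You instead stay \emph{discrete}, invoking the uniform inf--sup for the $\RT{1}_0/\Poly{0}$ pair on the submesh of $T$ to correct an initial extension. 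Both rely on the same structural fact (star-shapedness of $T$ with respect to a ball of radius $\varrho h_T$ and shape-regularity of $\fTh$), and the two tools are essentially equivalent---indeed, the standard proof of your discrete inf--sup proceeds via Lion's Lemma and a Fortin operator. Your route is slightly more self-contained in the discrete setting; the paper's is more explicit about where the constant comes from. Either is fine.
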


\begin{proof}
  Let $(\bvec{\phi}_{E,S})_{S\in\fTh,E\in\ES}$ and $(\bvec{\psi}_{F,S})_{S\in\fTh,F\in\FS}$ (with $\FS$ denoting the set of triangular faces of $S$) be the families of basis functions respectively given by \eqref{eq:S.DR.1} and \eqref{eq:S.DR.2} below.
  The main difficulty is to extend the family $(\alpha_F)_{F\in\Fh}$ to a family $(\alpha_F)_{F\in \fFh}$ satisfying, for all $S\in\fTh$, 
  \[
  \sum_{F\in\FS} \omega_{SF} \alpha_F = 0.
  \]
  We perform the construction locally on each element $T\in\Th$.
  Let $g \in L^2(T)$ be the piecewise constant function on $\ST$ such that
  \begin{equation}\label{eq:Whitney.E:g}
    g_{|S} \coloneq - \frac{1}{\vert S \vert}\sum_{F\in\FS\cap\FT} \wTF \alpha_F
    \qquad\forall S \in \ST.
  \end{equation}
  Recalling \eqref{eq:Whitney.E:condition.1}, by definition we have $\int_T g = 0$.
  Hence, using Lion's Lemma \cite[Theorem 3.1.e]{Amrouche.Ciarlet.ea:15}, we infer the existence of $\bvec{u} \in \bvec{H}^1_0(T;\Real^3)$ such that
  \begin{equation}\label{eq:lions.u}
    \text{%
      $\DIV \bvec{u} = g$ and
      $%
        \seminorm{\bvec{H}^1(T;\Real^3)}{\bvec{u}}
      \lesssim
      \norm{L^2(T)}{g} \lesssim h_T^{-\frac32} \sum_{F\in\FT} \vert \alpha_F \vert$,
    }
  \end{equation}
  where the last inequality follows from the definition of $g$ after observing that $|S| \simeq h_T^3$ for all $S \in \ST$ by mesh regularity.
  Setting $\alpha_F \coloneq \int_F \bvec{u} \cdot \nF$ (with $\nF$ denoting the unit normal vector to $F$, with orientation consistent with that of $F$), 
  for all $S \in \ST$ and all $F \in \FS \setminus \FT$.
  We infer that, for all $S \subset \fTh$, $\sum_{F\in\FS} \omega_{SF} \alpha_F = 0$, noticing that 
  \begin{equation*}
    \begin{aligned}
      \sum_{F\in\FS} \omega_{SF} \alpha_F
      &= \sum_{F\in \FS \setminus \FT} \omega_{SF} \alpha_F + \sum_{F\in \FS \cap \FT} \omega_{SF} \alpha_F\\
      &= \sum_{F\in \FS} \omega_{SF} \int_F \bvec{u} \cdot \nF + \sum_{F\in \FS \cap \FT} \omega_{SF} \alpha_F\\
      &= \int_S \DIV \bvec{u} + \sum_{F\in \FS \cap \FT} \omega_{SF} \alpha_F\\
      \overset{\eqref{eq:lions.u},\,\eqref{eq:Whitney.E:g}}&= -\sum_{F\in \FS \cap \FT} \omega_{SF} \alpha_F + \sum_{F\in \FS \cap \FT} \omega_{SF} \alpha_F = 0,
    \end{aligned}
  \end{equation*}
  where we have used the fact that $\bvec{u} \cdot \nF = 0$ on every face $F \in \FT$ lying on the boundary of $T$ to obtain the second equality.
  Let $\overline{\bvec{u}}_T \coloneq \frac{1}{|T|}\int_T \bvec{u}$ and, for all integer $i$ such that $1\le i\le 3$, denote by $u_i$ and $\overline{u}_i$ the $i$th components of $\bvec{u}$ and $\overline{\bvec{u}}_T$, respectively.
  It holds
  \[
  \begin{aligned}
    |T|\,\overline{u}_i
    = \int_T u_i 
    =
    \int_T \bvec{u}\cdot \bvec{e}_i
    &= \int_T \bvec{u}\cdot \GRAD(x_i - \bvec{x}_T \cdot \bvec{e}_i)
    \\
    &= - \int_T \DIV \bvec{u} ~ (x_i - \bvec{x}_T \cdot \bvec{e}_i)
    = - \int_T g ~ (x_i - \bvec{x}_T \cdot \bvec{e}_i) 
    \lesssim h_T \sum_{F\in\FT} \vert \alpha_F \vert,
  \end{aligned}  
  \]
  so that, recalling that $|T| \simeq h_T^3$,
  \begin{equation}\label{eq:norm.bar.u}
    \norm{\bvec{L}^2(T;\Real^3)}{\overline{\bvec{u}}_T}
    \lesssim h_T^{-\frac12} \sum_{F\in\FT} \vert \alpha_F \vert.
  \end{equation}
  Therefore, we can use the Poincaré inequality on the domain $T$ to write
  \[
  \begin{aligned}
    \norm{\bvec{L}^2(T;\Real^3)}{\bvec{u}}
    &\leq \norm{\bvec{L}^2(T;\Real^3)}{\bvec{u} - \overline{\bvec{u}}_T} 
    + \norm{\bvec{L}^2(T;\Real^3)}{\overline{\bvec{u}}_T}
    \\
    &\lesssim h_T 
      \seminorm{\bvec{H}^1(T;\Real^3)}{\bvec{u}}
    + \norm{\bvec{L}^2(T;\Real^3)}{\overline{\bvec{u}}_T}
    \overset{\eqref{eq:lions.u},\,\eqref{eq:norm.bar.u}}\lesssim
    h_T^{-\frac12} \sum_{F\in\FT} \vert \alpha_F \vert.
  \end{aligned}
  \]
  Combining this result with the continuous trace inequality we have, for all $S \in \ST$
  and all $F \in \FS \setminus \FT$,
  \[
  \vert \alpha_F \vert
  \lesssim h_F \norm{\bvec{L}^2(F;\Real^3)}{\bvec{u}} 
  \lesssim h_T^{\frac12} \norm{\bvec{L}^2(T;\Real^3)}{\bvec{u}}
  + h_T^{\frac32} %
    \seminorm{\bvec{H}^1(T; \Real^3)}{\bvec{u}}
  \lesssim \sum_{F\in\FT} \vert \alpha_F \vert .
  \]
  Therefore, summing over all tetrahedra inside $T$ and all tetrahedral faces, we obtain
  \begin{equation}\label{eq:PW1.alpha}
    \sum_{S\in\ST} \sum_{F\in\FS} \alpha_F^2 \lesssim \sum_{F\in\FT} \alpha_F^2 .
  \end{equation}

  We next define the following piecewise polynomial function:
  \begin{equation} \label{eq:W1.P0}
    \bvec{\psi}_h \coloneq \sum_{S\in\fTh} \sum_{F\in\FS} \alpha_F \bvec{\psi}_{F,S} \in\Hdiv{\Omega}.
  \end{equation}
  Since $\DIV \bvec{\psi}_h = 0$ and, for all $1 \leq i \leq b_2$, $\sum_{F\in\faces{\gamma_i}} \wOF \int_F \bvec{\psi}_h \cdot \nF = 0$, we can use Lemma \ref{lem:condition.cohom.div} to
  infer from the uniform Poincar\'e inequality on the simplicial de Rham complex \cite{Arnold:18}
  the existence of 
  \[
  \bvec{\phi}_h \coloneq \sum_{S\in\fTh}\sum_{E\in\ES} \alpha_E \bvec{\phi}_{E,S} \in \Hcurl{\Omega}
  \]
  such that 
  \begin{equation}\label{eq:curl.surjectivity}
    \text{%
      $\CURL \bvec{\phi}_h = \bvec{\psi}_h$
      and
      $\norm{\bvec{L}^2(\Omega;\Real^3)}{\bvec{\phi}_h}
      \lesssim \norm{\bvec{L}^2(\Omega;\Real^3)}{\bvec{\psi}_h}$.
    }
  \end{equation}
  Summing \eqref{eq:W.diff.1}, we have
  \begin{equation}\label{eq:W1.P1}
    \CURL \bvec{\phi}_h = \sum_{S\in\fTh}\sum_{F\in\FS} \left( \sum_{E\in\EF} \wFE \alpha_E \right) \bvec{\psi}_{F,S}.
  \end{equation}
  Hence, equating \eqref{eq:W1.P0} and \eqref{eq:W1.P1}, we infer that, for all $F\in\Fh\subset\fFh$, $\sum_{E\in\EF}\omega_{FE}\alpha_E = \alpha_F$.
  Moreover, noticing that both $\bvec{\phi}_{E,S}$ and $\bvec{\psi}_{F,S}$ are only supported in $S$, we have 
  \begin{equation}\label{eq:PW1.norm.E}
    \begin{aligned}
      \norm{\bvec{L}^2(\Omega;\Real^3)}{\bvec{\phi}_h}^2 
      &= \sum_{T\in\Th}\sum_{S\in\ST} \int_S \left( \sum_{E\in\ES} \alpha_E \bvec{\phi}_{E,S}\right)^2
      \\
      &\simeq \sum_{T\in\Th}\sum_{S\in\ST} \sum_{E\in\ES} \alpha_E^2 \norm{\bvec{L}^2(S;\Real^3)}{\bvec{\phi}_{E,S}}^2
      \\
      \overset{\eqref{eq:W.norm.1}}&\simeq \sum_{T\in\Th}\sum_{S\in\ST} h_S \sum_{E\in\ES} \alpha_E^2
      \\
      &\gtrsim \sum_{T\in\Th} h_T \sum_{E\in\ET} \alpha_E^2
    \end{aligned}
  \end{equation}
  and
  \begin{equation}\label{eq:PW1.norm.F}
    \begin{aligned}
      \norm{\bvec{L}^2(\Omega;\Real^3)}{\bvec{\psi}_h}^2 
      &= \sum_{T\in\Th}\sum_{S\in\ST} \int_S \left( \sum_{F\in\FS} \alpha_F \bvec{\psi}_{F,S}\right)^2
      \\
      &\simeq \sum_{T\in\Th}\sum_{S\in\ST} \sum_{F\in\FS} \alpha_F^2 \norm{\bvec{L}^2(S;\Real^3)}{\bvec{\psi}_{F,S}}^2
      \\
      \overset{\eqref{eq:W.norm.2}}&\simeq \sum_{T\in\Th}\sum_{S\in\ST} h_S^{-1} \sum_{F\in\FS} \alpha_F^2
      \\
      \overset{\eqref{eq:PW1.alpha}}&\lesssim \sum_{T\in\Th} h_T^{-1} \sum_{F\in\FT} \alpha_F^2,
    \end{aligned}
  \end{equation}
  where, in the last passage, we have additionally used the fact that $h_S^{-1} \lesssim h_T^{-1}$ for all $S \in \ST$ by mesh regularity.
  Finally, to prove \eqref{eq:Whitney.E:poincare}, it suffices to write
  \[
  \sum_{T\in\Th} h_T \sum_{E\in\ET} \alpha_E^2
  \overset{\eqref{eq:PW1.norm.E}}\lesssim \norm{\bvec{L}^2(\Omega;\Real^3)}{\bvec{\phi}_h}^2
  \overset{\eqref{eq:curl.surjectivity}}\lesssim
  \norm{\bvec{L}^2(\Omega;\Real^3)}{\bvec{\psi}_h}^2
  \overset{\eqref{eq:PW1.norm.F}}\simeq \sum_{T\in\Th} h_T^{-1} \sum_{F\in\FT} \alpha_F^2 .\qedhere
  \]
\end{proof}

\subsection{Mimetic Poincar\'e inequality for collections of face values}

\begin{theorem}[Mimetic Poincar\'e inequality for collections of face values] \label{thm:Whitney.F}
  Let $(\alpha_T)_{T\in\Th}\in\Real^{\Th}$ be a collection of values at elements. 
  Then, there is a collection $(\alpha_F)_{F\in\Fh}\in\Real^{\Fh}$ of values at faces such that, for all $T\in\Th$, 
  \begin{equation}\label{eq:Whitney.F:poincare}
    \text{%
      $\sum_{F\in\FT}\omega_{TF} \alpha_F = \alpha_T$
      and $\sum_{T\in\Th} h_T^{-1} \sum_{F\in\FT} \alpha_F^2
      \lesssim \sum_{T\in\Th} h_T^{-3} \alpha_T^2$
    }
  \end{equation}
  with hidden constant only depending on $\Omega$ and the mesh regularity parameter.
\end{theorem}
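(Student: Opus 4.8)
The plan is to follow the template of the proof of Theorem~\ref{thm:Whitney.E}, applied one step lower in the sequence. The key simplification is that here there is no cohomology obstruction: the divergence is onto from $\Hdiv{\Omega}$ to $L^2(\Omega)$ (equivalently, the top cohomology of the de Rham complex vanishes, since $\Omega$ is a connected domain of $\Real^3$ with non-empty boundary), so that $\DIV$ maps the lowest-order space $\RT{1}(\fTh)$ onto the whole of $\Poly{0}(\fTh)$ with a uniform right inverse. In particular, unlike in the proof of Theorem~\ref{thm:Whitney.E}, no analogue of Lemma~\ref{lem:condition.cohom.div} is required and no condition on the data $(\alpha_T)_{T\in\Th}$ appears.

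Concretely, I would first introduce $g_h\in\Poly{0}(\fTh)$ defined by $(g_h)_{|T}\coloneq\alpha_T/|T|$ for all $T\in\Th$; this is constant on each $S\in\ST$, and by mesh regularity $\norm{L^2(\Omega)}{g_h}^2=\sum_{T\in\Th}|T|^{-1}\alpha_T^2\simeq\sum_{T\in\Th}h_T^{-3}\alpha_T^2$. By the uniform Poincar\'e inequality on the simplicial de Rham complex \cite{Arnold:18} applied to the divergence, there exists $\bvec{\psi}_h=\sum_{S\in\fTh}\sum_{F\in\FS}\alpha_F\,\bvec{\psi}_{F,S}\in\Hdiv{\Omega}$, expanded on the basis \eqref{eq:S.DR.2}, such that $\DIV\bvec{\psi}_h=g_h$ and $\norm{\bvec{L}^2(\Omega;\Real^3)}{\bvec{\psi}_h}\lesssim\norm{L^2(\Omega)}{g_h}$. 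This determines a collection $(\alpha_F)_{F\in\fFh}$, and I would \emph{define} the sought face values by $\alpha_F\coloneq\sum_{f\in\fFh,\,f\subset\overline F}\alpha_f$ for all $F\in\Fh$.

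To obtain the balance relation in \eqref{eq:Whitney.F:poincare}, observe that $\DIV\bvec{\psi}_h$ is constant on each $S\in\fTh$ and that, by the divergence theorem and the normalisation of the Raviart--Thomas degrees of freedom, $\int_S\DIV\bvec{\psi}_h=\sum_{F\in\FS}\omega_{SF}\alpha_F$; since $\DIV\bvec{\psi}_h=g_h$, this gives $\sum_{F\in\FS}\omega_{SF}\alpha_F=|S|\,\alpha_T/|T|$ for every $S\in\ST$. Summing this identity over $S\in\ST$, the submesh faces interior to $T$ cancel in pairs (opposite orientations), while each submesh face on $\partial T$ is---by the assumption that $\fMh$ is obtained from $\Mh$ by adding only face and element centers---contained in exactly one $F\in\FT$ and enters with the sign $\omega_{TF}$; the left-hand side therefore collapses to $\sum_{F\in\FT}\omega_{TF}\alpha_F$, and the right-hand side to $|T|^{-1}\alpha_T\sum_{S\in\ST}|S|=\alpha_T$, which is the first relation in \eqref{eq:Whitney.F:poincare}.

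For the stability estimate, each $F\in\Fh$ is split into $\card(\EF)\lesssim1$ submesh faces, hence $\alpha_F^2\lesssim\sum_{f\subset\overline F}\alpha_f^2$ by Cauchy--Schwarz; moreover every submesh face on $\partial T$ belongs to exactly one $S\in\ST$, and $h_S\simeq h_T$ for $S\in\ST$ by mesh regularity. Proceeding exactly as for \eqref{eq:PW1.norm.F}---using the norm scaling \eqref{eq:W.norm.2} of the Raviart--Thomas basis functions together with their uniform local linear independence on each simplex---I would then chain
\[
\sum_{T\in\Th}h_T^{-1}\sum_{F\in\FT}\alpha_F^2
\lesssim\sum_{T\in\Th}\sum_{S\in\ST}h_S^{-1}\sum_{F\in\FS}\alpha_F^2
\simeq\norm{\bvec{L}^2(\Omega;\Real^3)}{\bvec{\psi}_h}^2
\lesssim\norm{L^2(\Omega)}{g_h}^2
\simeq\sum_{T\in\Th}h_T^{-3}\alpha_T^2,
\]
which is the second relation in \eqref{eq:Whitney.F:poincare}. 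The only genuinely delicate part is the orientation and combinatorial bookkeeping needed to pass from the simplicial identities to statements on $\Mh$ (the telescoping over $\ST$ and the identification of submesh boundary faces with original faces); the analytic ingredient---surjectivity of the discrete divergence with a uniform right inverse---is the classical uniform Poincar\'e inequality on the simplicial complex and requires no new argument here.
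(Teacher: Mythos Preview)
Your proposal is correct and follows essentially the same route as the paper: both build the piecewise-constant right-hand side $g_h=\psi_h$ with value $\alpha_T/|T|$ on each $T$ (the paper writes this as $\sum_{S\in\ST}\alpha_T\frac{|S|}{|T|}\psi_S$, which is the same function since $\psi_S=1/|S|$), invoke the uniform simplicial Poincar\'e inequality to obtain a Raviart--Thomas preimage, telescope over $\ST$ so that interior submesh faces cancel, and conclude with the scaling \eqref{eq:W.norm.2}. The one place where you are actually more explicit than the paper is the passage from submesh face values to polyhedral face values: you set $\alpha_F\coloneq\sum_{f\subset\overline F}\alpha_f$ and bound $\alpha_F^2$ by Cauchy--Schwarz, whereas the paper writes $\sum_{F\in\FT}\omega_{TF}\alpha_F$ and the final ``$\gtrsim$'' in \eqref{eq:est.norm.phih} without spelling out this aggregation---your version makes the bookkeeping cleaner but does not change the argument.
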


\begin{proof}
  Let $(\bvec{\phi}_F)_{F\in\fFh}$ and $(\psi_S)_{S\in\fTh}$ be the basis functions of the face Raviart--Thomas--N\'ed\'elec and of the fully discontinuous piecewise affine spaces on the tetrahedral submesh, respectively given by \eqref{eq:S.DR.2} and \eqref{eq:S.DR.3} below.
  Define the following piecewise polynomial function:
  \begin{equation}\label{eq:W2.P0}
    \psi_h \coloneq \sum_{T\in\Th}\sum_{S\in\ST} \alpha_T \frac{\vert S \vert}{\vert T \vert} \psi_S .
  \end{equation}  
  We infer from the uniform Poincar\'e inequality on the simplicial de Rham complex \cite{Arnold:18}
  the existence of 
  $\bvec{\phi}_h \coloneq \sum_{F\in\fFh} \alpha_F \bvec{\phi}_F\in \RT{1}(\fTh) \subset \Hdiv{\Omega}$ such that 
  \begin{equation}\label{eq:div.surjectivity}
    \text{%
      $\DIV \bvec{\phi}_h = \psi_h$
      and
      $\norm{\bvec{L}^2(\Omega;\Real^3)}{\bvec{\phi}_h}
      \lesssim \norm{L^2(\Omega)}{\psi_h}$.
    }
  \end{equation}
  Summing \eqref{eq:W.diff.2}, on the other hand, we obtain
  \begin{equation} \label{eq:W2.P3} 
    \DIV \bvec{\phi}_h = \sum_{T\in\Th}\sum_{S\in\ST} \sum_{F\in\FS}\omega_{SF} \alpha_F \psi_S,
  \end{equation}
  where $\omega_{SF}$ is the orientation of $F$ relative to $S$.
  Since each $\psi_S$ is supported in $S$, we infer equating \eqref{eq:W2.P0} and \eqref{eq:W2.P3} that, for all $T \in \Th$ and all $S \in \ST$,
  \[
  \alpha_T \frac{\vert S \vert}{\vert T \vert} = \sum_{F\in\FS}\omega_{SF} \alpha_F.
  \]
  For all $T\in\Th$, summing this relation over $S \in \ST$, we have
  \[
  \sum_{S\in\ST} \alpha_T \frac{\vert S \vert}{\vert T \vert}
  = \sum_{S\in\ST} \sum_{F\in\FS}\omega_{SF} \alpha_F
  \implies
  \alpha_T \cancelto{1}{\frac{\sum_{S\in\ST} \vert S \vert}{\vert T \vert}}
  = \sum_{F\in\FT}\omega_{TF} \alpha_F,
  \]
  where we have used the fact that the contributions from the simplicial faces internal to $T$ cancel out in the right-hand side.

  It remains to check that \eqref{eq:Whitney.F:poincare} holds.
  Noticing that the only $\bvec{\phi}_F$ supported in $S$ are those associated with its faces $F$ collected in the set $\FS$, we have
  \begin{equation}\label{eq:est.norm.phih}
    \begin{aligned}
      \norm{\bvec{L}^2(\Omega;\Real^3)}{\bvec{\phi}_h}^2 
      &= \sum_{T\in\Th} \sum_{S\in\ST} \int_S \bigg( \sum_{F\in\FS} \alpha_F \bvec{\phi}_F\bigg)^2
      \\
      &\simeq \sum_{T\in\Th} \sum_{S\in\ST} \sum_{F\in\FS} \alpha_F^2 \norm{\bvec{L}^2(S;\Real^3)}{\bvec{\phi}_F}^2
      \\
      \overset{\eqref{eq:W.norm.2}}&\simeq \sum_{T\in\Th} h_T^{-1} \sum_{S\in\ST} \sum_{F\in\FS} \alpha_F^2
      \\ 
      &\gtrsim \sum_{T\in\Th} h_T^{-1} \sum_{F\in\FT} \alpha_F^2,
    \end{aligned}
  \end{equation}
  where we have used $h_S^{-1} \simeq h_T^{-1}$ (consequence of mesh regularity) in the third line.
  Likewise, we have
  \begin{equation}\label{eq:est.norm.psih}
    \norm{L^2(\Omega)}{\psi_h}^2 
    = \sum_{T\in\Th} \sum_{S\in\ST} \alpha_T^2 \frac{\vert S \vert^2}{\vert T \vert^2} \norm{L^2(S)}{\psi_S}^2
    \overset{\eqref{eq:W.norm.3}}\simeq \sum_{T\in\Th} h_T^{-3} \alpha_T^2,
  \end{equation}
  where we have used $h_S \simeq h_T$ and $\vert S \vert \simeq \vert T \vert$.
  Finally, to prove \eqref{eq:Whitney.F:poincare}, we write
  \[
  \sum_{T\in\Th} h_T^{-1} \sum_{F\in\FT} \alpha_F^2
  \overset{\eqref{eq:est.norm.phih}}\lesssim \norm{\bvec{L}^2(\Omega;\Real^3)}{\bvec{\phi}_h}^2
  \overset{\eqref{eq:div.surjectivity}}\lesssim
  \norm{L^2(\Omega)}{\psi_h}^2
  \overset{\eqref{eq:est.norm.psih}}\simeq \sum_{T\in\Th} h_T^{-3} \alpha_T^2 .\qedhere
  \]
\end{proof}


\section{Proofs of Poincar\'e inequalities in DDR spaces} \label{sec:proof.DDR.poincare}

\subsection{Poincar\'e inequality for the gradient}\label{sec:uGh.poincare}

We start with the following preliminary lemma.

\begin{lemma}[Continuous inverse of the discrete gradient]\label{lem:uGh.inverse}
  For all $\underline{p}_h \in \Xgrad{h}$, there is $\underline{q}_h \in \Xgrad{h}$ such that
  \begin{equation}\label{eq:uGh.inverse}
    \text{%
      $\uGh\underline{q}_h = \uGh\underline{p}_h$
      and $\tnorm{\GRAD,h}{\underline{q}_h} \lesssim \tnorm{\CURL,h}{\uGh\underline{p}_h}$.
    }
  \end{equation}
\end{lemma}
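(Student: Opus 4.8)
\emph{Approach.} The plan is to exhibit $\underline{q}_h$ explicitly as $\underline{p}_h$ with a suitable constant subtracted, and then to estimate $\tnorm{\GRAD,h}{\underline{q}_h}$ level by level, from vertices up to elements, using Theorem~\ref{thm:Whitney.V} at the vertex level and the defining relations \eqref{eq:GE}, \eqref{eq:GF}, \eqref{eq:GT} of the local gradients above it. Write $\uvec{G}\coloneq\uGh\underline{p}_h$ and denote its components $G_E$, $(\bvec{G}_{\cvec{R},F},\bvec{G}_{\cvec{R},F}^\compl)$, $(\bvec{G}_{\cvec{R},T},\bvec{G}_{\cvec{R},T}^\compl)$ as in \eqref{eq:uGh}. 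The collection $\underline{1}_h\in\Xgrad{h}$ whose components are all equal to $1$ belongs to $\Ker\uGh$: the edge gradient of it vanishes by \eqref{eq:GE} with $q_E=q_V=1$, and the face and element gradients vanish by \eqref{eq:GF}, \eqref{eq:GT} together with the (tangential) divergence theorem on $F$ and on $T$. Hence, for every $C\in\Real$, the collection $\underline{q}_h$ obtained from $\underline{p}_h$ by subtracting $C$ from each component satisfies $\uGh\underline{q}_h=\uGh\underline{p}_h$, so it only remains to choose $C$ and bound $\tnorm{\GRAD,h}{\underline{q}_h}$ by $\tnorm{\CURL,h}{\uvec{G}}$.

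\emph{Vertices and edges.} I would apply Theorem~\ref{thm:Whitney.V} to $(p_V)_{V\in\Vh}$ to fix $C$. Taking $r=1$ in \eqref{eq:GE} gives $\jump{E}{p_V}=\int_E G_E$, so Cauchy--Schwarz bounds the right-hand side of \eqref{eq:Whitney.V:poincare} by $\sum_{T\in\Th}h_T\sum_{E\in\ET}h_E\norm{L^2(E)}{G_E}^2$, which is $\lesssim\tnorm{\CURL,h}{\uvec{G}}^2$ by mesh regularity (each $E\in\ET$ lies in some $F\in\FT$, with $h_E\le h_F\simeq h_T$) and \eqref{eq:tnorm.curl.h}; this controls the vertex contribution $\sum_{T\in\Th}h_T^3\sum_{V\in\VT}|q_V|^2$. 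For an edge $E$, relation \eqref{eq:GE} for $\underline{q}_E$ (edge component $q_E=p_E-C$, vertex values $q_V$, unchanged edge gradient $G_E$) reads $\int_E q_E\,r'=-\int_E G_E\,r+\jump{E}{q_V r}$ for all $r\in\Poly{k}(E)$; taking $r\in\Poly{k}(E)$ to be the antiderivative of $q_E$ vanishing at the first endpoint of $E$, and using $\norm{L^\infty(E)}{r}\le h_E^{1/2}\norm{L^2(E)}{q_E}$, one gets $\norm{L^2(E)}{q_E}\lesssim h_E\norm{L^2(E)}{G_E}+h_E^{1/2}\max_{V\in\VE}|q_V|$, so the edge contribution is controlled by the edge terms of $\tnorm{\CURL,h}{\uvec{G}}^2$ and the vertex terms just bounded.

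\emph{Faces and elements; the crux.} The difficulty here is that $\GF\underline{q}_F$ and $\GT\underline{q}_T$ are vector-valued elements of $\vPoly{k}(F)$ and $\vPoly{k}(T)$, whereas $\uGh\underline{p}_h$ only retains their projections onto $\Roly{k-1}$ and $\cRoly{k}$; in particular the $L^2$-orthogonal complement of $\Roly{k-1}(F)$ in $\Roly{k}(F)$ is not visible in $\uvec{G}$. I would bypass this by testing against functions in the complement spaces. For $q_F=p_F-C\in\Poly{k-1}(F)$, choose $\bvec{v}\in\cRoly{k}(F)$ with $\DIV_F\bvec{v}=q_F$ (the restriction of $\DIV_F$ to $\cRoly{k}(F)$ being an isomorphism onto $\Poly{k-1}(F)$, with $\norm{L^2(F)}{\bvec{v}}\lesssim h_F\norm{L^2(F)}{q_F}$); since $\bvec{v}\in\cRoly{k}(F)$, one has $\int_F\GF\underline{q}_F\cdot\bvec{v}=\int_F\bvec{G}_{\cvec{R},F}^\compl\cdot\bvec{v}$ by definition of the $L^2$-orthogonal projector, so \eqref{eq:GF} and a discrete trace inequality on $\bvec{v}$ yield $\norm{L^2(F)}{q_F}\lesssim h_F\norm{L^2(F)}{\bvec{G}_{\cvec{R},F}^\compl}+h_F^{1/2}\sum_{E\in\EF}\norm{L^2(E)}{q_E}$. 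To reach the elements I also need $\norm{L^2(F)}{\trF\underline{q}_F}$: the first relation in \eqref{eq:GF} determines $\GF\underline{q}_F$ entirely from $q_F$ and $(q_E)_{E\in\EF}$, and testing it against $\GF\underline{q}_F$ itself gives $\norm{L^2(F)}{\GF\underline{q}_F}\lesssim h_F^{-1}\norm{L^2(F)}{q_F}+h_F^{-1/2}\sum_{E\in\EF}\norm{L^2(E)}{q_E}$; testing then the second relation in \eqref{eq:GF} against $\bvec{v}\in\cRoly{k+2}(F)$ such that $\DIV_F\bvec{v}=\trF\underline{q}_F$ produces $\norm{L^2(F)}{\trF\underline{q}_F}\lesssim\norm{L^2(F)}{q_F}+h_F^{1/2}\sum_{E\in\EF}\norm{L^2(E)}{q_E}$. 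Finally, for $q_T=p_T-C\in\Poly{k-1}(T)$, testing \eqref{eq:GT} against $\bvec{v}\in\cRoly{k}(T)$ such that $\DIV\bvec{v}=q_T$ — so that $\int_T\GT\underline{q}_T\cdot\bvec{v}=\int_T\bvec{G}_{\cvec{R},T}^\compl\cdot\bvec{v}$ — and using a trace inequality yields $\norm{L^2(T)}{q_T}\lesssim h_T\norm{L^2(T)}{\bvec{G}_{\cvec{R},T}^\compl}+h_T^{1/2}\sum_{F\in\FT}\norm{L^2(F)}{\trF\underline{q}_F}$.

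\emph{Conclusion and main obstacle.} Combining the four families of bounds — substituting the $\trF$ estimate into the element estimate and the edge estimate into the face and element estimates — then using mesh regularity to replace sub-entity diameters by $h_T$ and bounded overlap to re-sum, one arrives at $\tnorm{\GRAD,h}{\underline{q}_h}^2\lesssim\tnorm{\CURL,h}{\uvec{G}}^2=\tnorm{\CURL,h}{\uGh\underline{p}_h}^2$, which is \eqref{eq:uGh.inverse}. I expect the main obstacle to be precisely this face/element bookkeeping: the test spaces $\cRoly{k}(F)$, $\cRoly{k+2}(F)$, $\cRoly{k}(T)$ must be chosen so that the components of the discrete gradients not stored in $\uvec{G}$ never enter the estimates, the element bound must be routed through the scalar trace $\trF$ rather than through $\GT\underline{q}_T$ directly, and the powers of $h_T$ generated along the way must be tracked carefully (each of them harmless precisely because $h_T\lesssim 1$ and sub-entity diameters are comparable to $h_T$) so that every term ends up inside $\tnorm{\CURL,h}{\uvec{G}}^2$.
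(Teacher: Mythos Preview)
Your proof is correct and follows essentially the same strategy as the paper: produce $\underline{q}_h$ with $\uGh\underline{q}_h=\uGh\underline{p}_h$, invoke Theorem~\ref{thm:Whitney.V} at the vertex level, and then propagate the estimates upward through edges, faces, and elements by testing the defining relations \eqref{eq:GE}--\eqref{eq:GT} against polynomials in the Koszul complements $\cRoly{k}$. Your observation that one may simply take $\underline{q}_h=\underline{p}_h-C\,\underline{1}_h$ is a clean shortcut relative to the paper's explicit construction via relations \eqref{eq:qV}--\eqref{eq:qT} (which, once unwound, yields the same $\underline{q}_h$), and your explicit bound on $\trF\underline{q}_F$ makes transparent a step the paper passes over quickly; otherwise the two arguments coincide.
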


\begin{proof}
  We provide an explicit definition of $\underline{q}_h$ and check that \eqref{eq:uGh.inverse} holds.
  Specifically, we let $\underline{q}_h \in \Xgrad{h}$ be such that, for all $E\in\Eh$,
  \begin{gather}\label{eq:qV}
    \jump{E}{q_V} = \int_E \GE \underline{p}_E \qquad \forall E\in\Eh,
    \\ \label{eq:qE}
    \int_E q_E\, r' = - \int_E \GE \underline{p}_E\, r
    + \jump{E}{q_V\, r}
    \qquad \forall r \in \Poly[0]{k}(E),
  \end{gather}
  for all $F\in\Fh$,
  \begin{equation}\label{eq:qF}
    \int_F q_F\,\DIV_F \bvec{v}
    = -\int_F \GF \underline{p}_F \cdot \bvec{v}
    + \sum_{E\in\EF}\omega_{FE}\int_E q_E\,(\bvec{v}\cdot\normal_{FE})
    \qquad \forall \bvec{v} \in \cRoly{k}(F),
  \end{equation}
  and, for all $T\in\Th$,
  \begin{equation}\label{eq:qT}
    \int_T q_T\,\DIV\bvec{v}
    + \sum_{F\in\FT} \omega_{TF}\int_F q_F\,(\bvec{v}\cdot\normal_F)
    \qquad \forall \bvec{v} \in \cRoly{k}(T).
  \end{equation}
  Notice that
    the vertex values $(q_V)_{V \in \Vh}$ are only defined up to a global constant and that
  $q_F$ (resp., $q_T$) is well-defined by condition \eqref{eq:qF} (resp., \eqref{eq:qT}) since $\DIV_F:\cRoly{k}(F)\to\Poly{k-1}(F)$ (resp., $\DIV:\cRoly{k}(T)\to\Poly{k-1}(T)$) is an isomorphism.
  \medskip\\
  \underline{1. \emph{Equality of the discrete gradient.}}
  Let us first briefly show that
  \begin{equation}\label{eq:uGh.q=uGh.p}
    \uGh \underline{q}_h = \uGh \underline{p}_h.
  \end{equation}
  By \eqref{eq:qV} and \eqref{eq:qE} along with the definition \eqref{eq:GE} of $\GE$, it holds
  \begin{equation}\label{eq:GE.q=GE.p}
    \GE \underline{q}_E = \GE \underline{p}_E \qquad \forall E \in \Eh.
  \end{equation}
  Let now $F \in \Fh$.
  By \cite[Lemma 14]{Bonaldi.Di-Pietro.ea:23} for $(k,d) = (0,2)$, $\int_F \GF \underline{p}_F \cdot \VROT_F r = -\sum_{E\in\EF}\omega_{FE}\int_E \GE \underline{p}_E\, r$ for all $r \in \Poly[0]{k+1}(F)$, 
  so that, by \eqref{eq:GE.q=GE.p}, $\Rproj{k}{F} \GF \underline{q}_F = \Rproj{k}{F} \GF \underline{p}_F$.
  On the other hand, plugging the definition \eqref{eq:qF} of $q_F$ into \eqref{eq:GF}, we readily infer that $\cRproj{k}{F} \GF \underline{q}_F = \cRproj{k}{F} \GF \underline{p}_F$.
  The above relations imply
  \begin{equation}\label{eq:GF.q=GF.p}
    \GF \underline{q}_F = \GF \underline{p}_F
    \qquad \forall F\in\Fh.
  \end{equation}
  The equality of the components associated with an element $T \in \Th$ is proved in a similar way. First, using again \cite[Lemma 14]{Bonaldi.Di-Pietro.ea:23}, this time with $(k,d) = (0,3)$ (which corresponds to \cite[Proposition 1]{Di-Pietro.Droniou:23*1}), we infer that $\int_T \GT \underline{q}_T \cdot \CURL \bvec{v} = - \sum_{F\in\FT} \int_F \GF \underline{q}_F \cdot (\bvec{v} \times \normal_F)$ for all $\bvec{v} \in \cGoly{k+1}(T)$.
  Accounting for \eqref{eq:GF.q=GF.p}, this yields $\Rproj{k}{T} \GT \underline{q}_T = \Rproj{k}{T} \GT \underline{p}_T$.
  Then, plugging the definition \eqref{eq:qT} of $q_T$ into \eqref{eq:GT}, we get $\cRproj{k}{T} \GT \underline{q}_T = \cRproj{k}{T} \GT \underline{p}_T$.
  These equalities give
  \begin{equation}\label{eq:GT.q=GT.p}
    \GT \underline{q}_T = \GT \underline{p}_T
    \qquad \forall F\in\Th.
  \end{equation}
  Gathering \eqref{eq:GE.q=GE.p}, \eqref{eq:GF.q=GF.p}, and \eqref{eq:GT.q=GT.p}, and recalling the definition \eqref{eq:uGh} of the discrete gradient \eqref{eq:uGh.q=uGh.p} follows.
  \medskip\\
  \underline{2. \emph{Continuity.}}
  Using the fact that, for all $T \in \Th$, $h_Y \lesssim h_T$ for all $Y \in \FT \cup \ET$ and that the number of faces of each element and of edges of each face is $\lesssim 1$ by mesh regularity, we have
  \[
  \sum_{T\in\Th} h_T \sum_{F\in\FT} h_F \sum_{E\in\EF} h_E \sum_{V\in\VE} |q_V|^2
  \lesssim \sum_{T\in\Th} h_T^3 \sum_{V\in\VT} |q_V|^2
  \overset{\eqref{eq:Whitney.V:poincare}}\lesssim \sum_{T\in\Th} h_T \sum_{E\in\ET} |\jump{E}{q_V}|^2,
  \]
  where, to apply Theorem \ref{thm:Whitney.V}, we have used the fact that vertex values are defined up to a constant.
  Taking absolute values in \eqref{eq:qV} and using a Cauchy--Schwarz inequality in the right-hand side, on the other hand, we obtain $|\jump{E}{q_V}| \lesssim h_E^{\frac12} \norm{L^2(E)}{\GE \underline{p}_E} \le h_T^{\frac12} \norm{L^2(E)}{\GE \underline{p}_E}$ for all $T \in \Th$ such that $E \in \ET$.
  Plugging this bound into the previous expression, we obtain
  \begin{equation}\label{eq:inverse.uGh:estimate.V}
    \sum_{T\in\Th} h_T \sum_{F\in\FT} h_F \sum_{E\in\EF} h_E \sum_{E\in\VE} |q_V|^2
    \lesssim \sum_{T\in\Th} h_T^2 \sum_{E\in\ET} \norm{L^2(E)}{\GE \underline{p}_E}^2
    \lesssim \tnorm{\CURL,h}{\uGh \underline{p}_h}^2,
  \end{equation}
  where the last inequality is obtained recalling the definition \eqref{eq:tnorm.curl.h} of $\tnorm{\CURL,h}{{\cdot}}$ with $\uvec{v}_h = \uGh \underline{p}_h$ and using mesh regularity.

  Taking in \eqref{eq:qE} $r$ such that $r' = q_E$, using Cauchy--Schwarz and trace inequalities in the right-hand side, simplifying, and squaring, we obtain
  \[
  \norm{L^2(E)}{q_E}^2
  \lesssim h_E^2 \norm{L^2(E)}{\GE\underline{p}_E}^2
  + h_E \sum_{V\in\VE} |q_V|^2,
  \]
  so that, using the fact that $h_E \le h$ for all $E\in\Eh$ in the first term,
  \begin{equation}\label{eq:inverse.uGh:estimate.E}
    \begin{aligned}
      \sum_{T\in\Th} h_T \sum_{F\in\FT} h_F \sum_{E\in\EF} \norm{L^2(E)}{q_E}^2
      &\lesssim
      h^2 \sum_{T\in\Th} h_T \sum_{F\in\FT} h_F \sum_{E\in\EF} \norm{L^2(E)}{\GE\underline{p}_E}^2
      \\
      &\quad
      + \sum_{T\in\Th} h_T \sum_{F\in\FT} h_F \sum_{E\in\EF} h_E \sum_{V\in\VE} |q_V|^2
      \\
      \overset{\eqref{eq:tnorm.curl.h},\,\eqref{eq:inverse.uGh:estimate.V}}&\lesssim
      \tnorm{\CURL,h}{\uGh \underline{p}_h}^2,
    \end{aligned}
  \end{equation}
  where, in the last inequality, we have additionally used the fact that $h \le \diam(\Omega) \lesssim 1$.

  To estimate the face components, we first take in \eqref{eq:qF} $\bvec{v}$ such that $\DIV_F\bvec{v} = q_F$ (this is possible since $\DIV_F:\cRoly{k}(F)\to\Poly{k-1}(F)$ is an isomorphism by \cite[Corollary 7.3]{Arnold:18}) use Cauchy--Schwarz and trace inequalities in the right-hand side to obtain
  \[
  \norm{L^2(F)}{q_F}^2
  \lesssim h_F^2 \norm{\bvec{L}^2(F;\Real^2)}{\GF\underline{p}_F}^2
  + h_F \sum_{E\in\EF} \norm{L^2(E)}{q_E}^2,
  \]
  so that, using the fact that $h_F \le h$ for all $F\in\Fh$ in the first term,
  \begin{equation}\label{eq:inverse.uGh:estimate.F}
    \begin{aligned}
      \sum_{T\in\Th} h_T \sum_{F\in\FT} \norm{L^2(F)}{q_F}^2
      &\lesssim h^2 \sum_{T\in\Th} h_T \sum_{F\in\FT} \norm{\bvec{L}^2(F;\Real^2)}{\GF\underline{p}_F}^2
      \\
      &\quad
      + \sum_{T\in\Th} h_T \sum_{F\in\FT} h_F \sum_{E\in\EF} \norm{L^2(E)}{q_E}^2
      \overset{\eqref{eq:tnorm.curl.h},\,\eqref{eq:inverse.uGh:estimate.E}}\lesssim
      \tnorm{\CURL,h}{\uGh \underline{p}_h}^2,
    \end{aligned}
  \end{equation}
    where we have again used the fact that $h \lesssim 1$ for the first term.

  The estimate of the element components is entirely similar: taking in \eqref{eq:qT} $\bvec{v} \in \cRoly{k}(T)$ such that $\DIV \bvec{v} = q_T$ 
  (which is possible since $\DIV : \cRoly{k}(T) \to \Poly{k-1}(T)$ is an isomorphism again by \cite[Corollary 7.3]{Arnold:18}),
  we obtain $\norm{L^2(T)}{q_T}^2 \lesssim h_T^2 \norm{\bvec{L}^2(T;\Real^3)}{\GT \underline{p}_T}^2 + h_T \sum_{F\in\FT} \norm{L^2(F)}{q_F}^2$. 
  Summing over $T\in\Th$, using the fact that $h_T \le h\lesssim 1$ for all $T\in\Th$ for the first term and \eqref{eq:inverse.uGh:estimate.F} for the second, 
  and recalling the definition \eqref{eq:tnorm.curl.h} of $\tnorm{\CURL,h}{{\cdot}}$, we arrive at
  \begin{equation}\label{eq:inverse.uGh:estimate.T}
    \sum_{T\in\Th} \norm{L^2(T)}{q_T}^2
    \lesssim \tnorm{\CURL,h}{\uGh \underline{p}_h}^2.
  \end{equation}

  Summing \eqref{eq:inverse.uGh:estimate.V},
  \eqref{eq:inverse.uGh:estimate.E},
  \eqref{eq:inverse.uGh:estimate.F}, and
  \eqref{eq:inverse.uGh:estimate.T}, the continuity bound in \eqref{eq:uGh.inverse} follows.
\end{proof}

\begin{proof}[Proof of Theorem \ref{thm:uGh.poincare}]
  Let $\underline{q}_h \in \Xgrad{h}$ be given by Lemma \ref{lem:uGh.inverse}.
  Noticing that $\underline{p}_h - \underline{q}_h \in \Ker \uGh$, we write
  \[
  \inf_{\underline{r}_h \in \Ker \uGh} \tnorm{\GRAD,h}{\underline{p}_h - \underline{r}_h}
  \le \tnorm{\GRAD,h}{\underline{p}_h - (\underline{p}_h - \underline{q}_h)}
  = \tnorm{\GRAD,h}{\underline{q}_h}
  \overset{\eqref{eq:uGh.inverse}}\lesssim \tnorm{\CURL,h}{\uGh \underline{p}_h}.\qedhere
  \]
\end{proof}


\subsection{Poincar\'e inequality for the curl}\label{sec:uCh.poincare}

The proof of Theorem \ref{thm:uCh.poincare} is analogous to that of Theorem \ref{thm:uGh.poincare} provided we establish the following result.

\begin{lemma}[Continuous inverse of the discrete curl]
  For any $\uvec{v}_h \in \Xcurl{h}$, there is $\uvec{z}_h \in \Xcurl{h}$ such that
  \begin{equation}\label{eq:uCh.inverse}
    \text{%
      $\uCh \uvec{z}_h = \uCh \uvec{v}_h$ and
      $\tnorm{\CURL,h}{\uvec{z}_h} \lesssim \tnorm{\DIV,h}{\uCh \uvec{v}_h}$.
    }
  \end{equation}
\end{lemma}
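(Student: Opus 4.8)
The plan is to mimic the structure of Lemma~\ref{lem:uGh.inverse}: give an explicit construction of $\uvec{z}_h$ component by component, starting from the lowest-dimensional mesh entities (edges) and working up through faces to elements, at each stage using an $L^2$-orthogonal-type defining relation so that the higher-order parts of the curl are reproduced ``for free'' and only the lowest-order (topological) content has to be handled separately. Concretely, I would define the edge components $z_E$ by prescribing their $L^2$-orthogonal projections onto $\Poly[0]{k}(E)$ via the relation that makes $\CF\uvec{z}_F = \CF\uvec{v}_F$ hold up to the constant part of the face curl, then define $\bvec{z}_{\cvec{R},F}$ and $\bvec{z}_{\cvec{R},F}^\compl$ by inverting the appropriate isomorphisms ($\VROT_F$ on $\Poly[0]{k+1}(F)$ and the identity on $\cRoly{k}(F)$) against the defining relations~\eqref{eq:CF} so that $\CF\uvec{z}_F = \CF\uvec{v}_F$ and $\trFt\uvec{z}_F$ matches what is needed, and similarly define $\bvec{z}_{\cvec{R},T}$, $\bvec{z}_{\cvec{R},T}^\compl$ at the element level by inverting $\CURL:\cRoly{k+1}(T)\to\Roly{k}(T)$ and the identity on $\cRoly{k}(T)$ against~\eqref{eq:CT}.

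The first half of the proof, ``equality of the discrete curl'' $\uCh\uvec{z}_h=\uCh\uvec{v}_h$, would then follow exactly as in Step~1 of Lemma~\ref{lem:uGh.inverse}: one invokes the appropriate complex/commutation identities (the analogues of \cite[Lemma~14]{Bonaldi.Di-Pietro.ea:23}, i.e. $\int_F\CF\uvec{v}_F\,r$ controlled by the edge contributions and $\int_T\CT\uvec{v}_T\cdot\bvec{w}$ by the face contributions) to show the $\cvec{R}$- and $\cvec{G}$-projections of the curls agree, while the complementary projections agree by construction. The only genuinely new ingredient relative to the gradient case is that the constant-in-$r$ part of $\CF\uvec{v}_F$ — equivalently the quantity $\sum_{E\in\EF}\omega_{FE}\int_E v_E$ up to the $\bvec{v}_{\cvec{R},F}$ contribution — is a ``face value'' that must be distributed to edges, and the element-level constraint forces these face values to satisfy $\sum_{F\in\FT}\omega_{TF}\alpha_F=0$. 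This is precisely where the mimetic Poincar\'e inequality for collections of edge values, Theorem~\ref{thm:Whitney.E}, enters: I would package the constant parts of $\CF\uvec{v}_F$ (shifted by the lowest-order content of $\uCh\uvec{v}_h$, which lives in $\Poly{k}(F)$ and whose element-divergence constraint encodes exactly~\eqref{eq:Whitney.E:condition.1}) as a collection $(\alpha_F)_{F\in\Fh}$ and obtain edge values $(\alpha_E)_{E\in\Eh}$ with $\sum_{E\in\EF}\omega_{FE}\alpha_E=\alpha_F$ and the stability bound $\sum_T h_T\sum_{E\in\ET}\alpha_E^2\lesssim\sum_T h_T^{-1}\sum_{F\in\FT}\alpha_F^2$. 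These $\alpha_E$ are used to fix the $\Poly[0]{k}(E)$-orthogonal... rather, the constant-per-edge part of $z_E$.

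For the ``continuity'' half, I would proceed downstream-to-upstream as in Step~2 of Lemma~\ref{lem:uGh.inverse}: first bound $h_T\sum_{F\in\FT}h_F\sum_{E\in\EF}\norm{L^2(E)}{z_E}^2$ by splitting $z_E$ into its edge-average part (controlled by $\alpha_E$ via Theorem~\ref{thm:Whitney.E}, with $\sum_F h_F^{-1}\alpha_F^2$ in turn bounded by $\tnorm{\DIV,h}{\uCh\uvec{v}_h}^2$ because $\alpha_F$ is essentially the $L^2(F)$-content of $\CF\uvec{v}_F$ weighted appropriately) and its higher-order part (controlled directly by testing the defining relation for $z_E$, Cauchy--Schwarz, inverse and trace inequalities); then bound the face components $\norm{\bvec{L}^2(F)}{\bvec{z}_{\cvec{R},F}}$, $\norm{\bvec{L}^2(F)}{\bvec{z}_{\cvec{R},F}^\compl}$ by testing~\eqref{eq:CF} and the $\trFt$ relation with suitable $r,\bvec{w}$, using the already-established edge bounds plus $\norm{L^2(F)}{\CF\uvec{v}_F}\le\tnorm{\DIV,h}{\uCh\uvec{v}_h}$-type control; and finally bound the element components $\bvec{z}_{\cvec{R},T},\bvec{z}_{\cvec{R},T}^\compl$ by testing~\eqref{eq:CT}, using the face bounds and $\norm{\bvec{L}^2(T)}{\CT\uvec{v}_T}\lesssim\tnorm{\DIV,h}{\uCh\uvec{v}_h}$. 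Summing all contributions and absorbing powers of $h\lesssim\diam(\Omega)\lesssim1$ gives the stability estimate in~\eqref{eq:uCh.inverse}. The main obstacle, and the one point requiring care, is the correct identification of the face-value collection $(\alpha_F)$ to feed into Theorem~\ref{thm:Whitney.E}: one must check that the element-boundary constraint $\sum_{F\in\FT}\omega_{TF}\alpha_F=0$ and the void constraints $\sum_{F\in\faces{\gamma_i}}\wOF\alpha_F=0$ genuinely hold, which is where the topological hypothesis resurfaces and where the divergence-zero part of $\uCh\uvec{v}_h$ (guaranteed because $\uCh\uvec{v}_h\in\Image\uCh\subset\Ker\Dh$, or rather because $\DT\CT=0$ componentwise) must be used; and that the lowest-order normalization is chosen so that the $z_E$ so defined reproduces the constant part of $\CF\uvec{v}_F$ exactly rather than up to an uncontrolled constant.
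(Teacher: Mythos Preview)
Your plan is essentially correct and follows the same architecture as the paper's proof: construct $\uvec{z}_h$ bottom-up, use Theorem~\ref{thm:Whitney.E} for the lowest-order (topological) edge data with $\alpha_F=\int_F\CF\uvec{v}_F$, verify the element-boundary and void constraints via $\Dh\uCh=0$ and the telescoping of edge contributions on each $\gamma_i$, then cascade the $L^2$ estimates from edges to faces to elements.

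The one substantive difference is a simplification you miss. The paper takes $z_E\in\Poly{0}(E)$ (a single constant per edge with $\int_E z_E=\alpha_E$), sets $\bvec{z}_{\cvec{R},F}^\compl=\bvec{0}$ and $\bvec{z}_{\cvec{R},T}^\compl=\bvec{0}$, and defines only $\bvec{z}_{\cvec{R},F}$ and $\bvec{z}_{\cvec{R},T}$ via the relations you describe. This works because the complementary components do not enter $\uCh$ at all, and the higher-order moments of $z_E$ are absorbed by the choice of $\bvec{z}_{\cvec{R},F}$; there is no need to match $\trFt\uvec{z}_F$ to anything in particular, since $\bvec{z}_{\cvec{R},T}$ is defined against whatever $\trFt\uvec{z}_F$ turns out to be. This removes the ``split $z_E$ into average plus higher-order'' step from your continuity argument and makes the bounds for $\bvec{z}_{\cvec{R},F}^\compl,\bvec{z}_{\cvec{R},T}^\compl$ trivial. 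Also, a minor correction: the isomorphism used to define $\bvec{z}_{\cvec{R},T}$ is $\CURL:\cGoly{k}(T)\to\Roly{k-1}(T)$, not $\CURL:\cRoly{k+1}(T)\to\Roly{k}(T)$.
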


\begin{proof}
  Let
  \begin{equation}\label{eq:inverse.uCh:alpha.F}
    \alpha_F \coloneq \int_F \CF \uvec{v}_F
    \qquad\forall F \in \Fh.
  \end{equation}
  Recalling that $\Dh \uCh \uvec{v}_h = 0$ and using the definition \eqref{eq:DT} of $\DT$ it holds, for all $T \in \Th$,
  \[
  0 = \int_T \DT \uCT \uvec{v}_T
  = \sum_{F \in \FT} \omega_{TF} \int_F \CF \uvec{v}_F
  = \sum_{F \in \FT} \omega_{TF} \alpha_F,
  \]
  showing that \eqref{eq:Whitney.E:condition.1} is met.
  If the domain encapsulates $b_2 > 0$ voids, we infer using \eqref{eq:CF} for $r = 1$, that, for all integers $1 \leq i \leq b_2$, 
  \[
  \sum_{F\in\faces{\gamma_i}} \wOF \alpha_F 
  = \sum_{F\in\faces{\gamma_i}} \wOF \int_F \CF \uvec{v}_F \\ 
  = -\sum_{F\in\faces{\gamma_i}} \wOF \sum_{E\in\EF} \wFE \int_E v_E = 0,
  \]
  where we have used the fact that each edge in the summation appears exactly twice with opposite coefficients, and therefore cancels out.
  We can thus invoke Theorem \ref{thm:Whitney.E} to infer the existence of a collection of values at edges $(\alpha_E) \in \Real^{\Eh}$ satisfying \eqref{eq:Whitney.E:poincare}.
  We then take $z_E \in \Poly{0}(E)$ such that
  \begin{equation}\label{eq:z.E}
    \int_E z_E = h_E\, z_E = \alpha_E
    \qquad \forall E \in \Eh.
  \end{equation}
  For all $F \in \Fh$, the face components $\bvec{z}_{\cvec{R},F} \in \Roly{k-1}(F)$ and $\bvec{z}_{\cvec{R},F}^{\compl} \in \cRoly{k}(F)$ are selected such that
  \begin{equation}\label{eq:z.RF}
    \int_F \bvec{z}_{\cvec{R},F}\cdot\VROT_F r
    = \int_F \CF \uvec{v}_F\, r
    + \sum_{E \in \EF} \int_E z_E\, r
    \qquad\forall r \in \Poly[0]{r}(F)
  \end{equation}
  and
  \begin{equation}\label{eq:z.cRF}
    \bvec{z}_{\cvec{R},F}^{\compl} = \bvec{0}.
  \end{equation}
  Similarly, for any $T \in \Th$, the element components $\bvec{z}_{\cvec{R},T} \in \Roly{k-1}(T)$ and $\bvec{z}_{\cvec{R},T}^{\compl} \in \cRoly{k}(T)$ satisfy
  \begin{equation}\label{eq:z.RT}
    \int_T \bvec{z}_{\cvec{R},T} \cdot \CURL \bvec{w}
    = \int_T \CT \uvec{v}_T \cdot \bvec{w}
    + \sum_{F \in \FT} \omega_{TF} \int_F \trFt \uvec{z}_F \cdot (\bvec{w} \times \normal_F)
    \qquad \forall \bvec{w} \in \cGoly{k}(T)
  \end{equation}
  and
  \begin{equation}\label{eq:z.cRT}
    \bvec{z}_{\cvec{R},T}^{\compl} = \bvec{0}.
  \end{equation}
  Notice that \eqref{eq:z.RT} defines $\bvec{z}_{\cvec{R},T}$ uniquely since $\CURL : \cGoly{k}(T) \mapsto \Roly{k-1}(T)$ is an isomorphism by \cite[Corollary 7.3]{Arnold:18}.
  \medskip\\
  \underline{1. \emph{Equality of the discrete curl.}}
  By definition of $z_E$, it holds $\lproj{0}{F} \CF \uvec{z}_F = \lproj{0}{F} \CF \uvec{v}_F$ for all $F \in \Fh$.
  The equality of the higher-order components is obtained plugging \eqref{eq:z.RF} into the definition \eqref{eq:CF} of $\CF \uvec{z}_F$, which leads to $\int_F \CF \uvec{z}_F\, r = \int_F \CF \uvec{v}_F\, r$ for all $r \in \Poly[0]{k}(F)$.
  By \cite[Proposition~4]{Di-Pietro.Droniou:23*1} (which corresponds to \cite[Lemma~14]{Bonaldi.Di-Pietro.ea:23} with $(d,k) = (3,1)$), the equality of the face curls implies $\Gproj{k}{T}(\CT \uvec{z}_T) = \Gproj{k}{T} (\CT \uvec{v}_T)$.
  The equality of the projections on $\cRoly{k}(T)$ results plugging \eqref{eq:z.RT} into the definition \eqref{eq:CT} of $\CT \uvec{z}_{T}$ with test function $\bvec{w} \in \cGoly{k}(T)$.
  Gathering the above results, we obtain $\CT \uvec{z}_T = \CT \uvec{v}_{T}$ for all $T \in \Th$, from which $\uCh \uvec{z}_h = \uCh \uvec{v}_h$ follows recalling the definition \eqref{eq:uCh} of the discrete curl.
  \medskip\\
  \underline{2. \emph{Continuity.}}
  Let us now show the bound in \eqref{eq:uCh.inverse}.
  Concerning edge components, we write
  \begin{equation}\label{eq:est.z.E}
    \begin{aligned}
      \sum_{T\in\Th} h_T \sum_{F\in\FT} h_F \sum_{E\in\EF} \norm{L^2(E)}{z_E}^2
      &\lesssim
      \sum_{T\in\Th} h_T \sum_{E\in\ET} \alpha_E^2
      \\
      &\lesssim
      \sum_{T\in\Th} h_T^{-1} \sum_{F\in\FT} \left(
      \int_F \CF \uvec{v}_F
      \right)^2
      \lesssim \tnorm{\DIV,h}{\uCh \uvec{v}_h}^2,
    \end{aligned}
  \end{equation}
  where we have used \eqref{eq:z.E} along with the fact that each edge $E \in \ET$ is shared by exactly two faces in $\FT$,
  the mimetic Poincar\'e inequality \eqref{eq:Whitney.E:poincare} together with the definition \eqref{eq:inverse.uCh:alpha.F} of $\alpha_F$ in the second passage,
  and $\left|\int_F \CF \uvec{v}_F\right| \lesssim |F|^{\frac12} \norm{L^2(F)}{\CF \uvec{v}_F} \lesssim h_F \norm{L^2(F)}{\CF \uvec{v}_F} \le h_T \norm{L^2(F)}{\CF \uvec{v}_F}$ for all $F \in \Fh$ and all $T \in \Th$ to which $F$ belongs together with the definition \eqref{eq:tnorm.div.h} of $\tnorm{\DIV,h}{{\cdot}}$ to conclude.
  
  To estimate the face component, we let $r$ in \eqref{eq:z.RF} be such that $\VROT_F r = \bvec{z}_{\cvec{R},F}$ and use Cauchy--Schwarz, inverse, and trace inequalities in the right-hand side to infer $\norm{\bvec{L}^2(F;\Real^2)}{\bvec{z}_{\cvec{R},F}} \lesssim h_F \norm{L^2(F)}{\CF \uvec{v}_F} + h_F^{\frac12}\sum_{E\in\EF} \norm{L^2(E)}{z_E}$.
  Squaring the above relation and using standard inequalities for the square of a finite sum of terms, we obtain, after noticing that $h_F \le h$ for all $F \in \Fh$,
  \begin{equation}\label{eq:est.z.RF}
    \begin{aligned}
      \sum_{T\in\Th} h_T \sum_{F\in\FT} \norm{\bvec{L}^2(F;\Real^2)}{\bvec{z}_{\cvec{R},F}}^2
      &\lesssim h^2 \sum_{T\in\Th} h_T \sum_{F\in\FT} \norm{L^2(F)}{\CF \uvec{v}_F}^2
      \\
      &\quad
      + \sum_{T\in\Th} h_T \sum_{F\in\FT} h_F \sum_{E\in\EF} \norm{L^2(E)}{z_E}^2
      \lesssim \tnorm{\DIV,h}{\uCh \uvec{v}_h}^2,
    \end{aligned}
  \end{equation}
  where the conclusion follows noticing that $h \le \diam(\Omega) \lesssim 1$, recalling the definition \eqref{eq:tnorm.div.h} of $\tnorm{\DIV,h}{{\cdot}}$ for the first term, and invoking \eqref{eq:est.z.E} for the second one.

  The estimate of the element component is obtained in a similar way, starting from \eqref{eq:z.RT} with $\bvec{w}$ such that $\CURL \bvec{w} = \bvec{z}_{\cvec{R},T}$, leading to
  \begin{equation}\label{eq:est.z.RT}
    \sum_{T\in\Th} \norm{\bvec{L}^2(T;\Real^3)}{\bvec{z}_{\cvec{R},T}}^2
    \lesssim \tnorm{\DIV,h}{\uCh \uvec{v}_h}^2.
  \end{equation}
  Summing \eqref{eq:est.z.E}, \eqref{eq:est.z.RF}, and \eqref{eq:est.z.RT}, recalling the definition \eqref{eq:tnorm.curl.h} of $\tnorm{\CURL,h}{\uvec{z}_h}$ as well as \eqref{eq:z.cRF} and \eqref{eq:z.cRT}, the bound in \eqref{eq:uCh.inverse} follows.
\end{proof}


\subsection{Poincar\'e inequality for the divergence}\label{sec:Dh.poincare}

Theorem \ref{thm:Dh.poincare} is established in the same way as Theorem \ref{thm:uGh.poincare} (see the end of Section \ref{sec:uGh.poincare}) starting from the following result.

\begin{lemma}[Continuous inverse of the discrete divergence]
  For any $\uvec{w}_h \in \Xdiv{h}$, there is $\uvec{z}_h \in \Xdiv{h}$ such that
  \begin{equation}\label{eq:Dh.inverse}
    \text{%
      $\Dh\uvec{z}_h = \Dh\uvec{w}_h$
      and $\tnorm{\DIV,h}{\uvec{z}_h} \lesssim \norm{L^2(\Omega)}{\Dh\uvec{w}_h}$.
    }
  \end{equation}
\end{lemma}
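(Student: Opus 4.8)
The plan is to mimic the structure of Lemma~\ref{lem:uGh.inverse} and of the continuous inverse of the discrete curl, building $\uvec{z}_h$ explicitly from the bottom of the complex upward and invoking the appropriate mimetic Poincar\'e inequality to control the lowest-order data. First I would set $\alpha_T \coloneq \int_T \DT \uvec{w}_T = \int_T \Dh\uvec{w}_h$ for all $T \in \Th$: this collection of element values is exactly the data for Theorem~\ref{thm:Whitney.F}. Applying that theorem produces a collection $(\alpha_F)_{F\in\Fh} \in \Real^{\Fh}$ with $\sum_{F\in\FT}\omega_{TF}\alpha_F = \alpha_T$ for all $T$ and the stability bound $\sum_{T\in\Th} h_T^{-1}\sum_{F\in\FT}\alpha_F^2 \lesssim \sum_{T\in\Th} h_T^{-3}\alpha_T^2$. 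I would then define the lowest-order face unknowns of $\uvec{z}_h$ by $\int_F z_F = |F|\,\lproj{0}{F} z_F = \alpha_F$ (so $z_F = \alpha_F/|F|$, with $|F| \simeq h_F^2$), and take the higher-order face components and the $\cvec{G}$-element components to match those of $\DT$ in the appropriate projections, by analogy with \eqref{eq:z.RF}--\eqref{eq:z.cRT}: namely $\bvec{z}_{\cvec{G},T}^{\compl} = \bvec{0}$, the face part $w_F$ beyond its mean chosen so that $\lproj{k}{F} w_F$ on the relevant test functions reproduces the corresponding piece of $\DT\uvec{w}_T$, and $\bvec{z}_{\cvec{G},T} \in \Goly{k-1}(T)$ defined (uniquely, since $\GRAD : \Poly[0]{k}(T) \to \Goly{k-1}(T)$ is an isomorphism by \cite[Corollary~7.3]{Arnold:18}) through \eqref{eq:DT} so that $\int_T \bvec{z}_{\cvec{G},T}\cdot\GRAD q$ reproduces the high-order content of $\int_T \DT\uvec{w}_T\,q$ after subtracting the already-fixed face terms.

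The argument then splits into the two usual parts. For the equality $\Dh\uvec{z}_h = \Dh\uvec{w}_h$, I would verify that $\DT\uvec{z}_T = \DT\uvec{w}_T$ elementwise by testing \eqref{eq:DT} against $q \in \Poly{k}(T)$: the constant mode of $q$ is handled by $\sum_{F\in\FT}\omega_{TF}\alpha_F = \alpha_T$ (this is exactly $\int_T\DT\uvec{z}_T = \int_T\DT\uvec{w}_T$), and the higher-order modes are handled by the construction of $w_F$ and $\bvec{z}_{\cvec{G},T}$, since $\bvec{z}_{\cvec{G},T}^{\compl} = \bvec{0}$ contributes nothing to $\int_T \bvec{w}_{\cvec{G},T}\cdot\GRAD q$. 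For the stability bound, I would estimate each group of components of $\tnorm{\DIV,h}{\uvec{z}_h}$ in turn: the contribution $\sum_{T\in\Th} h_T\sum_{F\in\FT}\norm{L^2(F)}{z_F}^2$ is controlled by $\sum_{T\in\Th} h_T\sum_{F\in\FT} h_F^{-2}\alpha_F^2 \simeq \sum_{T\in\Th}h_T^{-1}\sum_{F\in\FT}\alpha_F^2 \overset{\eqref{eq:Whitney.F:poincare}}\lesssim \sum_{T\in\Th} h_T^{-3}\alpha_T^2$, and $|\alpha_T| = \left|\int_T\DT\uvec{w}_T\right| \lesssim |T|^{\frac12}\norm{L^2(T)}{\DT\uvec{w}_T} \simeq h_T^{\frac32}\norm{L^2(T)}{\DT\uvec{w}_T}$ gives $h_T^{-3}\alpha_T^2 \lesssim \norm{L^2(T)}{\DT\uvec{w}_T}^2$, so this group is $\lesssim \norm{L^2(\Omega)}{\Dh\uvec{w}_h}^2$. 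The higher-order face term $w_F$ beyond its mean is bounded by a trace/inverse estimate in terms of $\norm{L^2(T)}{\DT\uvec{w}_T}$ and the already-controlled mean, and the $\bvec{z}_{\cvec{G},T}$ term is bounded by testing its defining relation against $q$ with $\GRAD q = \bvec{z}_{\cvec{G},T}$, using Cauchy--Schwarz and a trace inequality on the face terms exactly as in \eqref{eq:est.z.RT}. Summing all groups and recalling $\bvec{z}_{\cvec{G},T}^{\compl}=\bvec{0}$ yields \eqref{eq:Dh.inverse}.

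The main obstacle I anticipate is getting the definition of the higher-order face unknowns $w_F$ right so that both the exactness and the stability go through cleanly: unlike the gradient and curl cases, here $w_F \in \Poly{k}(F)$ is a genuine face polynomial (not split into $\cvec{R}$-type pieces), and only its mean is pinned down by the topological balance, while its remaining degrees of freedom must be chosen to reproduce the correct part of $\DT\uvec{w}_T$ and simultaneously admit a trace bound by $h_T^{\frac12}\norm{L^2(T)}{\DT\uvec{w}_T}$ up to the mean contribution. Care is also needed because a single face $F$ may be shared by two elements, so $w_F$ must be consistently defined from one side; I expect this is resolved, as in the curl proof, by reading off the needed constraints from \eqref{eq:DT} and checking they are compatible across the shared face, with the bulk of the work being the routine trace/inverse estimates rather than any genuinely new idea.
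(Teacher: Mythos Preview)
Your overall strategy is correct and matches the paper's: apply Theorem~\ref{thm:Whitney.F} with $\alpha_T = \int_T \DT\uvec{w}_T$ to produce face values $(\alpha_F)$, use these to build the face components of $\uvec{z}_h$, define $\bvec{z}_{\cvec{G},T}$ from the higher-order content of \eqref{eq:DT}, set $\bvec{z}_{\cvec{G},T}^\compl = \bvec{0}$, and estimate each group by Cauchy--Schwarz and trace/inverse inequalities. Your stability computations for the constant face part and for $\bvec{z}_{\cvec{G},T}$ are exactly right.

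The ``main obstacle'' you identify, however, is one you have created yourself. In the paper, $z_F$ is simply taken to be the \emph{constant} $\alpha_F/|F| \in \Poly{0}(F) \subset \Poly{k}(F)$; there is no higher-order face content at all. The entire higher-order part of $\DT\uvec{w}_T$ (its action on $q \in \Poly[0]{k}(T)$) is absorbed into $\bvec{z}_{\cvec{G},T}$ via
\[
\int_T \bvec{z}_{\cvec{G},T}\cdot\GRAD q
= -\int_T \DT\uvec{w}_T\,q + \sum_{F\in\FT}\omega_{TF}\int_F z_F\,q
\qquad\forall q \in \Poly[0]{k}(T),
\]
well-posed since $\GRAD:\Poly[0]{k}(T)\to\Goly{k-1}(T)$ is an isomorphism. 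This is the direct analogue of the curl proof, where $z_E \in \Poly{0}(E)$ is likewise constant and the higher-order work is done one level up by $\bvec{z}_{\cvec{R},F}$ and $\bvec{z}_{\cvec{R},T}$; here $\Xdiv{h}$ has one fewer layer, so only $\bvec{z}_{\cvec{G},T}$ is needed. Your plan to fix ``the face part $w_F$ beyond its mean'' from the element data $\DT\uvec{w}_T$ would indeed run into the compatibility problem across shared faces that you flag, and your hoped-for resolution by analogy with the curl proof does not apply (there, $z_E$ is \emph{not} determined by face data). But the whole detour is unnecessary: once $z_F$ is constant, the consistency issue vanishes and the estimate $\norm{L^2(F)}{z_F}^2 = |F|^{-1}\alpha_F^2$ feeds directly into \eqref{eq:Whitney.F:poincare} exactly as you wrote.
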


\begin{proof}
  The face components of $\uvec{z}_h$ are obtained applying Theorem \ref{thm:Whitney.F} with $\alpha_T = \int_T\DT\uvec{w}_T$ for all $T\in\Th$ and letting $z_F \in \Poly{0}(F)$ be such that
  \begin{equation}\label{eq:z.F}
    \int_F z_F = |F|\, z_F =\alpha_F
    \qquad \forall F\in\Fh.
  \end{equation}
  For all $T\in\Th$, the element component $\bvec{z}_{\cvec{G},T} \in \Goly{k-1}(T)$ is defined by the following relation:
  \begin{equation}\label{eq:z.GT}
    \int_T \bvec{z}_{\cvec{G},T}\cdot\GRAD q
    = - \int_T \DT\uvec{w}_T\,q
    + \sum_{F\in\FT}\omega_{TF}\int_F z_F\,q
    \qquad\forall q\in\Poly[0]{k}(T).
  \end{equation}
  Finally, we set
  \begin{equation}\label{eq:z.cGT}
    \bvec{z}_{\cvec{G},T}^{\compl} = \bvec{0}
    \qquad \forall T\in\Th.
  \end{equation}
  \\
  \underline{1. \emph{Equality of the discrete divergence.}}
  To check the first condition in \eqref{eq:Dh.inverse}, it suffices to show that $\DT\uvec{z}_T = \DT\uvec{w}_T$ for a generic $T\in\Th$.
  To this end, we start by noticing that
  \[
  \int_T \DT \uvec{w}_T
  = \alpha_T
  = \sum_{F\in\FT}\omega_{TF}\alpha_F
  = \sum_{F\in\FT}\omega_{TF}\int_F z_F
  = \int_T \DT \uvec{z}_T,
  \]
  showing that $\lproj{0}{T}( \DT \uvec{z}_T ) = \lproj{0}{T}( \DT \uvec{w}_T )$.
  To show the equivalence of the higher-order components, it suffices to use \eqref{eq:z.GT} in \eqref{eq:DT} written for $\uvec{z}_T$ to infer $\int_T \DT \uvec{z}_T\,q = \int_T \DT \uvec{w}_T\,q$ for all $q\in\Poly[0]{k}(T)$.
  \medskip\\
  \underline{2. \emph{Continuity.}}
  Let us now show the continuity bound in \eqref{eq:Dh.inverse}.
  Observing that, by \eqref{eq:z.F}, for all $F \in \Fh$ it holds $\norm{L^2(F)}{z_F}^2 = |F|\, z_F^2 = |F|^{-1} \alpha_F^2 \lesssim h_T^{-2} \alpha_F^2$ for all $T \in\Th$ such that $F \in \FT$ (the last inequality being a consequence of mesh regularity), we have
  \begin{equation}\label{eq:est.z.F}
    \begin{aligned}
    \sum_{T\in\Th} h_T \sum_{F\in\FT} \norm{L^2(F)}{z_F}^2
    &\lesssim \sum_{T\in\Th} h_T^{-1} \sum_{F\in\FT} \alpha_F^2
    \overset{\eqref{eq:Whitney.F:poincare}}\lesssim \sum_{T\in\Th} h_T^{-3} \alpha_T^2
    \\
    &\lesssim \sum_{T\in\Th} h_T^{-3}\, |T|\, \norm{L^2(T)}{\lproj{0}{T}(\DT\uvec{w}_T)}^2
    \lesssim \norm{L^2(\Omega)}{\Dh\uvec{w}_h}^2,
    \end{aligned}
  \end{equation}
  where, to pass to the second line, we have used the mesh regularity assumption to infer $h_T^{-3}\, |T| \lesssim 1$.
  To estimate the element components, we take $q$ in \eqref{eq:z.GT} such that $\GRAD q = \bvec{z}_{\cvec{G},T}$, use Cauchy--Schwarz, trace, and inverse inequalities in the right-hand side, pass to the square and use standard inequalities for the square of a finite sum of terms to obtain
  \[
  \norm{\bvec{L}^2(T;\Real^3)}{\bvec{z}_{\cvec{G},T}}^2
  \lesssim h_T^2 \norm{L^2(T)}{\DT\uvec{w}_T}^2
  + h_T \sum_{F\in\FT} \norm{L^2(F)}{z_F}^2.
  \]
  Summing the above relation over $T\in\Th$, using the fact that $h_T \le \diam(\Omega) \lesssim 1$ for the first term in the right-hand side and \eqref{eq:est.z.F} for the second, we obtain
  \begin{equation}\label{eq:est.z.GT}
    \sum_{T\in\Th} \norm{\bvec{L}^2(T;\Real^3)}{\bvec{z}_{\cvec{G},T}}^2
    \lesssim \norm{L^2(\Omega)}{\Dh\uvec{w}_h}^2.
  \end{equation}
  Summing \eqref{eq:est.z.F} to \eqref{eq:est.z.GT} and recalling the definition \eqref{eq:tnorm.div.h} of $\tnorm{\DIV,h}{{\cdot}}$ along with \eqref{eq:z.cGT}  yields the inequality in \eqref{eq:Dh.inverse}.
\end{proof}


\section{Stability analysis of a DDR scheme for the magnetostatics problem} \label{sec:vectorLaplace}

We apply the Poincar\'e inequalities stated in Section \ref{sec:main.results} to the stability analysis of a DDR scheme for the magnetostatics problem which generalises the one presented in \cite{Di-Pietro.Droniou:21} to domains with non-trivial topology. 
We introduce the space of discrete harmonic forms 
\[
\HXdiv \coloneq \left\lbrace
\uvec{w}_h \in \Xdiv{h} \st
\text{%
  $\Dh \uvec{w}_h = 0$ and
  $(\uvec{w}_h, \uCh \uvec{v}_h)_{\DIV,h} = 0$ for all $\uvec{v}_h \in \Xcurl{h}$
}
\right\rbrace.
\]
For a given source term $\bvec{f} \in \bvec{H}^1(\Omega;\Real^3)$, we consider the following DDR approximation of the magnetostatics problem:
Find $(\uvec{\sigma}_h,\uvec{u}_h,\uvec{p}_h) \in \Xcurl{h} \times \Xdiv{h} \times \HXdiv$ such that, for all
$ (\uvec{\tau}_h,\uvec{v}_h,\uvec{q}_h) \in \Xcurl{h} \times \Xdiv{h} \times \HXdiv$, 
\[ 
A_h((\uvec{\sigma}_h,\uvec{u}_h,\uvec{p}_h), (\uvec{\tau}_h,\uvec{v}_h,\uvec{q}_h))
= (\uvec{I}^k_{\DIV,h} \bvec{f}, \uvec{v}_h)_{\DIV,h},
\] 
where the bilinear form $A_h:\big[\Xcurl{h} \times \Xdiv{h} \times \HXdiv\big]^2 \to \Real$ is given by:
\begin{equation} \label{eq:HL.defA}
  \begin{aligned}
    A_h((\uvec{\sigma}_h,\uvec{u}_h,\uvec{p}_h), (\uvec{\tau}_h,\uvec{v}_h,\uvec{q}_h))
    &\coloneq
    (\uvec{\sigma}_h, \uvec{\tau}_h)_{\CURL,h}
    - (\uvec{u}_h, \uCh \uvec{\tau}_h)_{\DIV,h}
    + (\uCh \uvec{\sigma}_h, \uvec{v}_h)_{\DIV,h}
    \\
    &\quad
    + (\Dh \uvec{u}_h, \Dh \uvec{v}_h)_{L^2(\Omega)}
    + (\uvec{p}_h,\uvec{v}_h)_{\DIV,h} + (\uvec{u}_h,\uvec{q}_h)_{\DIV,h},
  \end{aligned}
\end{equation}
with discrete $L^2$-like products $(\cdot,\cdot)_{\CURL,h}$ and $(\cdot,\cdot)_{\DIV,h}$ defined as in \cite[Section 4.4]{Di-Pietro.Droniou:23*1}.
We define the graph norm on this product space as
\begin{equation}\label{eq:norm.h}
  \norm{h}{(\uvec{\sigma}_h,\uvec{u}_h,\uvec{p}_h)}
  \coloneq \left(
  \norm{\CURL,h}{\uvec{\sigma}_h}^2
  + \norm{\DIV,h}{\uCh \uvec{\sigma}_h}^2
  + \norm{\DIV,h}{\uvec{u}_h}^2
  + \norm{L^2(\Omega)}{\Dh \uvec{u}_h}^2
  + \norm{\DIV,h}{\uvec{p}_h}^2
  \right)^{\frac12},
\end{equation}
with norms $\norm{\CURL,h}{{\cdot}} \coloneq (\cdot,\cdot)_{\CURL,h}^{\frac12}$ on $\Xcurl{h}$ and $\norm{\DIV,h}{{\cdot}} \coloneq (\cdot,\cdot)_{\DIV,h}^{\frac12}$ on $\Xdiv{h}$ induced by the corresponding discrete $L^2$-products.

\begin{theorem}[Stability of the discrete bilinear form]
  The discrete bilinear form \eqref{eq:HL.defA} is inf-sup stable for the graph norm $\norm{h}{{\cdot}}$, i.e., for all $(\uvec{\sigma}_h,\uvec{u}_h,\uvec{p}_h) \in \Xcurl{h} \times \Xdiv{h} \times \HXdiv$, it holds
  \begin{equation}\label{eq:inf-sup}
    \tnorm{h}{(\uvec{\sigma}_h,\uvec{u}_h,\uvec{p}_h)}
    \lesssim
    \sup_{(\uvec{\tau}_h,\uvec{v}_h,\uvec{q}_h) \in \Xcurl{h} \times \Xdiv{h} \times \HXdiv \setminus \{0\}}\frac{%
      A_h((\uvec{\sigma}_h,\uvec{u}_h,\uvec{p}_h), (\uvec{\tau}_h,\uvec{v}_h,\uvec{q}_h))
    }{%
      \tnorm{h}{(\uvec{\tau}_h,\uvec{v}_h,\uvec{q}_h)}
    }.
  \end{equation}
\end{theorem}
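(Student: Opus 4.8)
The plan is to establish the inf-sup condition \eqref{eq:inf-sup} by the standard technique of constructing, for any fixed triple $(\uvec{\sigma}_h,\uvec{u}_h,\uvec{p}_h)$, an explicit test triple $(\uvec{\tau}_h,\uvec{v}_h,\uvec{q}_h)$ whose norm is controlled by $\tnorm{h}{(\uvec{\sigma}_h,\uvec{u}_h,\uvec{p}_h)}$ and for which $A_h((\uvec{\sigma}_h,\uvec{u}_h,\uvec{p}_h),(\uvec{\tau}_h,\uvec{v}_h,\uvec{q}_h)) \gtrsim \tnorm{h}{(\uvec{\sigma}_h,\uvec{u}_h,\uvec{p}_h)}^2$. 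The first, cheapest ingredient is obtained by testing with $(\uvec{\tau}_h,\uvec{v}_h,\uvec{q}_h) = (\uvec{\sigma}_h,\uvec{u}_h,-\uvec{p}_h)$: using the antisymmetry of the two coupling terms $-(\uvec{u}_h,\uCh\uvec{\tau}_h)_{\DIV,h}$ and $(\uCh\uvec{\sigma}_h,\uvec{v}_h)_{\DIV,h}$, and the cancellation of $(\uvec{p}_h,\uvec{v}_h)_{\DIV,h}$ against $(\uvec{u}_h,\uvec{q}_h)_{\DIV,h}$, this yields exactly $\norm{\CURL,h}{\uvec{\sigma}_h}^2 + \norm{L^2(\Omega)}{\Dh\uvec{u}_h}^2$. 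So it remains to recover control of $\norm{\DIV,h}{\uCh\uvec{\sigma}_h}^2$, $\norm{\DIV,h}{\uvec{u}_h}^2$, and $\norm{\DIV,h}{\uvec{p}_h}^2$.

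The key steps, in order, are: (i) \emph{Recover $\norm{\DIV,h}{\uvec{p}_h}$.} Since $\uvec{p}_h \in \HXdiv$, test with $\uvec{v}_h = \uvec{p}_h$ and the other components zero; the only surviving contributions are $(\uCh\uvec{\sigma}_h,\uvec{p}_h)_{\DIV,h} + (\Dh\uvec{u}_h,\Dh\uvec{p}_h)_{L^2} + \norm{\DIV,h}{\uvec{p}_h}^2 + (\uvec{u}_h,\uvec{p}_h)_{\DIV,h}$; by definition of $\HXdiv$, $\Dh\uvec{p}_h = 0$, $(\uvec{p}_h,\uCh\uvec{\sigma}_h)_{\DIV,h} = 0$, and $(\uvec{u}_h,\uvec{p}_h)_{\DIV,h}$ needs to be absorbed using Young's inequality against the already-controlled quantities — but in fact the orthogonality defining $\HXdiv$ is against $\uCh\Xcurl{h}$, so this term survives and must be bounded by $\norm{\DIV,h}{\uvec{u}_h}\norm{\DIV,h}{\uvec{p}_h}$, to be cleaned up once $\norm{\DIV,h}{\uvec{u}_h}$ is controlled. (ii) \emph{Recover $\norm{\DIV,h}{\uCh\uvec{\sigma}_h}$.} Apply the discrete Hodge/Helmholtz decomposition: since $\Dh(\uCh\uvec{\sigma}_h) = 0$, write $\uCh\uvec{\sigma}_h$ as an element of $\uCh\Xcurl{h}$ plus a harmonic form, and use Theorem \ref{thm:uCh.poincare} applied to a preimage to find $\uvec{\tau}_h \in \Xcurl{h}$ with $\uCh\uvec{\tau}_h$ equal to the exact part and $\norm{\CURL,h}{\uvec{\tau}_h} \lesssim \norm{\DIV,h}{\uCh\uvec{\sigma}_h}$; testing against $(\uvec{\tau}_h,0,0)$ then produces $\norm{\DIV,h}{(\text{exact part of }\uCh\uvec{\sigma}_h)}^2$ modulo a term $(\uvec{\sigma}_h,\uvec{\tau}_h)_{\CURL,h} - (\uvec{u}_h,\uCh\uvec{\tau}_h)_{\DIV,h}$, both absorbable by Young. (iii) \emph{Recover $\norm{\DIV,h}{\uvec{u}_h}$.} Decompose $\uvec{u}_h$ into its component in $\uCh\Xcurl{h}$, a harmonic part, and a part in $(\Ker\Dh)^\perp$; for the exact part use Theorem \ref{thm:uCh.poincare} again to pick $\uvec{\tau}_h$ and test with $(0,\uCh\uvec{\tau}_h, 0)$-type test functions — wait, more carefully, one tests the $(\uCh\uvec{\sigma}_h,\uvec{v}_h)_{\DIV,h}$ slot by choosing $\uvec{v}_h$ appropriately and the coercivity-type terms — for the part not in $\Ker\Dh$, use Theorem \ref{thm:Dh.poincare} to bound it by $\norm{L^2(\Omega)}{\Dh\uvec{u}_h}$, already controlled; for the harmonic part, use $\uvec{q}_h$ in the last slot of $A_h$. (iv) Combine all pieces with suitably small Young parameters so that every term on the right is absorbed into the left.

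The main obstacle is step (iii): controlling $\norm{\DIV,h}{\uvec{u}_h}$ requires carefully splitting $\uvec{u}_h$ along the discrete Hodge decomposition $\Xdiv{h} = \uCh\Xcurl{h} \oplus \HXdiv \oplus \mathfrak{Z}_h$, where $\mathfrak{Z}_h$ is the $L^2$-orthogonal complement of $\Ker\Dh$ in $\Xdiv{h}$, and handling each summand with a different tool — Theorem \ref{thm:uCh.poincare} for the exact part (via a continuous right inverse of $\uCh$, i.e. the discrete curl analogue of the commuting-inverse argument already used), the harmonic-form test slot $(\uvec{u}_h,\uvec{q}_h)_{\DIV,h}$ for the harmonic part, and Theorem \ref{thm:Dh.poincare} for the co-exact part. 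Keeping track of the cross terms generated by each choice of test function and verifying they can all be absorbed without circular dependence requires bookkeeping but no genuinely new idea; the Poincaré inequalities of Section \ref{sec:main.results} do all the heavy lifting. A clean way to organize this is to first prove, as an intermediate lemma, that $\tnorm{h}{(\uvec{\sigma}_h,\uvec{u}_h,\uvec{p}_h)} \lesssim \norm{\CURL,h}{\uvec{\sigma}_h} + \norm{L^2(\Omega)}{\Dh\uvec{u}_h} + \sup_{\uvec{v}_h}\frac{(\uCh\uvec{\sigma}_h - \uvec{p}_h, \uvec{v}_h)_{\DIV,h} + \ldots}{\norm{\DIV,h}{\uvec{v}_h}}$, reducing the three-field inf-sup to a two-field saddle-point estimate of Brezzi type, whose inf-sup and coercivity-on-the-kernel conditions are precisely Theorems \ref{thm:uCh.poincare} and \ref{thm:Dh.poincare}.
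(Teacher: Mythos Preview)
Your overall strategy---construct an explicit test triple via the discrete Hodge decomposition of $\uvec{u}_h$ and the continuous-inverse lemmas behind Theorems~\ref{thm:uCh.poincare} and~\ref{thm:Dh.poincare}---is exactly what the paper does. The paper packages everything into a single test triple
\[
\uvec{\tau}_h = 2C_{\rm P}^2\,\uvec{\sigma}_h - \uvec{\alpha}_h',\qquad
\uvec{v}_h = \uCh\uvec{\sigma}_h + \uvec{p}_h + 2C_{\rm P}^2\,\uvec{u}_h,\qquad
\uvec{q}_h = \uvec{\beta}_h - 2C_{\rm P}^2\,\uvec{p}_h,
\]
where $\uvec{u}_h = \uvec{z}_h + \uCh\uvec{\alpha}_h + \uvec{\beta}_h$ with $\uvec{z}_h$ from \eqref{eq:Dh.inverse}, $\uvec{\beta}_h\in\HXdiv$, and $\uvec{\alpha}_h'$ from \eqref{eq:uCh.inverse}; your modular version would combine to the same thing.

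However, your step (ii) is wrong as written. Testing against $(\uvec{\tau}_h,0,0)$ with $\uCh\uvec{\tau}_h = \uCh\uvec{\sigma}_h$ yields
\[
(\uvec{\sigma}_h,\uvec{\tau}_h)_{\CURL,h} - (\uvec{u}_h,\uCh\uvec{\sigma}_h)_{\DIV,h},
\]
and neither term is $\norm{\DIV,h}{\uCh\uvec{\sigma}_h}^2$. The term $(\uCh\uvec{\sigma}_h,\cdot)_{\DIV,h}$ sits in the $\uvec{v}_h$ slot, not the $\uvec{\tau}_h$ slot: taking $\uvec{v}_h = \uCh\uvec{\sigma}_h$ (and the other two zero) gives $A_h = \norm{\DIV,h}{\uCh\uvec{\sigma}_h}^2$ exactly, using $\Dh\uCh = 0$ and the orthogonality of $\uvec{p}_h$ to curls. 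This is why $\uCh\uvec{\sigma}_h$ appears as a summand of the paper's $\uvec{v}_h$. The $\uvec{\tau}_h$ slot, by contrast, is where $-\uvec{\alpha}_h'$ belongs: it produces $+(\uvec{u}_h,\uCh\uvec{\alpha}_h)_{\DIV,h}$, which together with $(\uvec{u}_h,\uvec{\beta}_h)_{\DIV,h}$ from $\uvec{q}_h = \uvec{\beta}_h$ recovers $\norm{\DIV,h}{\uvec{u}_h}^2$ up to the absorbable $(\uvec{u}_h,\uvec{z}_h)_{\DIV,h}$. In short, you have swapped the roles of the $\uvec{\tau}_h$ and $\uvec{v}_h$ slots in step (ii).

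A minor point: your step (i) is cleaner than you fear. With $(\uvec{\tau}_h,\uvec{v}_h,\uvec{q}_h) = (0,\uvec{p}_h,0)$ the last term of $A_h$ is $(\uvec{u}_h,\uvec{q}_h)_{\DIV,h} = 0$, so no $(\uvec{u}_h,\uvec{p}_h)_{\DIV,h}$ survives; you get $\norm{\DIV,h}{\uvec{p}_h}^2$ on the nose. With the slot correction in (ii), the rest of your outline goes through and is equivalent to the paper's argument.
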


\begin{proof}
  Let $(\uvec{\sigma}_h,\uvec{u}_h,\uvec{p}_h)$ be given, 
  and let $C_{\rm P} > 0$ be the maximum of the hidden constants in the continuity estimates \eqref{eq:Dh.inverse} for the divergence and \eqref{eq:uCh.inverse} for the curl.
  
  Let $\uvec{z}_h \in \Xdiv{h}$ be given by \eqref{eq:Dh.inverse}, i.e., such that
  \begin{equation}\label{eq:zh}
    \text{%
      $\Dh \uvec{z}_h = \Dh \uvec{u}_h$ and $\norm{\DIV,h}{\uvec{z}_h} \leq C_{\rm P} \norm{L^2(\Omega)}{\Dh \uvec{u}_h} $.
    }
  \end{equation}
  Since $\Dh (\uvec{u}_h - \uvec{z}_h) = 0$, 
  there exists $(\uvec{\alpha}_h, \uvec{\beta}_h) \in \Xcurl{h} \times \HXdiv$ such that $\uvec{u}_h - \uvec{z}_h = \uCh \uvec{\alpha}_h + \uvec{\beta}_h$.
  Using \eqref{eq:uCh.inverse} applied to $\uvec{\alpha}_h$, we infer the existence of
  \begin{equation}\label{eq:alpha'h}
    \text{
      $\uvec{\alpha}_h'$ such that $\uCh \uvec{\alpha}_h' = \uCh \uvec{\alpha}_h$ and $\norm{\CURL,h}{\uvec{\alpha}_h'} \leq C_{\rm P} \norm{\DIV,h}{\uCh \uvec{\alpha}_h}$.
    }
  \end{equation}
  Noticing that %
    ${\norm{\DIV,h}{\uvec{u}_h - \uvec{z}_h}^2} = {\norm{\DIV,h}{\uCh \uvec{\alpha}_h}^2} + {\norm{\DIV,h}{\uvec{\beta}_h}^2}$ by orthogonality, using a triangle inequality we infer that
  \begin{equation} \label{eq:HL.bound.Cp}
    \norm{\DIV,h}{\uCh \uvec{\alpha}_h}^2
    + \norm{\DIV,h}{\uvec{\beta}_h}^2
    \lesssim \norm{\DIV,h}{\uvec{u}_h}^2
    + \norm{\DIV,h}{\uvec{z}_h}^2
    \overset{\eqref{eq:zh}}\le \norm{\DIV,h}{\uvec{u}_h}^2
    + C_{\rm P} \norm{L^2(\Omega)}{\Dh\uvec{u}_h}^2.
  \end{equation}

  We define
  \begin{equation} \label{eq:HL.def.TF}
    \uvec{\tau}_h \coloneq 2 C_{\rm P}^2 \uvec{\sigma}_h - \uvec{\alpha}_h', \quad
    \uvec{v}_h \coloneq \uCh \uvec{\sigma}_h + \uvec{p}_h + 2 C_{\rm P}^2 \uvec{u}_h , \quad 
    \uvec{q}_h \coloneq \uvec{\beta}_h - 2 C_{\rm P}^2 \uvec{p}_h.
  \end{equation}
  The following bound is readily inferred using triangle inequalities:
  \begin{equation} \label{eq:HL.bound.TF}
    \begin{aligned}
      &\norm{\CURL,h}{\uvec{\tau}_h}^2 
      + \norm{\DIV,h}{\uCh \uvec{\tau}_h}^2 
      + \norm{\DIV,h}{\uvec{v}_h}^2
      + \norm{L^2(\sigma)}{\Dh \uvec{v}_h}^2 
      + \norm{\DIV,h}{\uvec{q}_h}^2\\
      &\quad
      \begin{aligned}[t]
      &\lesssim 
      \norm{\CURL,h}{\uvec{\sigma}_h}^2 
      + \norm{\CURL,h}{\uvec{\alpha}_h'}^2 
      + \norm{\DIV,h}{\uCh \uvec{\sigma}_h}^2 
      + \norm{\DIV,h}{\uvec{p}_h}^2
      + \norm{\DIV,h}{\uvec{u}_h}^2
      + \norm{\DIV,h}{\uvec{\beta}_h}^2 \\
      \overset{\eqref{eq:alpha'h},\,\eqref{eq:HL.bound.Cp},\,\eqref{eq:norm.h}}&\lesssim       
      \tnorm{h}{(\uvec{\sigma}_h,\uvec{u}_h,\uvec{p}_h)}^2.
      \end{aligned}
    \end{aligned}
  \end{equation}
  Plugging the test functions \eqref{eq:HL.def.TF} into the expression \eqref{eq:HL.defA} of $A_h$ gives
  \begin{equation} \label{eq:HL.P.INFSUP.0}
    \begin{aligned}
      &A_h((\uvec{\sigma}_h,\uvec{u}_h,\uvec{p}_h), (\uvec{\tau}_h,\uvec{v}_h,\uvec{q}_h))
      \\
      &\quad=
      2 C_{\rm P}^2 \norm{\CURL,h}{\uvec{\sigma}_h}^2
      - (\uvec{\sigma}_h, \uvec{\alpha}_h')_{\CURL,h}
      \\
      &\qquad
      - \bcancel{2 C_{\rm P}^2 (\uvec{u}_h, \uCh \uvec{\sigma}_h)_{\DIV,h}}
       + (\uvec{u}_h, \uCh \uvec{\alpha}_h)_{\DIV,h} \\
      &\qquad
      + \norm{\DIV,h}{\uCh \uvec{\sigma}_h}^2 
      + \cancel{(\uCh \uvec{\sigma}_h, \uvec{p}_h)_{\DIV,h}} 
      + \bcancel{2 C_{\rm P}^2 (\uCh \uvec{\sigma}_h, \uvec{u}_h)_{\DIV,h}}
      \\
      &\qquad
      - (\Dh \uvec{u}_h, \cancel{\Dh \uCh \uvec{\sigma}_h})_{L^2(\Th)}
      - (\Dh \uvec{u}_h, \cancel{\Dh \uvec{p}_h})_{L^2(\Th)} 
      + 2 C_{\rm P}^2 \norm{L^2(\Omega)}{\Dh \uvec{u}_h}^2
      \\
      &\qquad
      + \cancel{(\uvec{p}_h,\uCh \uvec{\sigma}_h)_{\DIV,h} }
      + \norm{\DIV,h}{\uvec{p}_h}^2
      + \bcancel{2 C_{\rm P}^2 (\uvec{p}_h,\uvec{u}_h)_{\DIV,h}}
      \\
      &\qquad
      + (\uvec{u}_h,\uvec{\beta}_h)_{\DIV,h}
      - \bcancel{2 C_{\rm P}^2 (\uvec{u}_h,\uvec{p}_h)_{\DIV,h}} 
      \\
      &\quad=
      2 C_{\rm P}^2 \norm{\CURL,h}{\uvec{\sigma}_h}^2
      - (\uvec{\sigma}_h, \uvec{\alpha}_h')_{\CURL,h} 
      + \norm{\DIV,h}{\uCh \uvec{\sigma}_h}^2 + \norm{\DIV,h}{\uvec{p}_h}^2
      \\
      &\qquad
      + 2 C_{\rm P}^2 \norm{L^2(\Omega)}{\Dh \uvec{u}_h}^2 
      + (\uvec{u}_h, \uCh \uvec{\alpha}_h + \uvec{\beta}_h)_{\DIV,h}.
    \end{aligned}
  \end{equation}
  Using Cauchy--Schwarz and generalised Young inequalities, we have
  \begin{equation} \label{eq:HL.P.IN.1}
    (\uvec{\sigma}_h, \uvec{\alpha}_h')_{\CURL,h} 
    \overset{\eqref{eq:alpha'h}}\leq C_{\rm P} \norm{\CURL,h}{\uvec{\sigma}_h} \norm{\DIV,h}{\uCh\uvec{\alpha}_h} 
    \leq \frac{3 C_{\rm P}^2}{4} \norm{\CURL,h}{\uvec{\sigma}_h}^2 + \frac{1}{3} \norm{\DIV,h}{\uCh\uvec{\alpha}_h}^2.
  \end{equation}
  Moreover, the decomposition $\uvec{u}_h = \uvec{z}_h + \uCh \uvec{\alpha}_h + \uvec{\beta}_h$ gives 
  $(\uvec{u}_h, \uCh \uvec{\alpha}_h + \uvec{\beta}_h)_{\DIV,h} 
  = \norm{\DIV,h}{\uvec{u}_h}^2 - (\uvec{u}_h, \uvec{z}_h)_{\DIV,h}$ with
  \begin{equation} \label{eq:HL.P.IN.2}
    (\uvec{u}_h, \uvec{z}_h)_{\DIV,h}
    \leq  \norm{\DIV,h}{\uvec{u}_h} C_{\rm P}\norm{L^2(\Omega)}{\Dh\uvec{u}_h} 
    \leq \frac13 \norm{\DIV,h}{\uvec{u}_h}^2 + \frac{3C_{\rm P}^2}{4} \norm{L^2(\Omega)}{\Dh\uvec{u}_h}^2.
  \end{equation}
  Plugging \eqref{eq:HL.P.IN.1} and \eqref{eq:HL.P.IN.2} into \eqref{eq:HL.P.INFSUP.0}, we have
  \[
  \begin{aligned}
    &A_h((\uvec{\sigma}_h,\uvec{u}_h,\uvec{p}_h), (\uvec{\tau}_h,\uvec{v}_h,\uvec{q}_h))
    \\
    &\quad\geq 
    \frac{5 C_{\rm P}^2}{4} \norm{\CURL,h}{\uvec{\sigma}_h}^2 
    + \norm{\DIV,h}{\uCh \uvec{\sigma}_h}^2 + \norm{\DIV,h}{\uvec{p}_h}^2
    + \frac{5 C_{\rm P}^2}{4} \norm{L^2(\Omega)}{\Dh \uvec{u}_h}^2 
    + \frac23\norm{\DIV,h}{\uvec{u}_h}^2 - \frac13\norm{\DIV,h}{\uCh \uvec{\alpha}_h}^2
    \\
    &\quad\geq 
    \frac{5 C_{\rm P}^2}{4} \norm{\CURL,h}{\uvec{\sigma}_h}^2 
    + \norm{\DIV,h}{\uCh \uvec{\sigma}_h}^2 + \norm{\DIV,h}{\uvec{p}_h}^2
    + \frac{11 C_{\rm P}^2}{12} \norm{L^2(\Omega)}{\Dh \uvec{u}_h}^2 
    + \frac13\norm{\DIV,h}{\uvec{u}_h}^2
    \\
    &\quad\gtrsim
    \tnorm{h}{(\uvec{\sigma}_h,\uvec{u}_h,\uvec{p}_h)}^2.
  \end{aligned}
  \]
  Denoting by $\$$ the supremum in \eqref{eq:inf-sup}, we then use the previous bound to write
  \[
  \tnorm{h}{(\uvec{\sigma}_h,\uvec{u}_h,\uvec{p}_h)}^2
  \lesssim
  A_h((\uvec{\sigma}_h,\uvec{u}_h,\uvec{p}_h), (\uvec{\tau}_h,\uvec{v}_h,\uvec{q}_h))
  \le \$\, \tnorm{h}{(\uvec{\tau}_h,\uvec{v}_h,\uvec{q}_h)}
  \overset{\eqref{eq:HL.bound.TF}}\lesssim
  \$\, \tnorm{h}{(\uvec{\sigma}_h,\uvec{u}_h,\uvec{p}_h)}.
  \]
  Simplifying, the conclusion follows.
\end{proof}


\appendix

\section{Results on the trimmed finite element sequence on tetrahedral meshes}
\label{sec:simplicial.de-rham}

In this section we provide the explicit expression of the polynomial basis functions used in Section \ref{sec:proof.poincaré}.
These bases can be easily described on a reference element (see \cite{Arnold.Logg:14}).
However, in order the compute their norms, we need to know their expression on the physical element. 
The transformation from the reference to the physical element is given by the pullback of the mapping between the two.

We consider a simplex $S$ with 
  vertices $V_0$, $V_1$, $V_2$, and $V_3$ ordered so that, denoting by $\bvec{x}_{V_i}$ the coordinate vector of $V_i$,
$\left\{\bvec{x}_{V_1} - \bvec{x}_{V_0},\, \bvec{x}_{V_2} - \bvec{x}_{V_0},\, \bvec{x}_{V_3} - \bvec{x}_{V_0}\right\}$ forms a direct basis of $\Real^3$.
  The basis for the local affine Lagrange space $\Poly[1]{\trimmed}\Lambda^0(S) \cong \Poly{1}(S)$ spanned by ``hat'' functions is 
\begin{equation} \label{eq:S.DR.0}
  \begin{aligned}
    \phi_0(\bvec{x}) \coloneq 
    \frac{\big[(\bvec{x}_{V_2} - \bvec{x}_{V_1}) \times (\bvec{x}_{V_3} - \bvec{x}_{V_1}) \big] \cdot (\bvec{x}_{V_3} - \bvec{x})}
         {\det\big(\bvec{x}_{V_2} - \bvec{x}_{V_1}, \bvec{x}_{V_3} - \bvec{x}_{V_1}, \bvec{x}_{V_3}-\bvec{x}_{V_0}\big)}
    &,\;
    \phi_1(\bvec{x}) \coloneq 
    \frac{\big[(\bvec{x}_{V_2} - \bvec{x}_{V_0}) \times (\bvec{x}_{V_3} - \bvec{x}_{V_0}) \big] \cdot (\bvec{x} - \bvec{x}_{V_0})}
         {\det\big(\bvec{x}_{V_2} - \bvec{x}_{V_0}, \bvec{x}_{V_3} - \bvec{x}_{V_0}, \bvec{x}_{V_1}-\bvec{x}_{V_0}\big)}
    ,\\
    \phi_2(\bvec{x}) \coloneq 
    \frac{\big[(\bvec{x}_{V_1} - \bvec{x}_{V_0}) \times (\bvec{x}_{V_3} - \bvec{x}_{V_0}) \big] \cdot (\bvec{x} - \bvec{x}_{V_0})}
         {\det\big(\bvec{x}_{V_1} - \bvec{x}_{V_0}, \bvec{x}_{V_3} - \bvec{x}_{V_0}, \bvec{x}_{V_2}-\bvec{x}_{V_0}\big)}
    &,\;
    \phi_3(\bvec{x}) \coloneq 
    \frac{\big[(\bvec{x}_{V_1} - \bvec{x}_{V_0}) \times (\bvec{x}_{V_2} - \bvec{x}_{V_0}) \big] \cdot (\bvec{x} - \bvec{x}_{V_0})}
         {\det\big(\bvec{x}_{V_1} - \bvec{x}_{V_0}, \bvec{x}_{V_2} - \bvec{x}_{V_0}, \bvec{x}_{V_3}-\bvec{x}_{V_0}\big)}.
  \end{aligned}
\end{equation}
The basis of the lowest-order local face Raviart--Thomas--N\'ed\'elec space $\Poly[1]{\trimmed}\Lambda^1(S) \cong \RT{1}(S)$
is 
\begin{equation} \label{eq:S.DR.1}
  \begin{aligned}
    \bvec{\phi}_{23}(\bvec{x}) \coloneq 
    \frac{(\bvec{x}_{V_1} - \bvec{x}_{V_0}) \times (\bvec{x} - \bvec{x}_{V_0})}
         {\det\big(\bvec{x}_{V_1} - \bvec{x}_{V_0}, \bvec{x}_{V_2} - \bvec{x}_{V_0}, \bvec{x}_{V_3}-\bvec{x}_{V_2}\big)}
    &,\;
    \bvec{\phi}_{13}(\bvec{x}) \coloneq 
    \frac{(\bvec{x}_{V_2} - \bvec{x}_{V_0}) \times (\bvec{x} - \bvec{x}_{V_0})}
         {\det\big(\bvec{x}_{V_2} - \bvec{x}_{V_0}, \bvec{x}_{V_1} - \bvec{x}_{V_0}, \bvec{x}_{V_3}-\bvec{x}_{V_1}\big)}
    ,\\
    \bvec{\phi}_{12}(\bvec{x}) \coloneq 
    \frac{(\bvec{x}_{V_3} - \bvec{x}_{V_0}) \times (\bvec{x} - \bvec{x}_{V_0})}
         {\det\big(\bvec{x}_{V_3} - \bvec{x}_{V_0}, \bvec{x}_{V_1} - \bvec{x}_{V_0}, \bvec{x}_{V_2}-\bvec{x}_{V_1}\big)}
    &,\;
    \bvec{\phi}_{03}(\bvec{x}) \coloneq 
    \frac{(\bvec{x}_{V_2} - \bvec{x}_{V_1}) \times (\bvec{x} - \bvec{x}_{V_1})}
         {\det\big(\bvec{x}_{V_2} - \bvec{x}_{V_1}, \bvec{x}_{V_0} - \bvec{x}_{V_1}, \bvec{x}_{V_3}-\bvec{x}_{V_0}\big)}
    ,\\
    \bvec{\phi}_{02}(\bvec{x}) \coloneq 
    \frac{(\bvec{x}_{V_3} - \bvec{x}_{V_1}) \times (\bvec{x} - \bvec{x}_{V_1})}
         {\det\big(\bvec{x}_{V_3} - \bvec{x}_{V_1}, \bvec{x}_{V_0} - \bvec{x}_{V_1}, \bvec{x}_{V_2}-\bvec{x}_{V_0}\big)}
    &,\;
    \bvec{\phi}_{01}(\bvec{x}) \coloneq 
    \frac{(\bvec{x}_{V_3} - \bvec{x}_{V_2}) \times (\bvec{x} - \bvec{x}_{V_2})}
         {\det\big(\bvec{x}_{V_3} - \bvec{x}_{V_2}, \bvec{x}_{V_0} - \bvec{x}_{V_2}, \bvec{x}_{V_1}-\bvec{x}_{V_0}\big)},
  \end{aligned}
\end{equation}
  where the function $\bvec{\phi}_{ij}$ is associated to the edge $E_{ij}$ with vertices $V_i$ and $V_j$.
  The basis of the lowest-order local edge N\'ed\'elec space $\Poly[1]{\trimmed}\Lambda^2(S) \cong \NE{1}(S)$ is
\begin{equation} \label{eq:S.DR.2}
  \begin{aligned}
    \bvec{\phi}_{123}(\bvec{x}) \coloneq 
    \frac{2(\bvec{x} - \bvec{x}_{V_0})}
         {\det\big(\bvec{x}_{V_2} - \bvec{x}_{V_1}, \bvec{x}_{V_3} - \bvec{x}_{V_1}, \bvec{x}_{V_1}-\bvec{x}_{V_0}\big)}
    &,\;
    \bvec{\phi}_{023}(\bvec{x}) \coloneq 
    \frac{2(\bvec{x} - \bvec{x}_{V_1})}
         {\det\big(\bvec{x}_{V_2} - \bvec{x}_{V_0}, \bvec{x}_{V_3} - \bvec{x}_{V_0}, \bvec{x}_{V_1}-\bvec{x}_{V_0}\big)}
    ,\\
    \bvec{\phi}_{013}(\bvec{x}) \coloneq 
    \frac{2(\bvec{x} - \bvec{x}_{V_2})}
         {\det\big(\bvec{x}_{V_1} - \bvec{x}_{V_0}, \bvec{x}_{V_3} - \bvec{x}_{V_0}, \bvec{x}_{V_2}-\bvec{x}_{V_0}\big)}
    &,\;
    \bvec{\phi}_{012}(\bvec{x}) \coloneq 
    \frac{2(\bvec{x} - \bvec{x}_{V_3})}
         {\det\big(\bvec{x}_{V_1} - \bvec{x}_{V_0}, \bvec{x}_{V_2} - \bvec{x}_{V_0}, \bvec{x}_{V_3}-\bvec{x}_{V_0}\big)},
  \end{aligned}
\end{equation}
  with function $\bvec{\phi}_{ijk}$ associated to the simplicial face $F_{ijk}$ with vertices $V_i$, $V_j$, and $V_k$.
Finally, the basis of $\Poly[1]{\trimmed}\Lambda^3(S) \cong \Poly{0}(S)$ is 
\begin{equation} \label{eq:S.DR.3}
    \phi_{0123}(\bvec{x}) \coloneq 
    \frac{6}
         {\det\big(\bvec{x}_{V_1} - \bvec{x}_{V_0}, \bvec{x}_{V_2} - \bvec{x}_{V_0}, \bvec{x}_{V_3}-\bvec{x}_{V_0}\big)}
\end{equation}

\begin{lemma}[Dual basis]
  The following identities hold:
  \begin{subequations}
    \begin{alignat}{4}
      \phi_i(\bvec{x}_{i'}) &= \delta^i_{i'}
      &\qquad& \forall i, i' \in \lbrace 0, 1, 2, 3 \rbrace \label{eq:W.dual.0} \\
      \int_{E_{(ij)'}} \bvec{\phi}_{ij} \cdot \bvec{t}_E &= \delta^{ij}_{(ij)'} 
      &\qquad& \forall (ij), (ij)' \in \lbrace 23, 13, 12, 03, 02, 01 \rbrace \label{eq:W.dual.1}\\
      \int_{F_{(ijk)'}} \bvec{\phi}_{ijk} \cdot \bvec{n}_F &= \delta^{ijk}_{(ijk)'} 
      &\qquad&
      \forall (ijk),(ijk)' \in \lbrace 123, 023, 013, 012 \rbrace,\label{eq:W.dual.2}
    \end{alignat}
  \end{subequations}
  where $\delta_a^b = 1$ if $a = b$, $\delta_a^b = 0$ otherwise.
\end{lemma}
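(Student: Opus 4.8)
The plan is to verify the three identities by direct computation from the explicit expressions \eqref{eq:S.DR.0}, \eqref{eq:S.DR.1} and \eqref{eq:S.DR.2}, using repeatedly the triple-product identity $(\bvec{a}\times\bvec{b})\cdot\bvec{c}=\det(\bvec{a},\bvec{b},\bvec{c})$ together with the elementary facts that $\bvec{a}\times\bvec{b}$ is orthogonal to both $\bvec{a}$ and $\bvec{b}$ and that a $3\times 3$ determinant vanishes as soon as two of its columns are proportional. The signs in the denominators of \eqref{eq:S.DR.0}--\eqref{eq:S.DR.2} are chosen precisely so that, with the consistent edge tangents $\bvec{t}_E$ and face normals $\bvec{n}_F$ fixed on $S$, the diagonal entries come out equal to $+1$ rather than $\pm1$.

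\emph{Identity \eqref{eq:W.dual.0}.} Each $\phi_i$ is affine, hence determined by its four vertex values; in fact the computation identifies $\phi_i$ with the $i$th barycentric coordinate of $S$. For $i'\neq i$ the numerator of $\phi_i(\bvec{x}_{V_{i'}})$ vanishes: for $i\geq1$ because it carries the factor $\bvec{x}-\bvec{x}_{V_0}$, which is $\bvec{0}$ at $\bvec{x}_{V_0}$, or because the triple product then has two proportional arguments; for $i=0$ because at each of $\bvec{x}_{V_1},\bvec{x}_{V_2},\bvec{x}_{V_3}$ the vector $\bvec{x}_{V_3}-\bvec{x}$ is a linear combination of $\bvec{x}_{V_2}-\bvec{x}_{V_1}$ and $\bvec{x}_{V_3}-\bvec{x}_{V_1}$, hence orthogonal to their cross product. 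For $i'=i$ the triple-product identity turns the numerator into exactly the determinant in the denominator, so $\phi_i(\bvec{x}_{V_i})=1$.

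\emph{Identities \eqref{eq:W.dual.1} and \eqref{eq:W.dual.2}.} For the first, note that each $\bvec{\phi}_{ij}$ has the form $c_{ij}\,\bvec{a}_{ij}\times(\bvec{x}-\bvec{x}_{V_l})$, where $E_{lm}$ is the edge of $S$ opposite to $E_{ij}$ (so that $\{V_l,V_m\}=\{V_0,\dots,V_3\}\setminus\{V_i,V_j\}$) and $\bvec{a}_{ij}=\bvec{x}_{V_m}-\bvec{x}_{V_l}$. If $E\neq E_{ij}$, then along $E$ the map $\bvec{x}\mapsto\bvec{x}-\bvec{x}_{V_l}$ is affine with constant and linear coefficients lying, up to sign, in $\{\bvec{a}_{ij},\bvec{t}_E\}$ -- this is exactly where the disjointness of $\{V_i,V_j\}$ and $\{V_l,V_m\}$ is used -- so $\bvec{a}_{ij}\times(\bvec{x}-\bvec{x}_{V_l})$ is, along $E$, a scalar multiple of $\bvec{a}_{ij}\times\bvec{t}_E$ and hence orthogonal to $\bvec{t}_E$, whence $\int_E\bvec{\phi}_{ij}\cdot\bvec{t}_E=0$. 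On $E=E_{ij}$, inserting the affine parametrisation $\bvec{x}=\bvec{x}_{V_i}+s(\bvec{x}_{V_j}-\bvec{x}_{V_i})$ and discarding the contribution that is orthogonal to $\bvec{t}_E$ leaves a constant integrand, and integrating it against $\bvec{t}_E$ (so against the edge vector $\bvec{x}_{V_j}-\bvec{x}_{V_i}$) produces $c_{ij}\,\det(\bvec{a}_{ij},\bvec{x}_{V_i}-\bvec{x}_{V_l},\bvec{x}_{V_j}-\bvec{x}_{V_i})$, which equals $1$ by the choice of $c_{ij}$. For \eqref{eq:W.dual.2}, one has $\bvec{\phi}_{ijk}=2c_{ijk}(\bvec{x}-\bvec{x}_{V_l})$ with $V_l$ the vertex of $S$ not among $V_i,V_j,V_k$; any face $F\neq F_{ijk}$ contains $V_l$, so $\bvec{x}-\bvec{x}_{V_l}$ is tangent to $F$ and the flux vanishes, while on $F_{ijk}$ the scalar $(\bvec{x}-\bvec{x}_{V_l})\cdot\bvec{n}_F$ is constant, so the flux equals $2c_{ijk}\,[(\bvec{x}_{V_i}-\bvec{x}_{V_l})\cdot\bvec{n}_F]\,|F_{ijk}|$; writing $|F_{ijk}|\,\bvec{n}_F=\pm\tfrac12(\bvec{x}_{V_j}-\bvec{x}_{V_i})\times(\bvec{x}_{V_k}-\bvec{x}_{V_i})$ and applying the triple-product identity gives $c_{ijk}$ times a determinant which, after an even (cyclic) permutation of its columns, is the one in the denominator of \eqref{eq:S.DR.2}, and equals $1$.

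\emph{Main obstacle.} There is no conceptual difficulty -- the content is linear algebra on the simplex -- but the argument requires fixing explicitly the edge tangents $\bvec{t}_E$ and face normals $\bvec{n}_F$ compatibly with the orderings built into \eqref{eq:S.DR.0}--\eqref{eq:S.DR.2}, and tracking the parity of the column permutations of the $3\times 3$ determinants, to be sure the diagonal values are $+1$ and not merely $\pm1$. An alternative that sidesteps this is to observe that \eqref{eq:S.DR.0}--\eqref{eq:S.DR.3} are the images, under the affine map from a reference simplex to $S$, of the reference Whitney forms, and that point evaluations, edge circulations and face fluxes are preserved by the associated pullback, so the identities reduce to their well-known reference-element counterparts; the explicit physical-element expressions above are however what is actually needed in Section~\ref{sec:proof.poincaré} to evaluate the $L^2$-norms of these basis functions.
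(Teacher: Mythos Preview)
Your proof is correct and follows essentially the same approach as the paper: direct verification from the explicit formulas \eqref{eq:S.DR.0}--\eqref{eq:S.DR.2} via the triple-product identity and the orthogonality of the cross product. The only difference is organisational: the paper fixes one representative per family (e.g.\ $\bvec{\phi}_{23}$, $\bvec{\phi}_{123}$), parametrises the edge/face explicitly, and reads off the cancellation and the diagonal value, whereas you first isolate the structural form $\bvec{a}_{ij}\times(\bvec{x}-\bvec{x}_{V_l})$ (resp.\ $\bvec{x}-\bvec{x}_{V_l}$) and argue the off-diagonal vanishing uniformly; both routes land on the same determinant for the diagonal entry.
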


\begin{proof}
  The proof of \eqref{eq:W.dual.0} readily follows from the orthogonality of the cross product.

  Let us check \eqref{eq:W.dual.1} for $\bvec{\phi}_{23}$, the other being similar.
  For $(ij) \in \lbrace 23, 13, 12, 03, 02, 01 \rbrace$, we have
  \[
    \int_{E_{ij}} \bvec{\phi}_{23} \cdot \bvec{t}_E 
    = \int_{t=0}^1 \frac{
      \cancel{t [(\bvec{x}_{V_1} - \bvec{x}_{V_0}) \times (\bvec{x}_{V_j} - \bvec{x}_{V_i}) ] \cdot (\bvec{x}_{V_j} - \bvec{x}_{V_i})}
              + [(\bvec{x}_{V_1} - \bvec{x}_{V_0}) \times (\bvec{x}_{V_i} - \bvec{x}_{V_0})] \cdot (\bvec{x}_{V_j} - \bvec{x}_{V_i}) }
             {\det\big(\bvec{x}_{V_1} - \bvec{x}_{V_0}, \bvec{x}_{V_2} - \bvec{x}_{V_0}, \bvec{x}_{V_3}-\bvec{x}_{V_2}\big)}.
  \]
  If $i = 0$ or $i = 1$, then $(\bvec{x}_{V_1} - \bvec{x}_{V_0}) \times (\bvec{x}_{V_i} - \bvec{x}_{V_0}) = 0$.
  If $i = 2$ and $j = 3$, we can use the vector triple product to write
  $(\bvec{x}_{V_1} - \bvec{x}_{V_0}) \times (\bvec{x}_{V_2} - \bvec{x}_{V_0}) \cdot(\bvec{x}_{V_3} - \bvec{x}_{V_2}) = \det\big(\bvec{x}_{V_1} - \bvec{x}_{V_0}, \bvec{x}_{V_2} - \bvec{x}_{V_0}, \bvec{x}_{V_3}-\bvec{x}_{V_2}\big)$, 
  so that $\int_{E_{23}} \bvec{\phi}_{23} \cdot \bvec{t}_E = 1$.

  Finally, let us check \eqref{eq:W.dual.2} for $\bvec{\phi}_{123}$.
  Using the change of variable induced by $\bvec{\psi}:
  (\lambda_j,\lambda_k) \mapsto 
  \lambda_j (\bvec{x}_{V_j} - \bvec{x}_{V_i}) + 
  \lambda_k (\bvec{x}_{V_k} - \bvec{x}_{V_i}) + \bvec{x}_{V_i}$, %
    along with the fact that $\normal_F = \frac{(\bvec{x}_{V_j} - \bvec{x}_{V_i})\times (\bvec{x}_{V_k} - \bvec{x}_{V_i})}{2 |F_{ijk}|}$, we have
  \begin{equation*}
    \begin{aligned}
      &\int_{F_{ijk}} \bvec{\phi}_{123} \cdot \bvec{n}_F
      \\
      &\quad
      =
      \int_{\lambda_j = 0}^1 \int_{\lambda_k = 0}^{1 - \lambda_2} 
      2\frac{
        (\bvec{x}_{V_i} - \bvec{x}_{V_0}) + \lambda_2 (\bvec{x}_{V_j} - \bvec{x}_{V_i}) +  \lambda_3 (\bvec{x}_{V_3} - \bvec{x}_{V_i})
      }{
        \det\big(\bvec{x}_{V_2} - \bvec{x}_{V_1}, \bvec{x}_{V_3} - \bvec{x}_{V_1}, \bvec{x}_{V_1}-\bvec{x}_{V_0}\big)
      }
      \cdot \big[(\bvec{x}_{V_j} - \bvec{x}_{V_i}) \times (\bvec{x}_{V_k} - \bvec{x}_{V_i})\big]
      \\
      &\quad=
      2 \int_{\lambda_j = 0}^1 \int_{\lambda_k = 0}^{1 - \lambda_2} 
      \frac{
        (\bvec{x}_{V_i} - \bvec{x}_{V_0}) \cdot \big[(\bvec{x}_{V_j} - \bvec{x}_{V_i}) \times (\bvec{x}_{V_k} - \bvec{x}_{V_i})\big]
      }{
        \det\big(\bvec{x}_{V_2} - \bvec{x}_{V_1}, \bvec{x}_{V_3} - \bvec{x}_{V_1}, \bvec{x}_{V_1}-\bvec{x}_{V_0}\big)
      }
      \\
      &\quad=
      \frac{
        \det\big(\bvec{x}_{V_j} - \bvec{x}_{V_i}, \bvec{x}_{V_k} - \bvec{x}_{V_i},\bvec{x}_{V_i} - \bvec{x}_{V_0}\big)
      }{
        \det\big(\bvec{x}_{V_2} - \bvec{x}_{V_1}, \bvec{x}_{V_3} - \bvec{x}_{V_1}, \bvec{x}_{V_1}-\bvec{x}_{V_0}\big)
      }
      = \delta_{123}^{ijk}.\qedhere
    \end{aligned}
  \end{equation*}
\end{proof}

\begin{lemma}[Norm of the basis function]
  The functions given by \eqref{eq:S.DR.0}-\eqref{eq:S.DR.3} have the following $L^2$-norms:
  For all $i \in \lbrace 0, 1, 2, 3 \rbrace$,
  all $(ij) \in \lbrace 23, 13, 12, 03, 02, 01 \rbrace$,
  and all $(ijk) \in \lbrace 123, 023, 013, 012 \rbrace$,
  \begin{align} \label{eq:W.norm.0}
    \norm{L^2(S)}{\phi_i}^2
    &= \frac{\vert S \vert}{10} \simeq h_S^3,
    \\ \label{eq:W.norm.1}
    \norm{\bvec{L}^2(S;\Real^3)}{\bvec{\phi}_{ij}}^2
    &= \frac{
      \vert F_{012} \vert^2 + 
      \vert F_{013} \vert^2
      + c_{01}\vert F_{012} \vert\vert F_{013} \vert
    }{
      90 \vert S \vert
    }\simeq h_S,
    \\ \label{eq:W.norm.2}
    \norm{\bvec{L}^2(S;\Real^3)}{\bvec{\phi}_{ijk}}^2
    &=
    \frac{\vert E_{li} \vert^2 + \vert E_{lj} \vert^2 + \vert E_{lk} \vert^2 
      + 2 \vert F_{lij} \vert + 2 \vert F_{lik} \vert + 2 \vert F_{ljk}\vert}{180 \vert S \vert}\simeq h_S^{-1},
    \\ \label{eq:W.norm.3}
    \norm{L^2(S)}{\phi_{0123}}^2
    &= \frac{1}{\vert S \vert} \simeq h_S^{-3}, 
  \end{align}
  where, for $\lbrace i,j,k,l \rbrace = \lbrace 0, 1, 2, 3 \rbrace$, 
  $c_{kl} = \bvec{n}_{kli} \cdot \bvec{n}_{klj}$ is the dihedral angle associated to the edge $E_{kl}$.
\end{lemma}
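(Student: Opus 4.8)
The plan is to exploit the fact that every function in \eqref{eq:S.DR.0}--\eqref{eq:S.DR.3} is a polynomial of degree at most one on $S$ (and $\phi_{0123}$ is constant), so that any product of two of them has degree at most two and can be integrated exactly. The single analytic ingredient needed is the barycentric quadrature formula on a simplex,
\[
\int_S \lambda_0^{a_0}\lambda_1^{a_1}\lambda_2^{a_2}\lambda_3^{a_3}
= \frac{a_0!\,a_1!\,a_2!\,a_3!\,3!}{(a_0+a_1+a_2+a_3+3)!}\,\vert S\vert ,
\]
which specialises to $\int_S 1 = \vert S\vert$, $\int_S\lambda_i = \tfrac{\vert S\vert}{4}$, $\int_S\lambda_i^2 = \tfrac{\vert S\vert}{10}$, and $\int_S\lambda_i\lambda_j = \tfrac{\vert S\vert}{20}$ for $i\neq j$. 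For each basis function I would first re-express it, or rather its squared modulus, through the barycentric coordinates $\lambda_m$ and their constant gradients $\GRAD\lambda_m$, integrate term by term with the identities above, and finally rewrite the resulting algebraic expression in terms of the geometric quantities ($\vert S\vert$, face areas $\vert F\vert$, edge lengths $\vert E\vert$, dihedral cosines $c_{kl}$) using standard simplex identities; the equivalences $\simeq h_S^{\bullet}$ then follow from shape-regularity of the submesh.

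The scalar identities \eqref{eq:W.norm.0} and \eqref{eq:W.norm.3} are essentially immediate. A short computation with Cramer's rule / the scalar triple product identifies the functions of \eqref{eq:S.DR.0} with the barycentric coordinates, $\phi_i = \lambda_i$, so that $\norm{L^2(S)}{\phi_i}^2 = \int_S\lambda_i^2 = \tfrac{\vert S\vert}{10}$; moreover the vertex orientation convention gives $\det(\bvec{x}_{V_1}-\bvec{x}_{V_0},\bvec{x}_{V_2}-\bvec{x}_{V_0},\bvec{x}_{V_3}-\bvec{x}_{V_0}) = 6\vert S\vert$, hence $\phi_{0123}\equiv \vert S\vert^{-1}$ and $\norm{L^2(S)}{\phi_{0123}}^2 = \vert S\vert\cdot\vert S\vert^{-2} = \vert S\vert^{-1}$.

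For the edge functions I would check that $\bvec{\phi}_{ij}$ is the Whitney $1$-form $\lambda_i\GRAD\lambda_j - \lambda_j\GRAD\lambda_i$ (a direct computation from the definition, using $\bvec{x}-\bvec{x}_{V_0} = \sum_m\lambda_m(\bvec{x}_{V_m}-\bvec{x}_{V_0})$), and then expand
\[
\int_S\vert\bvec{\phi}_{ij}\vert^2
= \int_S\bigl(\lambda_i^2\vert\GRAD\lambda_j\vert^2 - 2\lambda_i\lambda_j\,\GRAD\lambda_i\cdot\GRAD\lambda_j + \lambda_j^2\vert\GRAD\lambda_i\vert^2\bigr)
= \tfrac{\vert S\vert}{10}\bigl(\vert\GRAD\lambda_i\vert^2 + \vert\GRAD\lambda_j\vert^2 - \GRAD\lambda_i\cdot\GRAD\lambda_j\bigr).
\]
The closed form \eqref{eq:W.norm.1} then comes from $\vert\GRAD\lambda_m\vert = \vert F_m\vert/(3\vert S\vert)$ (with $F_m$ the face of $S$ opposite $V_m$) and $\GRAD\lambda_m\cdot\GRAD\lambda_n = \pm\,c_{kl}\,\vert F_m\vert\vert F_n\vert/(9\vert S\vert^2)$, where $E_{kl}$ is the edge shared by $F_m$ and $F_n$. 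For the face functions, \eqref{eq:S.DR.2} already presents $\bvec{\phi}_{ijk}$ as $2(\bvec{x}-\bvec{x}_{V_l})/(6\vert S\vert)$ with $V_l$ the remaining vertex, so that $\int_S\vert\bvec{\phi}_{ijk}\vert^2 = \tfrac{1}{9\vert S\vert^2}\int_S\vert\bvec{x}-\bvec{x}_{V_l}\vert^2$; inserting $\bvec{x}-\bvec{x}_{V_l} = \sum_{m\neq l}\lambda_m(\bvec{x}_{V_m}-\bvec{x}_{V_l})$ reduces this to a linear combination of the $\int_S\lambda_m\lambda_n$, leaving only the task of expressing the outcome through the $\vert E_{lm}\vert$ and the scalar products $(\bvec{x}_{V_m}-\bvec{x}_{V_l})\cdot(\bvec{x}_{V_n}-\bvec{x}_{V_l})$ that appear in \eqref{eq:W.norm.2}. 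Finally, the equivalences $\simeq h_S^3,\,h_S,\,h_S^{-1},\,h_S^{-3}$ are read off from these equalities using that the submesh is shape-regular — $\vert S\vert\simeq h_S^3$, $\vert F\vert\simeq h_S^2$, $\vert E\vert\simeq h_S$, dihedral angles bounded away from $0$ and $\pi$ — a property inherited from the regularity of $\Mh$, as recalled at the beginning of Section \ref{sec:proof.poincaré}.

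I do not expect a conceptual obstacle; the estimates above are elementary. The genuine work, and the only place where mistakes can creep in, is the bookkeeping: tracking the vertex/edge/face labels, fixing the signs in the identities for $\GRAD\lambda_m\cdot\GRAD\lambda_n$ (equivalently, the orientation convention for the dihedral cosines $c_{kl}$), and carrying the numerical constants produced by the Whitney normalisation and by the quadrature formula. Matching the $\GRAD\lambda_m\cdot\GRAD\lambda_n$ and $(\bvec{x}_{V_m}-\bvec{x}_{V_l})\cdot(\bvec{x}_{V_n}-\bvec{x}_{V_l})$ terms to the face-area and dihedral expressions in \eqref{eq:W.norm.1}--\eqref{eq:W.norm.2} is where most care is needed; once the equalities are established, the norm equivalences follow immediately.
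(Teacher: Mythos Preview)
Your approach is correct and essentially the same as the paper's: both rewrite the basis functions through the barycentric coordinates $\lambda_m$ (the paper via an explicit change of variable to the reference simplex, you via the Whitney-form identification and the barycentric quadrature formula, which are two ways of saying the same thing) and then integrate monomials in the $\lambda_m$. The only cosmetic difference is that the paper carries out the computation for one representative function per family directly from the explicit expressions \eqref{eq:S.DR.0}--\eqref{eq:S.DR.3}, whereas you first pass through the standard Whitney representations $\phi_i=\lambda_i$, $\bvec{\phi}_{ij}=\lambda_i\GRAD\lambda_j-\lambda_j\GRAD\lambda_i$, etc.; your own caveat about sign and labelling bookkeeping is apt but does not affect the strategy.
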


\begin{proof}
  We will only show the computation for one function of each space, the others being similar.
  In order to integrate over the simplex S, we consider the change of variable 
  induced by $\bvec{\psi}: 
  (\lambda_1,\lambda_2,\lambda_3) \mapsto 
  \lambda_1 \bvec{x}_{V_1} + 
  \lambda_2 \bvec{x}_{V_2} + 
  \lambda_3 \bvec{x}_{V_3} + (1 - \lambda_1 - \lambda_2 - \lambda_3) \bvec{x}_{V_0}$.
  Notice that $\vert \det D \bvec{\psi} \vert = 6 \vert S \vert$.

  Let us first consider the family given by \eqref{eq:S.DR.0}.
  Using the orthogonality of the cross product, 
  and the identity $(\bvec{a} \times \bvec{b}) \cdot \bvec{c} = \det(\bvec{a},\bvec{b},\bvec{c})$,
  we notice that
  \[
  \phi_3 (\bvec{\psi}) = \lambda_3 \frac{\big[(\bvec{x}_{V_1} - \bvec{x}_{V_0}) \times (\bvec{x}_{V_2} - \bvec{x}_{V_0}) \big] \cdot (\bvec{x}_{V_3} - \bvec{x}_{V_0})}
      {\det\big(\bvec{x}_{V_1} - \bvec{x}_{V_0}, \bvec{x}_{V_2} - \bvec{x}_{V_0}, \bvec{x}_{V_3}-\bvec{x}_{V_0}\big)}
      = \lambda_3 .
      \]
      Hence, we have
      \[
      \int_S \phi_3^2
      = \int_{\lambda_1 = 0}^1 \int_{\lambda_2 = 0}^{1 - \lambda_1} \int_{\lambda_3 = 0}^{1 - \lambda_1 - \lambda_2} 
      (\lambda_3)^2 6 \vert S \vert
      = \frac{\vert S \vert}{10}.
      \]

      Then, we proceed with the family \eqref{eq:S.DR.1}.
      We have
      \begin{align*}
        \bvec{\phi}_{23}(\bvec{\psi}) =& 
        \frac{\lambda_2 (\bvec{x}_{V_1} - \bvec{x}_{V_0}) \times (\bvec{x}_{V_2} - \bvec{x}_{V_0}) 
          + \lambda_3 (\bvec{x}_{V_1} - \bvec{x}_{V_0}) \times (\bvec{x}_{V_3} - \bvec{x}_{V_0})}
             {\det\big(\bvec{x}_{V_1} - \bvec{x}_{V_0}, \bvec{x}_{V_2} - \bvec{x}_{V_0}, \bvec{x}_{V_3}-\bvec{x}_{V_2}\big)} \\
             =& \frac{1}{6 \vert S \vert} \left( \lambda_2 2 \vert F_{012} \vert \bvec{n}_{012} + 
             \lambda_3 2 \vert F_{013} \vert \bvec{n}_{013} \right).
      \end{align*}
      Expanding the product, we obtain
      \[
      \begin{aligned}
        \int_S \bvec{\phi}_{23} \cdot \bvec{\phi}_{23} ={}&
        \int_{\lambda_1 = 0}^1 \int_{\lambda_2 = 0}^{1 - \lambda_1} \int_{\lambda_3 = 0}^{1 - \lambda_1 - \lambda_2} 
        \left(\frac{1}{3 \vert S \vert}\right)^2 \left( \lambda_2^2 \vert F_{012} \vert^2 + 
        \lambda_3^2 \vert F_{013} \vert^2 + 2 \lambda_2 \lambda_3 c_{01}\vert F_{012} \vert\vert F_{013} \vert \right)
        6 \vert S \vert \\
        ={}& \frac{2}{3 \vert S \vert} \frac{\vert F_{012} \vert^2 + 
          \vert F_{013} \vert^2 + c_{01}\vert F_{012} \vert\vert F_{013} \vert}{60}
      \end{aligned}
      \]

      Finally, we prove that \eqref{eq:W.norm.2} holds for $\bvec{\phi}_{123}$ given by \eqref{eq:S.DR.2}.
      We have
      \[
      \bvec{\phi}_{123}(\bvec{\psi}) =
      2 \frac{\lambda_1 (\bvec{x}_{V_1} - \bvec{x}_{V_0})+ 
        \lambda_2 (\bvec{x}_{V_2} - \bvec{x}_{V_0}) + 
        \lambda_3 (\bvec{x}_{V_3} - \bvec{x}_{V_0})}{\det\big(\bvec{x}_{V_2} - \bvec{x}_{V_1}, \bvec{x}_{V_3} - \bvec{x}_{V_1}, \bvec{x}_{V_1}-\bvec{x}_{V_0}\big)} .
      \]
      Noticing that $(\bvec{x}_{V_i} - \bvec{x}_{V_0}) \cdot (\bvec{x}_{V_j} - \bvec{x}_{V_0}) = 2 \vert F_{0ij} \vert$,
      we have
      \[
      \begin{aligned}
        \int_S \bvec{\phi}_{123}\cdot\bvec{\phi}_{123} ={}&
        \int_{\lambda_1 = 0}^1 \int_{\lambda_2 = 0}^{1 - \lambda_1} \int_{\lambda_3 = 0}^{1 - \lambda_1 - \lambda_2} 
        \left(\frac{2}{6\vert S \vert}\right)^2
        \big(\lambda_1^2 \vert E_{01} \vert^2 + \lambda_2^2 \vert E_{02} \vert^2 + \lambda_3^2 \vert E_{03} \vert^2  \\
        &\qquad\qquad  + 4 \lambda_1\lambda_2 \vert F_{012} \vert + 4 \lambda_1\lambda_3 \vert F_{013} \vert 
        + 4 \lambda_2\lambda_3 \vert F_{023} \vert\big)
        6 \vert S \vert \\
        =& \frac{1}{3\vert S \vert} 
        \frac{\vert E_{01} \vert^2 + \vert E_{02} \vert^2 + \vert E_{03} \vert^2 
          + 2 \vert F_{012} \vert + 2 \vert F_{013} \vert + 2 \vert F_{023}\vert}{60}
      \end{aligned}
      \]
\end{proof}

\begin{lemma}[Link with the differential operators] \label{lem:W.diff}
  For all face $F$ of $S$, we define $\wSF \in \lbrace-1,1\rbrace$ such that $\wSF \nF$ is outward pointing.
  Then, the followings identities hold;
  \begin{alignat}{4} \label{eq:W.diff.0}
    \GRAD \phi_i &= \sum_{j < i} \bvec{\phi}_{ji} - \sum_{j > i} \bvec{\phi}_{ij}
    &\qquad& \forall i \in \lbrace 0,1,2,3 \rbrace,
    \\ \label{eq:W.diff.1}
    \CURL \bvec{\phi}_{ij} &= \omega_{SF_{\hat{i}}} \omega_{F_{\hat{i}}E_{ij}} \bvec{\phi}_{\hat{i}} + 
    \omega_{SF_{\hat{j}}} \omega_{F_{\hat{j}}E_{ij}} \bvec{\phi}_{\hat{j}}
    &\qquad& \forall (ij) \in \lbrace 23, 13, 12, 03, 02, 01 \rbrace,
    \\ \label{eq:W.diff.2} 
    \DIV \bvec{\phi}_{ijk} &= \omega_{SF_{ijk}} \phi_{0123},
    &\qquad& \forall (ijk) \in \lbrace 123, 023, 013, 012 \rbrace, 
  \end{alignat}
  where $\hat{i}$ denotes the complementary of $i$ in $\lbrace 0,1,2,3 \rbrace$.
\end{lemma}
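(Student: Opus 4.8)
The plan is to use the fact that the functions in \eqref{eq:S.DR.0}--\eqref{eq:S.DR.3} are the classical lowest-order (Whitney) basis functions: applying $\GRAD$, $\CURL$, or $\DIV$ to a basis function of one of these spaces lands in the next space, so the left-hand sides of \eqref{eq:W.diff.0}--\eqref{eq:W.diff.2} are \emph{a priori} linear combinations of the listed basis functions, and the coefficients can then be identified one at a time by evaluating the degrees of freedom \eqref{eq:W.dual.0}--\eqref{eq:W.dual.2} of the previous lemma on them. (Alternatively one could differentiate the explicit expressions and match them term by term against the explicit target bases, as in the two preceding proofs; the degrees-of-freedom argument is the shorter of the two.)

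I would start with \eqref{eq:W.diff.0}. Since $\phi_i$ is affine, $\GRAD\phi_i$ is a constant vector field and therefore belongs to the span of the six functions in \eqref{eq:S.DR.1} (these span the space of vector fields $\bvec{a}+\bvec{b}\times\bvec{x}$, which contains the constants); write $\GRAD\phi_i=\sum_{(kl)}c_{kl}\bvec{\phi}_{kl}$. Testing against \eqref{eq:W.dual.1} and applying the fundamental theorem of calculus on $E_{kl}$ (oriented from $V_k$ to $V_l$) gives $c_{kl}=\int_{E_{kl}}\GRAD\phi_i\cdot\tangent_E=\phi_i(\bvec{x}_{V_l})-\phi_i(\bvec{x}_{V_k})=\delta^i_l-\delta^i_k$ by \eqref{eq:W.dual.0}. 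The surviving terms --- $+1$ on the edges at $V_i$ for which $V_i$ is the higher-labelled endpoint and $-1$ on the remaining ones at $V_i$ --- reproduce exactly \eqref{eq:W.diff.0}.

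The arguments for \eqref{eq:W.diff.1} and \eqref{eq:W.diff.2} have the same shape. Each $\bvec{\phi}_{ij}$ is of the form $\bvec{a}+\bvec{b}\times\bvec{x}$, so $\CURL\bvec{\phi}_{ij}$ is constant, hence equals $\sum_{(klm)}d_{klm}\bvec{\phi}_{klm}$; by \eqref{eq:W.dual.2} and Stokes' theorem, $d_{klm}=\int_{F_{klm}}\CURL\bvec{\phi}_{ij}\cdot\normal_F=\int_{\partial F_{klm}}\bvec{\phi}_{ij}\cdot\tangent$ with $\partial F_{klm}$ oriented consistently with $\normal_F$, and \eqref{eq:W.dual.1} makes this vanish unless $E_{ij}$ is one of the three edges of $F_{klm}$. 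Thus only the two faces of $S$ containing $E_{ij}$ contribute, each with a $\pm1$ coefficient equal to the product of the relevant relative orientations --- this is \eqref{eq:W.diff.1}. Similarly each $\bvec{\phi}_{ijk}$ is of the form $\bvec{a}+c\,\bvec{x}$, so $\DIV\bvec{\phi}_{ijk}$ is constant and equals $e\,\phi_{0123}$; integrating over $S$ and using the divergence theorem together with \eqref{eq:W.dual.2} and $\int_S\phi_{0123}=1$ gives $e=\sum_{F\in\FS}\omega_{SF}\int_F\bvec{\phi}_{ijk}\cdot\normal_F=\omega_{SF_{ijk}}$, i.e.\ \eqref{eq:W.diff.2}.

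The genuinely delicate part is the sign bookkeeping for \eqref{eq:W.diff.1}: one has to check that the $\pm1$ delivered by Stokes' theorem on each of the two incident faces coincides with the product $\omega_{SF}\,\omega_{FE_{ij}}$ in the statement, given the orientations hard-wired into the normalising determinants of \eqref{eq:S.DR.1}--\eqref{eq:S.DR.3}. I expect this to be where most of the work sits. Since the vertex ordering of $(V_0,V_1,V_2,V_3)$ is fixed (the basis is direct) and the whole construction is natural under the affine pullback already employed in this appendix, it is enough to settle the signs once on a reference configuration; alternatively one can grind through the finitely many edge/face incidences by hand. The matching of signs in \eqref{eq:W.diff.0} and \eqref{eq:W.diff.2} is immediate from the conventions.
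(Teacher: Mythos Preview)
Your approach is correct and genuinely different from the paper's. The paper proves all three identities by brute-force differentiation of the explicit formulas \eqref{eq:S.DR.0}--\eqref{eq:S.DR.3} and then algebraic matching against the target bases: for \eqref{eq:W.diff.2} it computes $\DIV\bvec x=3$ and reads off the sign of the normalising determinant; for \eqref{eq:W.diff.1} it uses the identity $\CURL(\bvec A\times\bvec B)=\VDIV(\bvec B\otimes\bvec A^\top-\bvec A\otimes\bvec B^\top)$ on the explicit $\bvec\phi_{ij}$ and manipulates determinants; for \eqref{eq:W.diff.0} it expands $\GRAD\phi_0$ as a cross product and splits it by hand into the three edge functions. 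Your argument instead exploits that the differential maps one trimmed space into the next, expands in the target basis, and extracts each coefficient via the dual degrees of freedom \eqref{eq:W.dual.0}--\eqref{eq:W.dual.2} combined with the fundamental theorem of calculus, Stokes, and the divergence theorem respectively. This is more conceptual, works uniformly for all indices at once (no ``the other cases are similar''), and makes transparent \emph{why} the coefficients are incidence signs --- it is essentially the cochain-map property of the Whitney interpolant. The paper's route, by contrast, never invokes the dual-basis lemma and is purely computational; what it buys is that the signs fall out of explicit determinant identities rather than from orientation conventions. You are right that the only real labour in your version is the sign check for \eqref{eq:W.diff.1}, where the Stokes orientation on $\partial F$ induced by $\normal_F$ must be reconciled with the $\omega_{SF}\,\omega_{FE}$ convention; this is a finite verification and your proposed reference-element or case-by-case check is adequate.
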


\begin{proof}
  First, let us prove \eqref{eq:W.diff.2}.
  Noticing that $\DIV \bvec{x} = 3$, it only remain to check that 
  \begin{equation} \label{eq:P.W.diff.2}
    \sign \det\big(\bvec{x}_{V_j} - \bvec{x}_{V_i}, \bvec{x}_{V_k} - \bvec{x}_{V_i}, \bvec{x}_{V_i}-\bvec{x}_{V_l}\big) = \omega_{SF_{ijk}}.
  \end{equation}
  This holds, since $(\bvec{x}_{V_j} - \bvec{x}_{V_i}) \times (\bvec{x}_{V_k} - \bvec{x}_{V_i}) = 
  \Vert \bvec{x}_{V_j} - \bvec{x}_{V_i}\Vert \Vert \bvec{x}_{V_k} - \bvec{x}_{V_i} \Vert \nF$,
  and $\bvec{x}_{V_i}-\bvec{x}_{V_l}$ is always outward pointing.

  Then, to prove \eqref{eq:W.diff.1}, we use the identity $\CURL (\bvec{A}\times \bvec{B}) = \VDIV (\bvec{B} \otimes \bvec{A}^\top - \bvec{A} \otimes \bvec{B}^\top)$ in \eqref{eq:S.DR.1} to write 
  \begin{equation} \label{eq:P.W.diff.1.0}
    \CURL \bvec{\phi}_{ij} = \frac{3 (\bvec{x}_{V_l} - \bvec{x}_{V_k}) - (\bvec{x}_{V_l} - \bvec{x}_{V_k})}
          {\det \big(\bvec{x}_{V_l} - \bvec{x}_{V_k}, \bvec{x}_{V_i} - \bvec{x}_{V_k}, \bvec{x}_{V_j}-\bvec{x}_{V_i}\big)},
  \end{equation}
  where $(kl)$ is such that $\lbrace i,j,k,l \rbrace = \lbrace 0,1,2,3 \rbrace$ and $k < l$.
  Noticing that $\bvec{x}_{V_l} - \bvec{x}_{V_k}$ is an inward pointing to the face $\hat{k}$,
  and $\bvec{x}_{V_i} - \bvec{x}_{V_k}$ lies outward pointing in the plane of $F_{\hat{k}}$, 
  we have 
  \begin{equation} \label{eq:P.W.diff.1.1}
    \det \big(\bvec{x}_{V_l} - \bvec{x}_{V_k}, \bvec{x}_{V_i} - \bvec{x}_{V_k}, \bvec{x}_{V_j}-\bvec{x}_{V_i}\big)
    = \frac{1}{6 \vert S \vert} \omega_{F_{\hat{k}}E_{ij}} 
    = -\frac{1}{6 \vert S \vert} \omega_{F_{\hat{l}}E_{ij}}.
  \end{equation}
  where we inserted $\bvec{x}_{V_k} - \bvec{x}_{V_i}$ in the second argument of the determinant to get the second inequality.
  Inserting $\bvec{x} - \bvec{x}$ in \eqref{eq:P.W.diff.1.0} and replacing the denominator according to \eqref{eq:P.W.diff.1.1}, 
  we obtain
  $\CURL \bvec{\phi}_{ij} = 6 \vert S \vert \left( \omega_{F_{\hat{k}}E_{ij}} 2 (\bvec{x} - \bvec{x}_{V_k})
  + \omega_{F_{\hat{l}}E_{ij}} 2 (\bvec{x} - \bvec{x}_{V_l}) \right)
  $
  We infer \eqref{eq:W.diff.1} recalling \eqref{eq:P.W.diff.2}.

  Finally, let us prove \eqref{eq:W.diff.0}.
  We only prove the equality for $\phi_0$, the other three being similar.
  By the assumption on the basis, we have 
  $\det \big(\bvec{x}_{V_2} - \bvec{x}_{V_1}, \bvec{x}_{V_3} - \bvec{x}_{V_1}, \bvec{x}_{V_3}-\bvec{x}_{V_0}\big) = 6 \vert S \vert$.
  Then, a direct computation shows that
  \begin{align*}
    \GRAD \phi_0 
    =& -\frac{(\bvec{x}_{V_2} - \bvec{x}_{V_1})\times(\bvec{x}_{V_3} - \bvec{x}_{V_1})}{6 \vert S \vert}\\
    =& -\frac{(\bvec{x}_{V_2} - \bvec{x})\times(\bvec{x}_{V_3} - \bvec{x}_{V_2})
      + (\bvec{x}_{V_2} - \bvec{x})\times(\bvec{x}_{V_2} - \bvec{x}_{V_1})
      + (\bvec{x} - \bvec{x}_{V_1})\times(\bvec{x}_{V_3} - \bvec{x}_{V_1})
    }{6 \vert S \vert}.
  \end{align*}
  Noticing that
  \[
  \begin{gathered}
    \bvec{\phi}_{03}(\bvec{x}) = \frac{(\bvec{x}_{V_2} - \bvec{x}_{V_1}) \times (\bvec{x} - \bvec{x}_{V_1})}
         {6 \vert S \vert}
         ,\quad
         \bvec{\phi}_{02}(\bvec{x}) = -\frac{(\bvec{x}_{V_3} - \bvec{x}_{V_1}) \times (\bvec{x} - \bvec{x}_{V_1})}
              {6 \vert S \vert}
              ,
              \\
              \bvec{\phi}_{01}(\bvec{x}) = \frac{(\bvec{x}_{V_3} - \bvec{x}_{V_2}) \times (\bvec{x} - \bvec{x}_{V_2})}
                   {6 \vert S \vert},
  \end{gathered}
  \]
  we infer that $\GRAD \phi_0 = - \bvec{\phi}_{01} - \bvec{\phi}_{03} - \bvec{\phi}_{02}$.
\end{proof}


\section*{Acknowledgements}
Daniele Di Pietro acknowledges the partial support of \emph{Agence Nationale de la Recherche} through the grant ``HIPOTHEC''.
Both authors acknowledge the partial support of \emph{Agence Nationale de la Recherche} through the grant ANR-16-IDEX-0006 ``RHAMNUS''.


\printbibliography

\end{document}